\newtheorem{Theorem}{Theorem}
\newtheorem{Proposition}{Proposition}
\newtheorem{Corollary}{Corollary}
\newtheorem{Lemma}{Lemma}
\numberwithin{equation}{section}
\newcommand{\rd}{\color{red}}
\newcommand{\R}{\mathbb{R}}
\newcommand{\Z}{\mathbb{Z}}
\newcommand{\N}{\mathbb{N}}
\renewcommand{\P}{\mathbb{P}}
\newcommand{\E}{\mathbb{E}}
\newcommand{\dd}{\mathrm{d}}
\newcommand{\ed}{\mathrm{e}}
\newcommand{\Const}{C}
\newcommand{\const}{c}
\newcommand{\8}{\infty}
\newcommand{\vart}{{\mathsf{t}}}
\newcommand{\varc}{{\mathsf{c}}}
\begin{document}

% TITLE
\title{Random walk driven by the simple exclusion process}

\author{
Fran\c cois Huveneers, 
Fran\c cois Simenhaus
}
\date{\today}

\maketitle 

\begin{abstract}
We prove a strong law of large numbers and an annealed invariance principle for a random walk in a one-dimensional dynamic random environment evolving as the simple exclusion process with jump parameter $\gamma$. 
First, we establish that if the asymptotic velocity of the walker is non-zero in the limiting case ``$\gamma = \infty$", where the environment gets fully refreshed between each step of the walker,
then, for $\gamma$ large enough, the walker still has a non-zero asymptotic velocity in the same direction. 
Second, we establish that if the walker is transient in the limiting case $\gamma = 0$, then, for $\gamma$ small enough but positive, the walker has a non-zero asymptotic velocity in the direction of the transience. 
These two limiting velocities can sometimes be of opposite sign. 
In all cases, we show that the fluctuations are normal. 
\end{abstract}

% SECTION
\section{Introduction}\label{Section: Introduction}
The question of the evolution of a random walk in a disordered environment has attracted a lot of attention in both the mathematical and the physical communities over the past few decades. 
The first studies were concerned with static random environments. 
In this set-up, anomalous slowdowns are expected in comparison with the homogeneous case, as the environment may create traps where the walker gets stuck for long times. 
This effect is particularly strong in one dimension, where it is by now well understood (see \cite{Zeitouni} for background). 
In dynamical random environments instead, the transition probabilities of the walker evolve with time too. 
If the environment has good space-time mixing properties, one expects the trapping phenomenon to disappear, and the walker to behave very much as if the medium was homogeneous. 
The study of this case has recently led to intense researches;
see for example \cite{Dolgopyat Keller Liverani,Redig Vollering} and references therein, as well as \cite{Avena} for an overview and further references. 

However, examples of dynamical environments with slow relaxation times occur naturally.  
Indeed, in the presence of a macroscopically conserved quantity, the environment may evolve diffusively. 
Time correlations then only decay as $t^{-d/2}$ in dimension $d$. 
{Especially for $d=1$, correlations decay so slowly that the results developed for fast mixing environments do not apply
(see e.g.\@ \cite{Avena den Hollander Redig,Bricmont_Kupiainen_09,Dolgopyat Keller Liverani,Rassoul,Redig Vollering})}.
{Slowly mixing} environments form an intermediate class of models, which is still far for being well understood at the present time \cite{Avena,Avena Thomann}. 

In this paper, we are interested in the asymptotic behavior as $t\to \8$ of $X_t/t$, 
where $X_t$ denotes the position at time $t$ of a walker driven by the one-dimensional simple exclusion process, at equilibrium with density $0 < \rho <1$.
The walker evolves in discrete time: 
if he sits on a particle at the moment of jumping, he moves to the right with probability $\alpha$ and to the left with probability $1-\alpha$ for some $0< \alpha < 1$, 
while, if he sits on a vacant site, these probabilities become respectively $\beta$ and $1 - \beta$ for some $0 < \beta < 1$ (see Figure \ref{figure: Walker}, as well as Section \ref{subsec:model} below for a more rigorous description). 
Of particular interest is the case $\alpha < \frac{1}{2} < \beta$ or vice versa, as we then see reappearing the possibility of trapping mechanisms. 

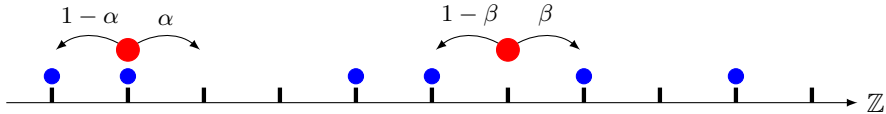
\begin{figure}[h!]
\begin{center}
\begin{tikzpicture}[scale=1]

\draw[->,>=latex] (-0.6,0) -- (10.6,0);

\draw (10.6,0) node[right] {$\Z$};

\draw [ultra thick] (0,0) -- (0,0.2);
\draw [ultra thick] (1,0) -- (1,0.2);
\draw [ultra thick] (2,0) -- (2,0.2);
\draw [ultra thick] (3,0) -- (3,0.2);
\draw [ultra thick] (4,0) -- (4,0.2);
\draw [ultra thick] (5,0) -- (5,0.2);
\draw [ultra thick] (6,0) -- (6,0.2);
\draw [ultra thick] (7,0) -- (7,0.2);
\draw [ultra thick] (8,0) -- (8,0.2);
\draw [ultra thick] (9,0) -- (9,0.2);
\draw [ultra thick] (10,0) -- (10,0.2);

\draw [blue,fill=blue] (0,0.35) circle (0.1);

\draw [->,>=latex] (1,0.7) to [bend left =40] (1.95,0.7);
\draw [->,>=latex] (1,0.7) to [bend right =40] (0.05,0.7);

\draw [blue,fill=blue] (1,0.35) circle (0.1);
\draw [red,fill=red] (1,0.7) circle (0.15);

\draw (1.5,0.9) node[above] {\small $\alpha$};
\draw (0.5,0.9) node[above] {\small $1-\alpha$};

\draw [blue,fill=blue] (4,0.35) circle (0.1);

\draw [blue,fill=blue] (5,0.35) circle (0.1);

\draw [->,>=latex] (6,0.7) to [bend left =40] (6.95,0.7);
\draw [->,>=latex] (6,0.7) to [bend right =40] (5.05,0.7);

\draw [red,fill=red] (6,0.7) circle (0.15);

\draw (6.5,0.9) node[above] {\small $\beta$};
\draw (5.5,0.9) node[above] {\small $1-\beta$};

\draw [blue,fill=blue] (7,0.35) circle (0.1);

\draw [blue,fill=blue] (9,0.35) circle (0.1);

\end{tikzpicture}
\end{center}
\caption{
\label{figure: Walker}
Transition probabilities for the walker (red) evolving on top of the particles of the simple exclusion process (blue). 
}
\end{figure}

Let $\gamma$ be the jump rate of the particles of the exclusion dynamics. 
Two regimes, depending on the value of $\gamma$, are considered in this article. 
We first deal with the fluid regime $\gamma \gg 1$. 
In the formal limit ``$\gamma=\infty$", the walker evolves as in an homogeneous medium: 
he jumps to the right with the homogenized probability $\tilde p = \rho \alpha + (1 - \rho) \beta$ and to the left with probability $1 - \tilde p$. 
We assume $\tilde p \ne \frac{1}{2}$, and say $\tilde p > \frac{1}{2}$ to fix things.
The walker drifts thus to the right in the limiting case ``$\gamma=\infty$" and the fluctuations are normal. 
Let us now take $1 \ll\gamma<\infty$. 
In the time interval left between each step of the walker, the exclusion process mixes the particles in boxes of size $\gamma^{1/2}$. 
Therefore, most of the time, the walk will behave in the same way as the limiting homogeneous walk.   
However, the walker will eventually enter some regions where the density of particles is anomalously high or small, and behave then differently. 
But regions of size $l$ with an anomalous density of particles (with respect to the equilibrium measure) are typically at distance $\ed^{c l}$ from the origin for some $c>0$, 
while only a time of order $l^2$ is needed for the dynamics to disaggregate them. 
Therefore, these regions do not act as efficient barriers to the evolution of the walker, which exhibits thus a positive asymptotic velocity. 
This is made precise in Theorem~\ref{Theorem: Drift to the right}, where it is also shown that fluctuations are normal.

%We next deal with the quasi-static regime $\gamma \ll 1$. 
%The dynamics is then no longer dominated by the homogenized probability $\tilde p$. 
%Instead of the condition $\tilde p \ne \frac{1}{2}$, we now assume that, in the limiting static case $\gamma = 0$, 
%the walk is transient, say to the right, see {\rd criterium} \eqref{static transience} below ({\rd see \cite{Zeitouni}  Chapter $2.1$ for the study of transience - recurrence criterium and the proof of the law of large numbers).} 
%Under this hypothesis, when $\gamma = 0$, it is known that $X_t$ {\rd behaves as} $t^\delta$ as $t\to\infty$ for some $0 < \delta \le 1$. 
%{\rd The sub-ballistic behavior corresponding to cases where $\delta < 1$ appears when $\rho \frac{1-\alpha}{\alpha}+(1-\rho)\frac{1-\beta}{\beta}> 1$ and is due to the fluctuations of the environment (see \cite {KKS} for precise results on this regime).}

We next deal with the quasi-static regime $\gamma \ll 1$. 
The dynamics is then no longer dominated by the homogenized probability $\tilde p$. 
Instead of the condition $\tilde p \ne \frac{1}{2}$, we now assume that, in the limiting static case $\gamma = 0$, 
the walk is transient, say to the right ({see the criterium \eqref{static transience} below, as well as Chapter 2.1 in \cite{Zeitouni}}). 
Under this hypothesis, when $\gamma = 0$, it is known that $X_t$ {behaves as} $t^\delta$ as $t\to\infty$ for some $0 < \delta \le 1$. 
{The sub-ballistic behavior corresponding to cases where $\delta < 1$ appears when $\rho \frac{1-\alpha}{\alpha}+(1-\rho)\frac{1-\beta}{\beta}> 1$ and is due to the fluctuations of the environment (see \cite {KKS} for precise results on this regime).}
%Under this hypothesis, when $\gamma = 0$, it is known that $X_t \sim t^\delta$ as $t\to\infty$ for some $0 < \delta \le 1$, 
%and that the sub-ballistic behavior observed in the cases where $\delta < 1$, is due to the fluctuations of the environment ({\color{blue}see KKS}).
Let us now take $\gamma > 0$. 
The environment does not evolve significantly over a time $t=\tau\gamma^{-1}$ for some small $\tau >0$. 
Therefore, taking $\gamma$ small enough, the walker will move to the right by an amount of order $t^\delta$ if started in a typical environment with respect to the Gibbs measure. 
%We take it however small enough so that, over a time $t=\tau\gamma^{-1}$ for some small $\tau >0$, during which the environment has not evolved significantly, 
%the walk moves to the right by an amount of order $t^\delta$, if started in an environment that looks locally typical with respect to the Gibbs measure. 
Thus, as long as the walk evolves in such configurations of the environment, it drifts to the right. 
Nevertheless, when entering a region where the density of particles is anomalous, its progression may be greatly slowed. 
However, since $\gamma$ is positive, we may just reuse the argument developed for large $\gamma$ and show that traps are irrelevant: 
large traps (of size $l$) disappear on much shorter time scales ($\gamma^{-1}l^2$) 
than the time needed for the walker to see a next trap of that size (a time at least of order $l^{K}$ for some $K\gg 1$, see Section \ref{Section: small gamma}). 
We conclude that the walker has a positive velocity to the right, as stated in Theorem \ref{Theorem: Drift to the left}.
Again, we show also that fluctuations are normal. 

%for some values $\rho$, $\alpha$ and $\beta$, the walk is transient to the {\rd right} in a static environment while the homogenized drift $2\tilde{p}-1$ is {\rd negative} {\rd (this is for example the case when $\rho=9/10$, $\alpha=1/4$ and $\beta=1-\epsilon$ with $\epsilon$ small enough). In this case the asymptotic velocity in the static environment is necessarily zero as the drift condition together with Jensen inequality implies that $\rho \frac{1-\alpha}{\alpha}+(1-\rho)\frac{1-\beta}{\beta}> 1$.}
%Therefore it holds that $v(0)=0$, then $v(\gamma) {\rd >} 0$ for small enough $\gamma {\rd <} 0$, and finally $v(\gamma) > 0$ for large enough $\gamma > 0$.

From these two results, we conclude that the value of the limiting velocity $v(\gamma)$ may change drastically between the large an small $\gamma$ regimes. 
Indeed, for some values $\rho$, $\alpha$ and $\beta$, the walk is transient to the {right} in a static environment while the homogenized drift $2\tilde{p}-1$ is negative
(take e.g.\@ $\rho=9/10$, $\alpha=1/4$ and $\beta=1-\epsilon$ with $\epsilon$ small enough in \eqref{static transience} and \eqref{drift condition} below).
In this case, the asymptotic velocity in a static environment is necessarily zero (Jensen's inequality implies $\rho \frac{1-\alpha}{\alpha}+(1-\rho)\frac{1-\beta}{\beta}> 1$).
%(the asymptotic velocity in the static environment is then necessarily zero). 
Therefore, it holds that $v(0)=0$, $v(\gamma) > 0$ for small enough $\gamma > 0$, and $v(\gamma) < 0$ for large enough $\gamma$.

We did not show that $v(\gamma)$ converges to $v(0)$ as $\gamma$ goes to $0$, nor to $2\tilde{p}-1$ as $\gamma$ goes to $\8$, but we expect both of these limits to hold.
While the latter could be shown using the techniques of the present article, at the cost of slightly more involved estimates (control on both excess and shortage of particles), 
some more work should be needed to check the continuity at $0$.  

Theorems \ref{Theorem: Drift to the right} and \ref{Theorem: Drift to the left} are shown in essentially the same way.
In a first step, by means of a multi-scale analysis, we show recursively that the walker can pass larger and larger traps, allowing for a set of initial environments of larger and larger measure.
%we allow for initial environments that become globally more and more typical, including thus the possibility for rare regions that look locally more and more atypical.
%for environments that are allowed to become locally more and more atypical.  
The introduction of renormalization techniques to study random walks in random environments goes back to \cite{Bricmont_Kupiainen_91} and \cite{Bricmont_Kupiainen_09}, from where our strategy is inspired. 
A similar method is used in \cite{Hilario et al} (see also \cite{den Hollander Kesten Sidoravicius} and \cite{dos_Santos} for a slightly different approach).
From this first step, one concludes that the walker drifts almost surely to the left or to the right. 

The law of large numbers and the invariance principle are deduced in a second step. 
As the environment evolves only on diffusive time scales, a ballistic walker discovers fresh randomness most of the time. 
From this observation, it is possible to build up a renewal structure, allowing to cut the full trajectory into pieces that are mutually independent.
This idea was first introduced in \cite{SZ} for the case of a static i.i.d.\@ environment, 
and further adapted by \cite{Comets Zeitouni} to deal with the case of static environments with good mixing properties.
It was exploited in \cite{Avena den Hollander Redig} to obtain a law of large numbers for dynamic environments with good mixing rates, and then in \cite{Avena dos Santos Vollering,Berard Ramirez,Hilario et al}
{for a one-dimensional diffusive environment}.
As this method is rather delicate and model dependent, we had to perform specific constructions and estimates (see Section \ref{Section: Proof of law of large numbers}). 

Let us now discuss some existing works that are directly related to our results. 
A rather comprehensive study of the model studied here was initiated in \cite{Avena Thomann}, 
where several conjectures, based on numerical computations and some heuristics, were presented.
%Partial mathematical results have been obtained in \cite{dos_Santos} and \cite{Avena dos Santos Vollering} {\color{blue}Comment more on this last reference}. 
Our Theorem \ref{Theorem: Drift to the left} answers negatively one of the ``key open questions" asked in (3.8) in \cite{Avena Thomann} %: Is $\gamma_1 (\rho) > 0 $?
(for $\beta=1-\alpha$ and $\rho \ne 1/2$, the velocity of the walker is non-zero for all $\gamma > 0$ small enough).
Moreover, two dimensional analogs of our model have been studied in the physics literature. 
In \cite{Basu Maes}, the differential mobility of a tagged particle driven by an external field is shown, by means of numerical computations, 
to undergo a transition from a quasi-static to a fluid regime as $\gamma$ is increased. 
These two regimes are somehow analogous to the ones described by our Theorem \ref{Theorem: Drift to the left} and Theorem \ref{Theorem: Drift to the right} respectively. 
In addition, the same model was studied in \cite{Benichou} as a way to probe the glassy transition in liquids, and the possibility of anomalous fluctuations was alluded.
Our results suggest that no anomalous fluctuation should be observed. 

Finally, in a recent work \cite{Hilario et al}, a law of large numbers and an invariance principle for the fluctuations of a walker were obtained in a set-up close to ours. 
The authors consider indeed a random walk driven by a set of non-interacting particles at equilibrium with density $0 < \rho < + \infty$.
The transition probability of the walker differs if he sits on a vacant site or on a site occupied by at least one particle.
Their results hold then for all $\rho$ large enough, assuming that the limiting velocity is non-zero in the limiting case ``$\rho=\infty$". 
This is thus a situation analogous to the one described by our Theorem \ref{Theorem: Drift to the right}, which proof is moreover based on a similar architecture as their.
Nevertheless, we stress that, even for Theorem \ref{Theorem: Drift to the right}, we developed our strategy independently, and that a closer look at the details shows that many steps cannot be simply taken over.

% SUBSECTION
\subsection{Model}\label{subsec:model}
An environment is a function $(\omega(t,x))_{t\ge 0,x\in \Z}$ with values in the interval $]0,1[$; we refer to $t\in \R_+$ as the (continuous) time, and to $x\in\Z$ as the position. 
Given such a function $\omega$ we define, for any space-time point $(n,x) \in \N \times \Z$, the Markovian (discrete time) law $P^{\omega}_{n,x}$ by
\begin{align}
&P^{\omega}_{n,x}(X_0=x)=1 \; \textrm{and, for all }\; k\geq 0 \; \textrm{and}\; z\in \Z,\\
&P^{\omega}_{n,x}(X_{k+1}=z+1|X_k=z)=\omega(n+k,z),\\
&P^{\omega}_{n,x}(X_{k+1}=z-1|X_k=z)=1-\omega(n+k,z).
\end{align}
Given the discrete time process $(X_n)_{n\in\N}$, we also define, with a slight abuse of notation, the continuous time process $(X_t)_{t\ge 0}:=(X_{\lfloor t \rfloor})_{t\ge 0}$ with $t\in \R_+$.

Consider the simple exclusion process on $\Z$ ,
\begin{equation*}
(\eta(t))_{t\ge 0} \; = \;  (\eta_{x}(t))_{t\ge 0,x\in\Z} \; \in \; \{ 0,1\}^\Z,
\end{equation*}
defined by its generator 
\begin{equation}\label{generator simple exclusion}
\mathcal L f \; = \; \gamma \sum_{x\in \Z} \big( f\circ \sigma_{x,x+1} - f \big),
\end{equation}
where $\gamma>0$ is the jump rate of the particles (the intensity of the process), and where $\sigma_{x,y}$ is defined for any $x,y\in \Z$ by
\begin{equation*}
\sigma_{x,y} (\eta) \; = \; (\dots , \eta_y, \dots , \eta_x, \dots ) \qquad \text{if} \qquad \eta \; =  \; (\dots , \eta_x, \dots , \eta_y, \dots).
\end{equation*}
If $\eta_x(t)=1$, the site $x$ is said to be occupied by a particle at time $t$, while it is said to be vacant if $\eta_x (t)=0$.  
For any $0 \le \rho \le 1$, the probability measure $\pi_\rho=(\rho\delta_1+(1-\rho)\delta_0)^{\otimes \Z}$  is invariant for the simple exclusion process. 
Remark that for any $x$, $\pi_\rho(\eta_x=1)=\rho$. We denote by $\P_\rho$ the law of the process with initial condition distributed according to $\pi_\rho$. 
 %The parameter $\rho$ does not appear in the notation $\P_0$ as it will be fixed in the following. 

Let $0\leq \alpha, \beta\leq 1$. Given a realization of $\eta$ we define an environment $\omega$ as a function of $\eta$ by
\begin{align}\label{defomega}
\omega(\eta) (t,x)= 
\begin{cases} 
\alpha &\mbox{if } \eta_x(t)=1, \\
\beta & \mbox{if } \eta_x(t)=0 .
\end{cases}
\end{align}
We define $\P$, a law on the space of environments, as the push-forward of $\P_\rho$ through this function. 
To fix the ideas, we will from now on assume that the drift corresponding to empty sites is larger than the drift corresponding to occupied sites:
\begin{equation*}
0\leq \alpha < \beta \leq 1.
\end{equation*}
We define for any space-time point $(n,x)\in \N\times \Z$ the {annealed} law by
\begin{equation}
P_{n,x}=\P\times P^{\omega}_{n,x}.
\end{equation}
Remark that the law of $(X_k-x)_{k\geq 0}$ under $P_{n,x}$ is the same as the law of $(X_k)_{k\geq 0}$ under $P_{0,0}$, that we will often denote simply by $P_0$. 

%It is interesting to define the environment in this way as it will be useful to make some link with another construction in Section \ref{Section: Proof of law of large numbers}. 
%However in  Sections \ref{Section: Dissipation of traps}-\ref{sec:left} we will often work simultaneously with some events relative to the walker and some others relative to the exclusion process without referring explicitly to the environment. 
%It is thus convenient to define the corresponding probability with first marginal that lives on the space of paths of the exclusion process and second one in paths of the walker. 
%We define for any $(n,x)$ 
%\begin{equation}
%P^{\rho}_{n,x}=\P_\rho \times P^{\omega}_{n,x},
%\end{equation}
%where $\omega$ is the environment built from $(\eta(t,x))_{t\geq 0,x\in\Z}$ (see \eqref{defomega}). Observe that the second marginals of $P$ and $P^\rho$, that is the ones relative to the walker, are the same. 

%For $\eta\in \{0,1\}^\Z$ we consider the law $\P_0^\eta$ of the simple exclusion process with deterministic initial condition $\eta$ and define for any $(n,x)$
%\begin{equation}
%P^{\eta}_{n,x}=\P_0^{\eta}\times P^{\omega}_{n,x}.
%\end{equation}
%In order to get simpler notations let  $P^{\eta}$ stand for $P^{\eta}_{0,0}$. 

\subsection{Results}
We assume that the environment is {elliptic}: 
\begin{equation}\label{ellipticity}
0 \; < \alpha , \beta < \; 1. 
\end{equation} 

For our first result, we assume that the walker would drift to the right if, at each of his step, the environment would be entirely refreshed according to the equilibrium measure (the limiting case ``$\gamma = \infty$"). 
We assume thus that $\rho$, $\alpha$ and $\beta$ are such that 
\begin{equation}\label{drift condition}
E_0(X_1) \; = \; \E(2 \omega - 1) \; = \; \rho (2\alpha-1) + (1-\rho) (2\beta-1) \; > \; 0,
\end{equation}
where $E_0$ is the expectation with respect to $P_0$ and $\E$ the expectation with respect to $\P$.
We show that the drift to the right will still be observed if, instead of refreshing the environment at each step of the walker, 
it evolves according to the dynamics generated by \eqref{generator simple exclusion} with initial condition distributed as $\P_{\rho}$, provided that $\gamma$ is taken large enough once the parameters $\rho$, $\alpha$ and $\beta$ have been fixed.
The following theorem may therefore be seen as a perturbative result around the trivial  case ``$\gamma = \infty$":
\begin{Theorem}\label{Theorem: Drift to the right}
Assume that the ellipticity condition \eqref{ellipticity} and the drift condition \eqref{drift condition} hold. There exists $v_*>0$ so that, for $\gamma$ large enough,
\begin{enumerate}
\item  there exists $v(\gamma) \ge v_*$ so that 
\begin{equation*}
\lim_{t\to \infty} \frac{X_t}{t} \; = \; v (\gamma) \qquad P_0-a.s.,
\end{equation*}
\item the annealed central limit theorem holds: under $P_0$
\begin{equation*}
\left(\frac{X_{nt}-ntv(\gamma)}{\sqrt{n}}\right)_{t\geq 0}\implies (B_{t})_{t\geq 0},
\end{equation*}
where $(B_{t})_{t\geq 0}$ is a non-degenerate Brownian motion, and where the convergence in law is in the Skorohod topology.
\end{enumerate}
\end{Theorem}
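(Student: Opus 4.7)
Following the two-step scheme advertised in the introduction, the plan is to first establish ballisticity, $\liminf_{t\to\infty} X_t/t \geq v_* > 0$ $P_0$-almost surely, by a multi-scale analysis, and then upgrade it to the law of large numbers and the annealed invariance principle via a regeneration structure.

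\textbf{Step 1: multi-scale ballisticity.} Introduce a hierarchy of space-time scales $L_n$ growing at least geometrically and, at each scale, declare a space-time block \emph{good} if, starting from any point on its left face and for every initial environment outside an exceptional set, the walker exits through the right face within time $L_n/v_*$. The base case is the homogenization regime: for $\gamma$ very large, the environment seen at consecutive steps is close to an i.i.d.\ Bernoulli$(\rho)$ product, so \eqref{drift condition} together with a Hoeffding/Azuma bound yields ballisticity inside a fixed box with probability tending to one. For the inductive step, a bad block at scale $n+1$ must contain either (i) two well-separated bad sub-blocks at scale $n$ both visited by the walker, or (ii) an anomalous density fluctuation of the exclusion measure at scale $L_n$. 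Case (ii) is tamed by Gaussian concentration of $\pi_\rho$. For case (i), the heuristic already stressed in the paper applies: a trap of size $L_n$ disaggregates under the exclusion dynamics in time of order $L_n^2/\gamma$, whereas two independent traps of that size are typically at distance $\ed^{c L_n}$; hence two sufficiently separated sub-blocks can be treated as essentially independent up to stretched-exponentially small error. Iterating the resulting recursive inequality yields $P_0(\text{bad at scale } n) \leq \varepsilon_n$ with $\varepsilon_n$ decaying super-exponentially, and a Borel--Cantelli argument concludes.

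\textbf{Step 2: regeneration.} Inspired by \cite{SZ} and the subsequent adaptations cited in the introduction, I would define regeneration times $\tau_1 < \tau_2 < \dots$ so that, after $\tau_k$, the walker never backtracks past $X_{\tau_k}$ and the exclusion configuration to the right of $X_{\tau_k}$ at time $\tau_k$ has not been influenced, through the Harris graphical construction, by the walker's history up to $\tau_k$. Using the invariance of $\pi_\rho$ under the exclusion dynamics and a strong Markov argument at each regeneration time, the increments $(\tau_{k+1}-\tau_k, X_{\tau_{k+1}}-X_{\tau_k})_{k \geq 1}$ are i.i.d. The strong law for $X_t/t$ and the annealed invariance principle then follow from Kolmogorov's law and Donsker's theorem respectively, provided $E_0(\tau_1^2) < \infty$; this moment bound is extracted from the super-exponential estimates of Step~1 together with standard diffusive bounds for the exclusion process.

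\textbf{Main obstacle.} The hardest part is Step~2. Because in one dimension the exclusion process mixes only diffusively, the environment behind the walker never fully decorrelates from the environment ahead, so the ``fresh randomness'' heuristic must be quantified carefully. One has to design the backward space-time cone entering the definition of $\tau_1$ to be simultaneously wide enough to absorb, via the Harris graphical construction, every possible past influence of the walker on the future environment, yet narrow enough that a backward excursion across $X_{\tau_1}$ is excluded with high probability by the ballistic bound of Step~1. Balancing these two constraints---made possible by taking $\gamma$ very large, but not by any off-the-shelf renewal construction---is the technical core of the argument, and, as the introduction stresses, is the reason why the constructions of \cite{SZ, Comets Zeitouni, Avena den Hollander Redig, Hilario et al} cannot simply be transferred.
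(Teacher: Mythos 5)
Your two-step plan matches the overall strategy advertised in the introduction, but there are two substantive gaps, one in each step, plus a conceptual confusion about where the randomness lives.

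\textbf{Step 1.} The multi-scale structure you describe --- a bad block at scale $n+1$ forces two well-separated bad sub-blocks at scale $n$, which you then treat as ``essentially independent'' --- is not what the paper does, and the independence claim is not easily justified in a slowly-mixing medium. The paper's renormalization (Proposition \ref{Proposition: Renormalization large gamma}) is a \emph{time-scale} iteration with no spatial-block independence at all. Starting from an initial environment that is good in $\mathrm B(0,t_{n+1})$, it writes $X_{t_{n+1}}$ as a sum of $\sim t_n$ increments of length $t_n$ and controls each increment (after the first) by the scale-$t_n$ estimate. The mechanism making this work is not decoupling of blocks but the relaxation result (Proposition \ref{Proposition: good set at a later time}): after waiting a time $\gtrsim L^3/\gamma$, the environment in a box of size $L$ regenerates to a good configuration with high probability, so the inductive estimate applies conditionally at each sub-step. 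The only quantitative decoupling used is the Markov property of the exclusion process. Your stretched-exponential ``essentially independent'' step would require a separate, nontrivial spatial mixing estimate for the exclusion process inside a cone, which the paper deliberately avoids.

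\textbf{Step 2.} The genuine obstacle is not the one you name. You assert that $E_0(\tau_1^2)<\infty$ ``is extracted from the super-exponential estimates of Step~1 together with standard diffusive bounds.'' This is precisely the claim the paper says cannot be made. Proposition \ref{l:densite} --- that a positive fraction of the trajectory points $(n,X_n)$ are candidates satisfying \eqref{re1}--\eqref{re2} --- is described as the most technical part of the work and cannot be lifted from the references. The difficulty is that the events ``$(n,X_n)$ is a candidate'' for different $n$ are strongly dependent through the particles that have drifted into the backward cone, and $2^n$ possible walker paths preclude a naive union bound. The paper's resolution is a coarse-grained trajectory of $O(n/\ell)$ blocks, a random variable $d$ (equation \eqref{def de d}) quantifying how badly the candidate condition fails, and an explicit measurability statement (\eqref{independence lemma percolation}) that allows the bad-block events along the coarse-grained trajectory to be factorized over a partition into intervals. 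None of this follows from ballisticity or from generic diffusive bounds. Your proposal treats the moment bound as bookkeeping; it is the core of the proof.

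\textbf{Conceptual point.} You describe the backward cone as needing to absorb ``every possible past influence of the walker on the future environment.'' But the walker does not influence the environment: the exclusion (equivalently, interchange) process evolves autonomously. The dependence that must be severed is the reverse one --- the walker's trajectory up to $\tau_1$ is measurable with respect to the updates $U(s,\cdot)$, $s\le\tau_1$, and the types $\nu(\mu)$ of particles $\mu$ visited before $\tau_1$. Conditions \eqref{re2} and \eqref{re5} ensure that all such visited particles lie to the left of $X_{\tau_1}$ at time $\tau_1$ and will never re-enter the forward cone $T^+_{\tau_1,X_{\tau_1}}$, so the future depends only on updates after $\tau_1$ and on types of unvisited particles. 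This disjointness, not control of ``the walker's influence,'' is what makes the renewal increments i.i.d.\ (Proposition \ref{renew1}); the paper makes it precise by separating the environment into the interchange process $\xi\sim\P_1$ and the i.i.d.\ labels $\nu\sim\P_2$, which your proposal does not discuss.
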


For our second result, we assume that the walker is transient to the right in a static environment ($\gamma=0$), i.e.\@ that the parameters $\rho$, $\alpha$ and $\beta$ are such that 
\begin{equation}\label{static transience}
\E\left(\ln\frac{1-\omega}{\omega}\right) \; = \; \rho \ln \frac{1-\alpha}{\alpha} + (1-\rho) \ln \frac{1-\beta}{\beta} \; < \;  0.
\end{equation}
For background about static random walks in random environments, see e.g.\@ \cite{Zeitouni}.
We show that, for strictly positive and small enough $\gamma$, the walker has a positive velocity: 
\begin{Theorem}\label{Theorem: Drift to the left}
Assume that the ellipticity condition \eqref{ellipticity}, as well as the condition \eqref{static transience} for transience to the right in a static environment, hold. 
Then, for $\gamma>0$ small enough,
\begin{enumerate}
\item  there exists $v(\gamma)>0$ such that
\begin{equation*}
\lim_{t\to \infty} \frac{X_t}{t} \; = \; v (\gamma) \qquad P_0-a.s.,
\end{equation*}
\item the annealed central limit theorem holds: under $P_0$
\begin{equation*}
\left(\frac{X_{nt}-ntv(\gamma)}{\sqrt{n}}\right)_{t\geq 0}\implies (B_{t})_{t\geq 0},
\end{equation*}
where $(B_{t})_{t\geq 0}$ is a non-degenerate Brownian motion, and where the convergence in law is in the Skorohod topology.
\end{enumerate}
\end{Theorem}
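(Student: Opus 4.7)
The plan is to follow the two-step architecture spelled out in the introduction: a multi-scale renormalization establishes that $\liminf_{t\to\infty} X_t/t > 0$ almost surely under $P_0$ for $\gamma$ small enough; then, once such a priori ballisticity is available, a regeneration structure in the spirit of Sznitman--Zerner and Comets--Zeitouni delivers both the strong law of large numbers and the annealed invariance principle.

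For the first step I would introduce a hierarchy of space-time scales $(L_k,T_k)$ with $L_0\sim \gamma^{-c}$ for a suitable $c>0$ and $L_{k+1}=L_k^{a}$ for some $a\in(1,2)$, and declare a scale-$k$ space-time box \emph{good} when, starting from every $\pi_\rho$-typical exclusion configuration on the box, any walker entering through the left face exits through the right face before time $T_k$. The base step uses that on the window of length $\tau\gamma^{-1}$ the environment is essentially frozen, so the classical Solomon--Kesten--Kozlov--Spitzer theory applied under \eqref{static transience} guarantees that the static walker moves by $(\gamma^{-1})^{\delta}$ to the right from a typical $\pi_\rho$-environment. The induction step rests on two inputs of very different nature: (i) an atypically large particle excess of size $\ell$ has $\pi_\rho$-probability of order $\ed^{-c\ell}$, while the exclusion dynamics smears it out on time $\gamma^{-1}\ell^{2}\ll T_k$, so big static traps are ineffective at scale $L_k$; (ii) a bad scale-$(k+1)$ box must contain two well-separated bad scale-$k$ sub-boxes, which through a BK/van den Berg--Kesten-type inequality applied via the graphical construction of the exclusion process has super-polynomially small probability. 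Propagating the estimate $\P(\text{bad at scale }k)\le L_k^{-p_k}$ with $p_k\to\infty$ and invoking Borel--Cantelli yields the sought positive lower speed.

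Given $\liminf_t X_t/t \ge v_*>0$ holding $P_0$-almost surely, I would construct regeneration times $0<\tau_1<\tau_2<\cdots$ at which past and future of the trajectory become decorrelated. The definition: $\tau$ is a regeneration time if (a) the walker never backtracks below $X_\tau$ after time $\tau$, (b) he remains for all later times inside a forward space-time cone based at $(\tau,X_\tau)$ of slope strictly less than $v_*$, and (c) the exclusion configuration inside that cone is coupled, via the graphical construction, with an independent equilibrium sample under $\P_\rho$. The multi-scale estimates of step one, together with a standard reflection/overshoot argument, would give that such times exist almost surely and that the increments $(\tau_{k+1}-\tau_k,\,X_{\tau_{k+1}}-X_{\tau_k})$ are i.i.d.\@ with enough moments. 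The SLLN then yields $v(\gamma)=\Mean(X_{\tau_1})/\Mean(\tau_1)>0$, and a $2+\varepsilon$ moment on $\tau_1$ combined with Donsker--Prokhorov gives the non-degenerate Brownian limit.

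The principal obstacle is exactly this regeneration construction in a conservative one-dimensional environment. Because the space-time correlations of the simple exclusion process only decay as $t^{-1/2}$, particles visiting the walker early may shadow him on macroscopic time windows, so a naive look-back definition of regeneration will never yield genuine independence. The whole scheme must therefore be driven by the a priori ballistic speed $v_*>0$, which strictly beats the diffusive propagation rate $\sqrt{\gamma t}/t\to 0$ of exclusion information, and the forward cone must be sharp enough that the walker provably outruns this information forever. Quantifying the resulting cone-exit estimates down to a finite $2+\varepsilon$ moment on inter-regeneration times, rather than just finite mean, is the step that looks most delicate and where, as the authors stress, the analogous argument of Hil\'ario et al.\@ cannot be imported verbatim and model-specific control on the exclusion dynamics has to be produced.
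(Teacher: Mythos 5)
Your two-step architecture (multi-scale a priori ballisticity, then a regeneration structure) matches the paper's; the high-level heuristic that the a priori speed $v_*>0$ outruns the diffusive spreading rate $\sqrt{\gamma t}/t$ of exclusion information is exactly the right intuition. However, both steps contain genuine gaps.

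For the renormalization step, your proposed mechanism (ii) --- ``a bad scale-$(k+1)$ box must contain two well-separated bad scale-$k$ sub-boxes, controlled by a BK/van den Berg--Kesten-type inequality'' --- is not what the paper does, and it is doubtful it can be made to work as stated: the exclusion process is conservative, so the events ``box $C$ is bad'' for disjoint space-time boxes are far from independent and BK does not apply out of the box. The paper instead controls, at each dyadic time scale $t_n$ (with $t_{n+1}\sim t_n^2$ and $t_0\sim\ln^4(\gamma^{-1})$, not $t_0\sim\gamma^{-c}$), the probability that the rescaled walker $Y_t=X_{\gamma^{-1}t}$ moves by less than $c_n\gamma^{-\delta}t_n$, by comparing the $t_n$-increments of $Y$ to i.i.d.\@ variables via a concentration-plus-Markov argument, feeding in Proposition~\ref{Proposition: good set at a later time} to guarantee that traps of the relevant size have dissipated when a new increment starts. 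An additional subtlety you do not address is that the deterministic bound $Y_t-Y_s\geq -\gamma^{-1}(t-s)$ degenerates as $\gamma\to 0$; the paper needs the extra Lemma~\ref{Lemma: crude lower bound} to replace it by a probabilistic one, which in turn forces polynomial rather than stretched-exponential control $1/t^q$ in Proposition~\ref{Proposition: small gamma}. Your claim $\P(\text{bad at scale }k)\leq L_k^{-p_k}$ with $p_k\to\infty$ is stronger than what the paper achieves and stronger than what is needed.

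For the regeneration step there are two substantive omissions. First, because the walker's transition probability depends on the \emph{type} of the particle under him, independence across regeneration epochs requires decoupling the motion of particles from their Bernoulli types; the paper achieves this by rewriting the exclusion dynamics as an interchange process of labeled particles tensored with an i.i.d.\@ type field $\nu$, and the renewal conditions \eqref{re2} and \eqref{re5} are precisely what guarantees that the walker never re-encounters a particle whose type he has already probed. Your condition (c), a coupling of the occupation field in the forward cone with a fresh equilibrium sample, does not by itself rule this out and so does not give the required $\sigma$-field factorization in \eqref{cle}. Second, and more importantly, you assert that a ``standard reflection/overshoot argument'' produces the regeneration times; the paper's Proposition~\ref{l:densite}, showing a positive density of times satisfying \eqref{re1}--\eqref{re2}, is explicitly flagged as the most technical part of the work, requires a dedicated combinatorial scheme over block trajectories together with a quantitative tail bound on how badly \eqref{re2} can fail (the variable $d_{t,x}$ and Lemma~\ref{lemma: particle too much to the right}), and the authors stress it is not deliverable by importing the methods of the cited predecessors. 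This is precisely where your proposal is thinnest, and without it neither the existence of the first renewal time nor the moment bound of Proposition~\ref{moment tau} follows.
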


We stress that in the case where \eqref{drift condition} holds while the left hand side of \eqref{static transience} is strictly positive,
the asymptotic velocity of the walker can be either positive or negative according of the value of $\gamma$. 
We can thus exclude that any of our two theorems can be valid as such for all strictly positive $\gamma$.

% SUBSECTION
\subsection{Outline of the paper}
The rest of the article is organized in four sections. 
In Section \ref{Section: Dissipation of traps}, we control the time of dissipation of zones with high density of particles. 
In Section \ref{Section: Drift to the right}, we use these results on the environment together with a renormalization procedure to derive that, if the assumption \eqref{drift condition} holds and if $\gamma$ is large enough, the walk is ballistic to the right.
In Section \ref{Section: small gamma}, the same ideas are used to prove that the walk is ballistic to the right if the assumption \eqref{static transience} holds and if $\gamma >0$ is small enough. 
Finally, in Section \ref{Section: Proof of law of large numbers}, we build a renewal structure to show that the bounds obtained in Sections \ref{Section: Drift to the right} and \ref{Section: small gamma} imply
the law of large numbers and the annealed invariance principle stated in Theorems \ref{Theorem: Drift to the right} and \ref{Theorem: Drift to the left}.

% SECTION
\section{Dissipation of traps}\label{Section: Dissipation of traps}

The walker can be slowed in the places where the concentration of particles is too high with respect to the expected density $\rho$.  
These locally anomalous configurations of the environment are called traps. 
Here we make precise the idea that traps disappear on diffusive time scales. Concretely, we establish that, if in a box of size $L$ around a point $x$, the density of particles is very close to the density $\rho$ for a given initial profile $\eta\in \{0,1\}^\Z$, 
then, waiting a time bigger than $L^2$, the density in smaller boxes around $x$ becomes close to $\rho$ as well, with high probability with respect to the evolution of the process. 
The section is divided into three parts: 
we first state our results, then show some technical lemmas, and finally give the proof of our propositions.  
The technical estimates in Section \ref{subsection: Some lemmas}
are very close to some results obtained in \cite{dos_Santos} (see Lemma 5.3 there). 

Before starting, let us introduce an extra notation, to be in use mainly in Sections \ref{Section: Dissipation of traps}-\ref{Section: small gamma}.
For $\eta \in \{0,1\}^\Z$, we define the law $\P^\eta$ of the simple exclusion process, defined by \eqref{generator simple exclusion}, for the deterministic initial condition $\eta$. 
We define then
\begin{equation}
P^{\eta}=\P^{\eta}\times P^{\omega}_{0,0},
\end{equation}
where $\omega$ is the environment built from $(\eta(t,x))_{t\geq 0,x\in\Z}$ (see \eqref{defomega}).

%SUBSECTION
\subsection{Statement of the results}
Given $x\in \Z$, $L \in \N$ and $\eta \in \N^\Z$, let
\begin{equation}\label{density in a box}
\langle \eta \rangle_{x,L} \; = \; \frac{1}{|\mathrm B(x,L)|} \sum_{y\in \mathrm B(x,L)} \eta_y 
\end{equation}
be the empirical density of particles in a box of radius $L$ around the point $x$. 
In \eqref{density in a box}, we have used the notations
\begin{equation*}
\mathrm B(x,L) \; = \; [x-L,x+L] \cap \Z \qquad  \text{and}  \qquad |\mathrm B(x,L)| \; = \; 2L+1.
\end{equation*}
Let $(\epsilon_L)_{L\ge 0}$ be some decreasing sequence of numbers in $] 0,1 ]$.
The numbers $\epsilon_L$ will serve to control the difference between the density $\rho$ and the empirical density in a box of size $L$.
Given $\eta\in \{ 0,1 \}^\Z$ and $L\in \N$, we define the set of {good} sites $G(\eta,L) \subset \Z$ as follows: 
we say that $x \in G(\eta,L)$ if
\begin{equation}\label{definition good set}
\langle\eta\rangle_{x,L'} \; \le \; (1 + \epsilon_L) \rho \qquad \text{for all} \qquad L' \ge L.  
\end{equation}

The main result of this section is contained in the following proposition, where we use the assumption $\gamma t \ge L^3$ instead of the more natural assumption $\gamma t \ge C L^2$ for some large constant $C$, in order to avoid the introduction of too many constants. 
\begin{Proposition}\label{Proposition: good set at a later time}
There exist some constants $\Const < + \infty$ and $c> 0$ such that,
given an initial profile $\eta \in \{ 0,1 \}^\Z$, and given $x\in\Z$, $t\ge 0$ and $J,L\in \N$, 
the conditions 
\begin{equation*}
x\in G\big(\eta,L\big), \qquad \gamma t\; \ge \; L^{3}, \qquad J \; \le \; L, \qquad  L (\epsilon_J - \epsilon_L) \; \ge \; \Const,
\end{equation*}
imply that 
\begin{equation}\label{result Proposition good set at a later time}
P^{\eta} \Big(x \,\notin\, G \big(\eta (t),J \big) \Big) 
\; \le \;  
\Const \, \frac{\exp \big( - c J \rho^2 (\epsilon_J - \epsilon_L)^2  \big)}{\rho^2 (\epsilon_J - \epsilon_L)^2}.
\end{equation}
\end{Proposition}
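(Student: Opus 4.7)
My plan is to bound $P^\eta(x \notin G(\eta(t), J))$ via a union bound on the scales $L' \geq J$, combining a mean estimate coming from the self-duality of the symmetric simple exclusion process with a Gaussian-tail concentration inequality for its density fluctuations.

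\textbf{Reduction and mean estimate.} From the definition of $G$,
\[
\{x \notin G(\eta(t), J)\} \; \subset \; \bigcup_{L' \geq J}\bigl\{\langle \eta(t)\rangle_{x, L'} > (1+\epsilon_J)\rho\bigr\},
\]
so it suffices to control each term separately and sum in $L'$. Self-duality of SSEP yields $\E^\eta[\eta_y(t)] = \sum_z p_t^\gamma(y, z)\eta_z$, where $p_t^\gamma$ is the kernel of the symmetric continuous-time random walk on $\Z$ with jumps at rate proportional to $\gamma$. Averaging over $y \in B(x, L')$, the expectation $\E^\eta[\langle \eta(t)\rangle_{x, L'}] = \sum_z K(z)\eta_z$ is driven by a probability kernel $K$ symmetric and unimodal about $x$. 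A summation by parts rewrites this as $\sum_{R \geq 0} w(R)(2R+1)\langle \eta\rangle_{x, R}$ for nonnegative weights $w(R) = K(x+R) - K(x+R+1)$ satisfying $\sum_R w(R)(2R+1) = 1$. Splitting at $R = L$, the tail $R \geq L$ contributes at most $(1+\epsilon_L)\rho$ thanks to $x \in G(\eta, L)$, and the inner sum is at most $\sum_{R < L} w(R)(2R+1) \leq CL \max_z p_t^\gamma(\cdot, z) \leq CL/\sqrt{\gamma t} \leq CL^{-1/2}$ since $\gamma t \geq L^3$. Thus
\[
\E^\eta\bigl[\langle \eta(t)\rangle_{x, L'}\bigr] \; \leq \; (1+\epsilon_L)\rho + CL^{-1/2},
\]
and choosing $\Const$ large enough in $L(\epsilon_J - \epsilon_L) \geq \Const$ places this mean at distance at least $\tfrac{\rho}{2}(\epsilon_J - \epsilon_L)$ below the threshold $(1+\epsilon_J)\rho$.

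\textbf{Concentration.} It then remains to show that for each $L' \geq J$,
\[
P^\eta\Bigl(\langle \eta(t)\rangle_{x, L'} - \E^\eta[\langle \eta(t)\rangle_{x, L'}] > \tfrac{\rho}{2}(\epsilon_J - \epsilon_L)\Bigr) \; \leq \; C \exp\bigl(-c L' \rho^2 (\epsilon_J - \epsilon_L)^2\bigr).
\]
The most convenient route is via the stirring (graphical) representation of SSEP: there is a random permutation $\sigma_t$ of $\Z$ such that $\eta(t) = \eta(0)\circ \sigma_t^{-1}$, so $\sum_{y \in B(x, L')}\eta_y(t) = \sum_{z \in \sigma_t^{-1}(B(x, L'))}\eta_z$. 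The random set $\sigma_t^{-1}(B(x, L'))$ has cardinality $|B(x, L')|$ and is distributed as the positions of $|B(x, L')|$ interacting dual SEP walks. A second-order duality computation with two coalescing walks (or the negative association of SSEP) yields a variance bound of order $L' \rho(1-\rho)$, which is then upgraded to the Gaussian tail above by a Hoeffding-type argument for the exchangeable dual walks, following the scheme of Lemma 5.3 in \cite{dos_Santos}.

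\textbf{Summation and main obstacle.} The per-scale bound summed via the union bound gives
\[
P^\eta\bigl(x \notin G(\eta(t), J)\bigr) \; \leq \; \sum_{L' \geq J} C \exp\bigl(-c L' \rho^2(\epsilon_J - \epsilon_L)^2\bigr),
\]
and the geometric summation produces the prefactor $1/(\rho^2(\epsilon_J - \epsilon_L)^2)$ exactly as in \eqref{result Proposition good set at a later time}; note that this prefactor is an artifact of the geometric sum rather than of a Chebyshev application. The main obstacle is the concentration step: the factor $L' \rho^2$ in the exponent reflects genuine Gaussian behavior and cannot be extracted from a second-moment bound alone, so the argument must exploit the stirring representation and the exchangeability of the dual SEP walks carefully. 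The mean estimate and the final summation are, by comparison, routine once those ingredients are in place.
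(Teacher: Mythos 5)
Your overall architecture matches the paper's: union bound over scales $L'\geq J$, a heat-kernel mean estimate using the rearrangement $\sum_z K(z)\eta_z = \sum_R w(R)(2R+1)\langle\eta\rangle_{x,R}$, a Gaussian concentration bound, and a geometric summation producing the polynomial prefactor. Your alternative route to the concentration step --- the stirring representation of the exclusion process plus a Hoeffding-type inequality for sampling without replacement, in the spirit of Lemma~5.3 of \cite{dos_Santos} --- is a legitimate substitute for the paper's use of Liggett's comparison inequality (Proposition~1.7, p.~366 of \cite{Liggett}), and yields the same exponential bound.

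There is, however, a quantitative gap in the mean estimate. You bound the inner weight by $\sum_{R<L}w(R)(2R+1)\leq C L \max_z p(t,z)\leq C L (\gamma t)^{-1/2}\leq C L^{-1/2}$, using only the \emph{supremum} of the heat kernel. To place the mean a distance $\tfrac{\rho}{2}(\epsilon_J-\epsilon_L)$ below the threshold you then need $C L^{-1/2}\leq\tfrac{\rho}{2}(\epsilon_J-\epsilon_L)$, i.e.\ $\sqrt{L}\,(\epsilon_J-\epsilon_L)\gtrsim 1$. But the stated hypothesis $L(\epsilon_J-\epsilon_L)\geq\Const$ does \emph{not} imply this uniformly: taking $\epsilon_J-\epsilon_L=\Const/L$ with $L\to\infty$ keeps $L(\epsilon_J-\epsilon_L)=\Const$ fixed while $\sqrt{L}\,(\epsilon_J-\epsilon_L)=\Const/\sqrt{L}\to 0$. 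As written your argument therefore proves the proposition only under the strictly stronger assumption $\sqrt{L}\,(\epsilon_J-\epsilon_L)\geq\Const$. The fix is exactly what Lemma~\ref{Lemma: estimate heat equation} does in the paper: exploit the \emph{smoothness} of the kernel, not just its height. The Fourier representation of $p(t,\cdot)=\ed^{\gamma\Delta t}\delta_0$ gives the gradient estimate $|p(t,z)-p(t,z+1)|\leq C/(\gamma t)$, hence $p_{L'}(t,0)-p_{L'}(t,L)\leq C L/(\gamma t)$, and so
$\sum_{R<L}w(R)(2R+1)\leq(2L+1)\sum_{R<L}w(R)=(2L+1)\big(p_{L'}(t,0)-p_{L'}(t,L)\big)\leq C L^2/(\gamma t)\leq C/L$
under $\gamma t\geq L^3$. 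With $C/L$ in place of $C L^{-1/2}$, the hypothesis $L(\epsilon_J-\epsilon_L)\geq\Const$ is precisely what is needed, and the rest of your argument goes through.
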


\noindent
This proposition can only be applied if, given a profile $\eta$, one waits a time $\gamma t \ge L^3$. 
The next proposition furnishes a control that holds for short times too, 
\begin{Proposition}\label{Proposition: conservation of good sets}
There exist a constant $c> 0$ such that,
given an initial profile $\eta \in \{ 0,1 \}^\Z$, and given $\epsilon > 0$, $t\ge 0$, $x\in\Z$ and $L\in \N$, 
it holds that, if
\begin{equation*}
y\in G\big(\eta,L\big) \quad \forall y \in \mathrm B(x,L) \qquad \text{and} \qquad  \epsilon \ge \epsilon_L,
\end{equation*}
then
\begin{equation*}
P^{\eta} \Big( \langle \eta(t) \rangle_{x,L} \ge (1+\epsilon)\rho \Big) 
\; \le \;  
\exp \big( - c L \rho^2 (\epsilon - \epsilon_L)^2  \big).
\end{equation*}
\end{Proposition}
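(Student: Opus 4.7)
My plan is to combine the graphical (stirring) construction of the symmetric simple exclusion process with a Chernoff bound exploiting negative association. In the stirring picture one places on every edge $\{z,z+1\}$ an independent Poisson clock of rate $\gamma$ and swaps $\eta_z$ and $\eta_{z+1}$ at each ring; labeling the initial particles by their positions $y \in Z := \{y\in\Z : \eta_y = 1\}$, each labeled trajectory $(X^y_s)_{s \ge 0}$ is marginally a continuous-time simple random walk of total jump rate $2\gamma$, and the family $(X^y)_{y \in Z}$ is negatively associated, a standard feature of the interchange process. With this representation,
\begin{equation*}
N \;:=\; \sum_{z \in \mathrm B(x,L)} \eta_z(t) \;=\; \sum_{y \in Z} \xi_y, \qquad \xi_y \;:=\; \mathbf{1}\{X^y_t \in \mathrm B(x,L)\},
\end{equation*}
realizes $N$ as a sum of negatively associated $\{0,1\}$-valued variables.

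First I would bound the expectation. Let $q(y) := P_y(Y_t \in \mathrm B(x,L))$ for $Y$ a continuous-time SRW of rate $2\gamma$. The kernel $q$ depends only on $|y-x|$, is Gaussian on the scale $\sqrt{\gamma t}$, and satisfies $\sum_y q(y) = |\mathrm B(x,L)|$. Since the hypothesis implies in particular $x \in G(\eta,L)$, i.e.\ $\langle \eta\rangle_{x,L'} \le (1+\epsilon_L)\rho$ for every $L' \ge L$, a summation by parts over the dyadic shells $\mathrm B(x,2^kL)\setminus \mathrm B(x,2^{k-1}L)$ (using the trivial bound $\eta_y \le 1$ on the inner ball and the density estimate against the Gaussian decay of $q$ on the outer shells) yields
\begin{equation*}
E^\eta[N] \;=\; \sum_y \eta_y\, q(y) \;\le\; (1+C\epsilon_L)\, \rho\, |\mathrm B(x,L)|.
\end{equation*}

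The concentration step is then a direct Chernoff argument. By negative association, the moment generating function factorizes as an inequality: for every $\lambda \ge 0$,
\begin{equation*}
E^\eta\!\bigl[e^{\lambda N}\bigr] \;\le\; \prod_{y\in Z}\bigl(1+(e^\lambda-1)q(y)\bigr) \;\le\; \exp\!\bigl((e^\lambda-1)\,E^\eta[N]\bigr).
\end{equation*}
Markov's inequality together with a standard Bernstein-type optimization, picking $\lambda$ of order $(\epsilon-\epsilon_L)$, yields
\begin{equation*}
P^\eta\bigl( N \ge (1+\epsilon)\rho\, |\mathrm B(x,L)| \bigr) \;\le\; \exp\!\bigl( - c\, L\, \rho\, (\epsilon - \epsilon_L)^2 \bigr),
\end{equation*}
which is in fact stronger than the claimed bound, since $\rho^2 \le \rho$.

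The main difficulty is to justify the exponential-moment factorization: one must invoke negative association of the labeled stirring trajectories, a property which is classical but not elementary. An alternative route would rely on log-Sobolev or spectral gap estimates for symmetric exclusion, at the cost of heavier abstract machinery. The expectation bound itself is a routine Gaussian convolution against the density upper bound available at every scale $\ge L$ around $x$, the only care needed being to arrange the dyadic decomposition so that the contributions of the inner and outer shells add up to the desired factor $(1+C\epsilon_L)\rho$.
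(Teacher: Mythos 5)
Your concentration step is sound: the factorization of the exponential moment via negative association of the stirring trajectories is a legitimate substitute for the Liggett comparison inequality the paper invokes in its Lemma~\ref{Lemma: large deviations}, and your observation that one can keep $\langle\overline\eta(t)\rangle_{x,L}\lesssim\rho$ in the Bernstein optimization to improve $\rho^2$ to $\rho$ in the exponent is correct. The problem lies entirely in the expectation bound, and it is a real gap, not a matter of polish. First, you only invoke $x\in G(\eta,L)$, whereas the hypothesis gives you $y\in G(\eta,L)$ for \emph{every} $y\in \mathrm B(x,L)$; that extra information is essential here. Second, and decisively, a dyadic/Abel summation centered at $x$ with the trivial bound $\eta_y\le 1$ on the inner ball cannot produce the factor $(1+\epsilon_L)\rho$: if you carry it out you obtain
$E^\eta[N]\le (1+\epsilon_L)\rho\,|\mathrm B(x,L)|+(2L+1)\bigl(q(x)-q(x\pm L)\bigr)$,
and the error term is of order $(2L+1)\cdot L^2/(\gamma t)$, i.e.\ $\Theta(L)$ whenever $\gamma t\lesssim L^2$ (and equal to $|\mathrm B(x,L)|$ at $t=0$). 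This is exactly the paper's Lemma~\ref{Lemma: estimate heat equation}, which is only useful under $\gamma t\ge L^3$; Proposition~\ref{Proposition: conservation of good sets} is precisely the one place where no such hypothesis is available, and the paper stresses that $\gamma$ plays no role in it.

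Third, even granting your claimed bound $E^\eta[N]\le(1+C\epsilon_L)\rho|\mathrm B(x,L)|$ with some $C>1$, the argument would not close: the Chernoff deviation you then control is $(\epsilon-C\epsilon_L)\rho$, which can be nonpositive under the stated hypothesis $\epsilon\ge\epsilon_L$, while the statement asserts a bound in terms of $(\epsilon-\epsilon_L)$. The constant $1$ in $(1+\epsilon_L)\rho$ must be exact, and this is what the paper's Lemma~\ref{Lemma: heat equation short times} achieves: one decomposes the heat kernel $p(t,\cdot)$ into a piece $\tilde p$ supported in $\mathrm B(0,L-1)$ plus weighted indicators of boxes $\mathrm B(0,k)$ with $k\ge L$. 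The box pieces are controlled by $\langle\eta\rangle_{y,k}\le(1+\epsilon_L)\rho$ for $y\in\mathrm B(x,L)$; the compactly supported piece $\tilde p$, after averaging $\overline\eta_y(t)$ over $y\in\mathrm B(x,L)$, turns into boxcar averages $\langle\eta\rangle_{x+z,L}$ with $x+z\in\mathrm B(x,L)$, again controlled by the \emph{full} hypothesis. Both ingredients — averaging over the whole ball $\mathrm B(x,L)$, and using goodness of every site in $\mathrm B(x,L)$ rather than of $x$ alone — are what make the estimate uniform in $t$, and both are absent from your argument.
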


\noindent
One observes that $\gamma$ plays no role in Proposition \ref{Proposition: conservation of good sets}.

% SUBSECTION
\subsection{Some lemmas: heat equation properties and concentration}\label{subsection: Some lemmas}
We let 
\begin{equation}\label{mean evolution of the field}
\overline{\eta}(t) \; = \; E^\eta \big(\eta(t)\big)
\end{equation}
be the mean value of the field after a time $t$, starting from the initial field $\eta$, where we have used $E^{\eta}$ for the expectation with respect to $P^\eta$.
The mean evolution $\overline{\eta}(t)$ solves the discrete heat equation $\partial_t \overline{\eta} = \gamma \Delta \overline{\eta}$ with initial condition $\overline{\eta}(0) = \eta$.
The operator $\Delta$ appearing here is the discrete Laplacian defined by $\Delta u (x) = u(x+1) - 2 u(x) + u(x-1)$ for $u:\Z\rightarrow \R$.

We find it convenient to introduce three closely related kernels.
Let first $p:\R_+\times \Z \rightarrow \R$ be the heat kernel associated to the Laplacian $\gamma\Delta$: 
$p$ solves the initial value problem 
\begin{equation}\label{definition of p kernel}
p(0,\cdot)= \delta_0(\cdot), \qquad  \partial_t p = \gamma\Delta p.
\end{equation}
So, for $x\in \Z$ and $t\ge 0$, $p(t,x)$ represents the probability that a free particle jumping with rate $\gamma$ starting at origin sits on site $x$ at time $t$. 
Given $L\in \N$, let then $\mathbf{p}_L$ and $p_L$ be given by
\begin{equation}\label{definition of p L kernels}
\mathbf{p}_L (t,x) \; = \; \sum_{y\in \mathrm B(0,L)} p(t,x+y) \qquad \text{and} \qquad p_L(t,x) \; = \; \frac{1}{|\mathrm B(0,L)|}\mathbf{p}_L(t,x) 
\end{equation}
for $(t,x) \in \R_+\times\Z$.
The quantity $\mathbf{p}_L(t,x)$ represents the probability that a free particle starting from the origin lies in the box of size $L$ centered at $x$ at time $t$.

Our first lemma furnishes a concentration bound: with high probability, 
the empirical density of $\eta(t)$ in a box of size $L$ does not deviate too much from the empirical density of the mean evolution $\overline{\eta}(t)$ in the same box. 
This result does not depend on $\gamma$. 
\begin{Lemma}\label{Lemma: large deviations}
There exists a constant $\const > 0$ such that, 
given $\eta\in \{ 0,1 \}^\Z$, $L\in \N$, $x \in \Z$, $t\ge 0$ and $a \ge 0$,
\begin{equation*}
P^{\eta} \big(\langle \eta (t) - \overline{\eta}(t) \rangle_{x,L}  \ge a \big) 
\; \le \; 
% Inutile pour l'exclusion simple:
%\max \big\{ 
\ed^{-\const a^2 L}.
%\exp \bigg(- \frac{\const a^2 L }{ \langle \overline{\eta}(t) \rangle_{x,t}}\bigg)
%\, , \,  
%\ed^{-\const a L} \big\}.
\end{equation*} 
\end{Lemma}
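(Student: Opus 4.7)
The plan is to exploit the graphical (stirring) construction of the simple exclusion process in order to reduce the claim to an exponential concentration inequality for a sum of negatively associated Bernoulli variables. In the stirring construction, to each edge $\{y,y+1\}$ of $\Z$ one attaches an independent Poisson point process of intensity $\gamma$, and at each ring one swaps the values of $\eta$ at the two endpoints; iterating up to time $t$ produces a random bijection $\tau_t:\Z\to\Z$ such that $\eta_y(t)=\eta_{\tau_t(y)}$. Since the initial profile $\eta$ is deterministic, $\eta_y(t)=\mathbf{1}_S(\tau_t(y))$ with $S=\{z\in\Z:\eta_z=1\}$, while the marginal law of each $\tau_t(y)$ is that of a rate-$\gamma$ continuous-time simple random walk started at $y$; in particular $E^\eta(\eta_y(t))=\overline{\eta}(t,y)$.

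The first step is to establish that the family $(\eta_y(t))_{y\in\mathrm{B}(x,L)}$ is negatively associated (NA). The intuition is that the images $(\tau_t(y))_y$ are almost surely pairwise distinct, so their indicators of landing in a common set compete for the same mass. Formally, the deterministic initial family $(\eta_z)_{z\in\Z}$ is trivially NA, and each Poisson swap acts on the joint law by a deterministic transposition of two coordinates, an operation that preserves NA. Iterating over the (a.s.\ finitely many, after truncating to a finite sub-lattice) Poisson events occurring up to time $t$ and then passing to the infinite-volume limit, one obtains that $(\eta_y(t))_{y\in\Z}$ is NA for every fixed $t\ge 0$.

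The second step is to apply the classical exponential inequality for sums of NA Bernoulli variables. The key estimate is the product bound on the Laplace transform: for every $\lambda\in\R$,
\begin{equation*}
E^\eta\!\Bigl[\exp\!\Bigl(\lambda\!\!\sum_{y\in\mathrm{B}(x,L)}\!\!\bigl(\eta_y(t)-\overline{\eta}(t,y)\bigr)\Bigr)\Bigr]\;\le\;\prod_{y\in\mathrm{B}(x,L)}E^\eta\!\bigl[\exp\!\bigl(\lambda(\eta_y(t)-\overline{\eta}(t,y))\bigr)\bigr]\;\le\;\exp\!\bigl(|\mathrm{B}(x,L)|\,\lambda^2/8\bigr),
\end{equation*}
where the second inequality is Hoeffding's lemma for $[0,1]$-valued centered variables. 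A Chernoff optimisation in $\lambda$ yields the Gaussian tail $\exp(-2a^2|\mathrm{B}(x,L)|)$, which, upon dividing by $|\mathrm{B}(x,L)|=2L+1$ inside the probability, gives the announced estimate with $\const=2$.

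The main obstacle I anticipate is the clean propagation of NA under the stirring in infinite volume: one must first work in a finite box, verify NA there by iterating coordinate transpositions, and then control the limit as the box grows to $\Z$. An alternative that sidesteps NA altogether would be to bound the exponential moment $E^\eta[\exp(\lambda\sum_{y\in\mathrm{B}(x,L)}(\eta_y(t)-\overline{\eta}(t,y)))]$ directly, through a forward martingale built from the backward heat equation associated with the stirring generator, and to close with a Chernoff optimisation; this route reaches the same Gaussian tail $\ed^{-\const a^2 L}$ though with a constant whose value is less transparent.
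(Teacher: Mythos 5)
Your route is genuinely different from the paper's. The paper controls the exponential moment $E^\eta\bigl[\exp(\delta\langle\eta(t)\rangle_{x,L})\bigr]$ by comparison with a system of \emph{independent} random walks, via Liggett's inequality (Proposition 1.7 of Chapter VIII of \cite{Liggett}), then evaluates the independent-particle moment as an explicit product and optimizes $\delta$; you instead invoke negative association of the occupation variables of the stirred process together with Hoeffding's lemma and a Chernoff bound. Granting the NA claim, your second step is a clean and in fact slightly sharper derivation of the same Gaussian tail, so the two routes are comparable in strength.

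The gap is in the first step. You argue that, conditionally on the Poisson rings up to time $t$, the configuration $\eta(t)$ is obtained from the deterministic profile $\eta$ by a fixed sequence of transpositions, hence is NA, and that the result follows by ``iterating'' and ``passing to the limit.'' But conditional NA is vacuous here (each conditional law is a point mass), and what you actually need — NA of the \emph{unconditional} law — does not follow, because NA is not closed under mixtures. Averaging over the Poisson realization is exactly applying partial swaps $\mu\mapsto p\,\mu\circ\sigma_{ij}+(1-p)\mu$ to the law, and it is closure of negative dependence under \emph{partial} swaps, not under deterministic transpositions, that must be established; this is a non-trivial question. The conclusion itself is true: the occupation field of the SSEP started from a deterministic profile is NA at every later time, but the known proof goes through the preservation of the strongly Rayleigh property under symmetric exclusion dynamics (Borcea--Br\"and\'en--Liggett), which is far from the conditioning argument you sketch. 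To repair your proof, either cite that result in place of your iteration, or fall back on the paper's comparison inequality $E^\eta\bigl[e^{\delta\langle\eta(t)\rangle_{x,L}}\bigr]\le E^\eta\bigl[e^{\delta\langle\theta(t)\rangle_{x,L}}\bigr]$ with $\theta$ the free process, which achieves the needed factorization directly.
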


\begin{proof}
Given $\delta \ge 0$, it follows from Markov's inequality that
\begin{equation}\label{first thing in proof concentration inequality ssep}
P^{\eta} \big(\langle \eta (t) - \overline{\eta}(t) \rangle_{x,L}  \ge a \big) 
\; \le \;
\ed^{-\delta a} \cdot \ed^{-\delta \langle \overline{\eta}(t) \rangle_{x,L}}  \cdot E^\eta \Big( \ed^{\delta \langle \eta (t) \rangle_{x,L} } \Big).
\end{equation}
Let us first give an expression for $ \langle \overline\eta (t) \rangle_{x,L}$.
{For $x\in \Z$,  $\overline{\eta}_x(t)=P^\eta\left(\eta_x(t)=1\right)$. 
Note that $\{\eta_x(t)=1\}=\bigcup_{z\in\Z,\eta_z=1} A_z(t,x)$, where $A_z(t,x)$ is the event that a particle initially at $z$, sits at $x$ at time $t$. 
As these events are mutually disjoint due to the exclusion constraint, $\overline{\eta}_x(t)= \sum_{z\in \Z} p(t,x-z) \eta_z$, and we compute}
\begin{align}
\langle \overline{\eta}(t) \rangle_{x,L} 
\; &= \; 
\frac{1}{2L+1} \sum_{y\in \mathrm B(0,L)}\overline{\eta}_{x+y}(t)
\; = \; 
\frac{1}{2L+1} \sum_{y\in \mathrm B(0,L)} \sum_{z\in \Z} p(t,x+y-z) \eta_z
\nonumber\\
\; &= \; 
\sum_{z\in \Z} p_L(t,x-z) \eta_z,
\label{mean in terms of kernels}
\end{align}
as is seen from the definition \eqref{definition of p L kernels} of $p_L$.

Let us then work out the third factor in \eqref{first thing in proof concentration inequality ssep}. 
We will use Liggett's inequality to get rid of the exclusion constraint (see Proposition $1.7$ p.$366$ in \cite{Liggett}). 
Let us define the process $\theta$ that represents the collective motion of independent particles evolving on $\Z$.
So, let $\theta = (\theta(t))_{t\ge 0}$ be the process on $\N^\Z$ defined by the generator
\begin{equation*}
\overline{\mathcal L} f (\theta) \; = \; \sum_{x\in \Z} \theta_x \big( f (\theta^{x,x+1}) - 2 f (\theta) + f (\theta^{x,x-1}) \big),
\end{equation*}
with $\theta^{x,y} = (\dots , \theta_x - 1, \dots , \theta_y +1\dots)$ if $\theta = (\dots , \theta_x, \dots , \theta_y, \dots)$.
We assume that $\theta(0) = \eta$.
It is convenient to adopt the following interpretation: %of the process $\theta$: 
we say that there are $n\in \N$ particles at $x\in \Z$ at time $t\ge 0$ if and only if $\theta_x(t) = n$. 
Let us label all the particles, in an arbitrary way, by $k\in\N^*$. 
Let $X_k(t)$ be their position at time $t$.
At any time $t\ge 0$, the variables $(X_k(t))_{k\ge 1}$ are independent. 

While Liggett's inequality is stated for a finite number of particles, it is possible to use it here for the whole infinite system. 
Indeed, we can use it first for any truncated initial condition $\eta^N$, $N\geq 1$, where $\eta^N_i=0$ if $|i|>N$ and $\eta^N_i=\eta_i$ if $|i|\leq N$. 
We then couple these various initial conditions using the Harris graphical construction (see \cite{Harris}),
and finally use the monotone convergence theorem to extend Liggett's comparaison inequality to the whole system starting from $\eta$ (see for example \cite{Arratia} for a similar generalization). 
Remembering the definition \eqref{definition of p L kernels} of $\mathbf p_L$, we get
\begin{align}
E^\eta \Big( \ed^{\delta \langle \eta (t) \rangle_{x,L} } \Big)
\; \le \; 
E^\eta \Big( \ed^{\delta \langle \theta (t) \rangle_{x,L} } \Big)
\; &= \; 
E^\eta \Big( \ed^{\frac{\delta}{2L+1}\sum_{k\ge 1} \mathbf 1_{\mathrm B(x,L)}(X_k (t))} \Big)
\; = \; 
\prod_{k\ge 1} E^\eta \Big( \ed^{\frac{\delta}{2L+1}\mathbf 1_{\mathrm B(x,L)}(X_k (t))} \Big) \nonumber\\
\; &= \; 
\prod_{k\ge 1} \Big( \ed^{\frac{\delta}{2L+1}} \mathbf{p}_L(t,X_k(0) - x) + \big( 1 - \mathbf{p}_L(t,X_k(0) - x) \big) \Big) \nonumber\\
\; &= \; 
\prod_{z\in \Z} \Big(  \ed^{\frac{\delta}{2L+1}}  \mathbf{p}_L(t,z - x) + \big(1 - \mathbf{p}_L(t,z - x) \big) \Big)^{\eta_z} \nonumber\\
\; &\le \; 
\exp \Big( \big( \ed^{\delta/(2L+1)} - 1 \big) \sum_{z\in \Z} \mathbf{p}_L(t,z-x) \eta_z \Big)\nonumber\\
\; &= \; 
\exp \Big(  \big( \ed^{\delta/(2L+1)} - 1\big) (2L+1) \langle \overline{\eta}(t)\rangle_{x,L}  \Big),
\label{brol final concentration inequ Liggett}
\end{align}  
where the last expression follows from \eqref{mean in terms of kernels}.

Let us now come back to \eqref{first thing in proof concentration inequality ssep}. 
Assuming that $\delta/(2L+1) \le 1$, 
we conclude, using \eqref{brol final concentration inequ Liggett} and expanding $\ed^{\delta/(2L+1)} - 1$ in first order in $\delta/(2L+1)$, that we can find a constant $\Const < + \infty$ such that
\begin{equation*}
\ed^{-\delta \langle \overline{\eta}(t) \rangle_{x,L}}  \cdot E^\eta \Big( \ed^{\delta \langle \eta (t) \rangle_{x,L} } \Big)
\; \le \; 
\ed^{\frac{\Const \delta^2}{2L+1} \langle \overline \eta(t) \rangle_{x,L}}.
\end{equation*}
Therefore 
\begin{equation*}
P^{\eta} \big(\langle \eta (t) - \overline{\eta}(t) \rangle_{x,L}  \ge a \big) 
\; \le \;
\ed^{-a \delta + \frac{\Const \langle \overline{\eta}(t)\rangle_{x,L}}{2L+1} \delta^2} \qquad \text{for} \qquad 0 \,  \le \,  \delta  \, \le \,  2L+1.  
\end{equation*}
This inequality is optimized for $\delta = \min \{ \frac{a (2L+1)}{2 \Const \langle \overline \eta (t) \rangle_{x,L}} , 2L+ 1 \}$.
If $\delta =  \frac{a (2L+1)}{2 \Const \langle \overline \eta (t) \rangle_{x,L}}$, we find 
\begin{equation}\label{proof large deviation case 1}
P^{\eta} \big(\langle \eta (t) - \overline{\eta}(t) \rangle_{x,L}  \ge a \big) 
\; \le \;
\ed^{- \frac{a^2 (2L+1)}{4 \Const \langle \overline{\eta}(t)\rangle_{x,L}}}
\; \le \; 
\ed^{- \frac{a^2 (2L+1)}{4 \Const}}, 
\end{equation}
as $ \langle \overline{\eta} (t)\rangle_{x,L} \le 1$ for the simple exclusion process. 
If instead $\delta = 2L+1$, we obtain
\begin{equation*}
P^{\eta} \big(\langle \eta (t) - \overline{\eta}(t) \rangle_{x,L}  \ge a \big) 
\; \le \;
\ed^{- a (2L+1)( 1 - \frac{\Const \langle \overline{\eta} (t) \rangle_{x,L}}{a})}.
\end{equation*}
Because in this case $2L+1 \le  \frac{a (2L+1)}{2 \Const \langle \overline \eta (t) \rangle_{x,L}} $, this implies 
\begin{equation}\label{proof large deviation case 2}
P^{\eta} \big(\langle \eta (t) - \overline{\eta}(t) \rangle_{x,L}  \ge a \big) 
\; \le \;
\ed^{- a (2L+1)/2}.
\end{equation}
The Lemma is obvious for $a > 1$ and, for $a \le 1$, \eqref{proof large deviation case 1} is always larger than \eqref{proof large deviation case 2} as soon as $\Const \ge 1/2$. 
This gives the claim. 
\end{proof}

The next two lemmas furnish a control on the solution of the heat equation. 
The first of these makes precise the fact that, after a time $t$, the solution at $x$ is well approximated by the empirical density of the initial profile in a box of size $(\gamma t)^{1/2}$ around $x$.
\begin{Lemma}\label{Lemma: estimate heat equation}
There exists a constant $\Const < + \infty$ such that, 
given $\eta \in \{ 0,1\}^\Z$, $M,L \in \N$, $x\in \Z$ and $t \ge 0$, 
\begin{equation*}
% HYPOTHESE NON-NECESSAIRE: 
%\gamma t \; \ge \; L^3 \qquad \Rightarrow \qquad 
\langle \overline{\eta}(t) \rangle_{x,M} \; \le \; \bigg( 1 + \frac{\Const L^2}{ \gamma t} \bigg) \sup \big\{ \langle \eta \rangle_{x,r }: r \ge L \big\}.
\end{equation*}
\end{Lemma}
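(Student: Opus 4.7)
The plan is to start from the identity $\langle \overline\eta(t)\rangle_{x,M} = \sum_{z\in\Z} p_M(t,x-z)\eta_z$, which follows from \eqref{mean in terms of kernels} with $L$ replaced by $M$. The starting observation is that $p_M(t,\cdot)$ is a symmetric probability distribution on $\Z$ that is nonincreasing in $|\cdot|$: symmetry comes from that of $p(t,\cdot)$, and monotonicity follows from the identity
\[
p_M(t,y+1) - p_M(t,y) \; = \; \frac{1}{2M+1}\big[p(t,y+M+1) - p(t,y-M)\big],
\]
combined with $p(t,\cdot)$ itself being symmetric and nonincreasing in $|\cdot|$. The key step is to write any such kernel as a convex combination of normalized indicators of symmetric intervals: setting $\phi_R(z) := (2R+1)^{-1}\mathbf{1}_{|z|\leq R}$ and $\nu(R) := (2R+1)\big[p_M(t,R) - p_M(t,R+1)\big] \geq 0$, a direct telescoping computation shows that $\nu$ is a probability measure on $\N$ and $p_M(t,\cdot) = \sum_{R\geq 0}\nu(R)\phi_R$. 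Substituting yields the representation
\[
\langle \overline\eta(t)\rangle_{x,M} \; = \; \sum_{R\geq 0} \nu(R)\,\langle \eta \rangle_{x,R}.
\]

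Let $K := \sup\{\langle\eta\rangle_{x,r} : r\geq L\}$. I split this sum at $R=L$. The contribution from $R\geq L$ is at most $K\sum_{R\geq L}\nu(R) \leq K$. For $R<L$, the nonnegativity of $\eta$ implies that $R\mapsto (2R+1)\langle\eta\rangle_{x,R}$ is nondecreasing, so $\langle\eta\rangle_{x,R} \leq (2L+1)K/(2R+1)$; combined with the identity $\nu(R)/(2R+1) = p_M(t,R) - p_M(t,R+1)$, a second telescoping gives
\[
\sum_{R<L}\nu(R)\,\langle\eta\rangle_{x,R} \;\leq\; K(2L+1)\big[p_M(t,0) - p_M(t,L)\big].
\]

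The proof then reduces to the heat-kernel estimate $p_M(t,0) - p_M(t,L) \leq \Const\, L/(\gamma t)$. Writing this as a telescoping sum over $y=0,\dots,L-1$ of $p_M(t,y) - p_M(t,y+1)$, and using the identity displayed above, it is enough to establish the uniform discrete-gradient bound $|p(t,y+1) - p(t,y)| \leq \Const/(\gamma t)$ for all $y\in\Z$. I would derive this from the Bessel identity $I_{n-1}(x) - I_{n+1}(x) = (2n/x)I_n(x)$ applied to $p(t,y) = \ed^{-2\gamma t}I_y(2\gamma t)$, which translates into $p(t,y-1) - p(t,y+1) = (y/(\gamma t))\,p(t,y)$; combined with the Gaussian bound $p(t,y)\leq \Const\,(\gamma t)^{-1/2}\exp(-\const\, y^2/(\gamma t))$ and the elementary optimization $\sup_{y\geq 0} y\,\ed^{-\const\, y^2/(\gamma t)} = O(\sqrt{\gamma t})$, one obtains the desired bound (a Fourier-analytic derivation via $|\ed^{-i\theta}-1|\leq|\theta|$ and $1-\cos\theta\geq \const\theta^2$ would also work). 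In the regime $L\gtrsim \sqrt{\gamma t}$, where $\Const\, L/(\gamma t)$ is weaker than the trivial estimate $p_M(t,0) - p_M(t,L)\leq p_M(t,0)\leq \Const/\sqrt{\gamma t}$, the latter produces the same final correction $K \cdot O(L^2/(\gamma t))$, and the two bounds combine to give the statement.

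The main obstacle is the uniform gradient estimate on the heat kernel. It is precisely this diffusive-smoothing effect (the rate $1/(\gamma t)$ rather than the pointwise $1/\sqrt{\gamma t}$) that upgrades the naive correction of order $L/\sqrt{\gamma t}$ into the sharper $L^2/(\gamma t)$ factor appearing in the lemma.
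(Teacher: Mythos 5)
Your proof is correct and follows essentially the same route as the paper: the convex-combination representation $p_M(t,\cdot)=\sum_R\nu(R)\phi_R$ is the paper's ``plateau decomposition'' \eqref{plateau decomposition of P kernel} in different clothing, the split at $R=L$ matches the paper's treatment of $\langle\eta\rangle_{x,z}-R$, and the reduction to $p_M(t,0)-p_M(t,L)\le\Const L/(\gamma t)$ via the discrete-gradient bound on $p(t,\cdot)$ is identical. The only cosmetic difference is that you offer a Bessel-function derivation of $|p(t,y)-p(t,y+1)|\le\Const/(\gamma t)$ as an alternative to the Fourier computation the paper uses (which you also mention); both are fine, and your extra remark about the regime $L\gtrsim\sqrt{\gamma t}$ is harmless but not actually needed, since the Fourier bound $|p(t,y)-p(t,y+1)|\le\Const'/(\gamma t)$ already holds uniformly in $t>0$.
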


\begin{proof}
Let us start by quoting a property of the heat equation. 
Let $f:\Z\mapsto \R$ be such that $f(x)=f(-x)$ for all $x\in \Z$, and such that, for all $x\ge 0$, it holds that $f(x) - f(x+1) \ge 0$. 
For $x\in \Z$ and $t\ge 0$, let also $f(t,x)$ be the solution of the initial value problem $f(0,x) = f(x)$ and $\partial_t f(t,x) = \gamma \Delta f(t,x)$. 
It is verified that, for all $t> 0$, we still have $f(t,x) = f(t,-x)$ for all $x\in \Z$, and $f(t,x) - f(t,x+1) \ge 0$ for all $x\ge 0$. 

This has the following consequence. 
It is seen from the definition \eqref{definition of p L kernels} of $p_M$ that $p_M(0,x) = p_M(0,-x)$ for all $x\in \Z$, that $p_M(0,x) - p_M(0,x+1)\ge 0$ for all $x\ge 0$, 
and that $p_M$ solves the heat equation $\partial_t p_M(t,x) = \gamma \Delta p_M(t,x)$ for $x\in \Z$ and $t\ge 0$.  
Therefore, for any function $u: \Z \rightarrow \R$, it holds that 
\begin{equation}\label{plateau decomposition of P kernel}
\sum_{z\in \Z} p_M(t,z-x) u(z) \; = \; \sum_{z\ge 0} \big( p_M(t,z) - p_M(t,z+1) \big)  (2z+1) \langle u \rangle_{x,z}, 
\end{equation}
where $p_M(t,z) - p_M(t,z+1) \ge 0$ for all $z\ge 0$. 

Let us now write for simplicity 
\begin{equation*}
R := \sup \big\{ \langle \eta \rangle_{x,r } : r \ge L \big\}.
\end{equation*}
As shown in \eqref{mean in terms of kernels}, it holds that $\langle \overline{\eta}(t) \rangle_{x,M} = \sum_{z\in\Z} p_M(t,z-x) \eta_z$,
so that, using \eqref{plateau decomposition of P kernel}, we find
\begin{equation}\label{first expression for the mean}
\langle \overline{\eta}(t) \rangle_{x,M} 
\; = \; 
\langle \overline{\eta}(t) \rangle_{x,M}  - R + R
\; = \;
 \sum_{z\ge 0} \big( p_M(t,z) - p_M(t,z+1) \big)  (2z+1) \big(\langle \eta \rangle_{x,z} - R\big) \; + \; R.
\end{equation}
If $z\ge L$, then 
\begin{equation*}
\langle \eta \rangle_{x,z} - R \; = \; \langle \eta \rangle_{x,z} - \sup \big\{ \langle \eta \rangle_{x,r } : r \ge L \big\} \; \le \;  0,
\end{equation*}
while, if $z < L$, then still
\begin{equation*}
(2z + 1) \big( \langle \eta \rangle_{x,z} - R \big)
\; \le \; 
(2z+1)  \langle \eta \rangle_{x,z}
\; = \; 
\sum_{y\in \mathrm B(0,z)} \eta_{x+y}
\;\le\;  
( 2 L + 1) R.
\end{equation*}
We see that we already obtain the result if $L=0$, so that we can further assume $L\ge 1$. 
Inserting these two estimates in \eqref{first expression for the mean}, we find
\begin{equation}\label{an expression for the mean at time t}
\langle \overline{\eta}(t) \rangle_{x,L} 
\; \le \; 
(2L+1)R\sum_{z=0}^{ L-1} \big( p_M(t,z) - p_M(t,z+1) \big) \; + \; R 
\; = \; 
\Big\{ 1 + (2L+1) \big( p_M(t,0) - p_M(t,L) \big) \Big\} R.
\end{equation}

For any $z\in \Z$ and $t>0$, expanding $p(t,z) = \ed^{\gamma \Delta t} \delta_0 (z)$ in the Fourier variables, and writing $\omega (\xi) = 2 (1 - \cos \xi)$, we find
$$ 
|p(t,z) - p(t,z+1)| \; = \; \Big| \int_{-\pi}^\pi \ed^{-\omega(\xi) \gamma t} \big(\ed^{i\xi z} - \ed^{i\xi (z+1)} \big) \, \frac{\dd \xi}{2\pi} \Big| \;\le\; \Const \int_\R \ed^{-c\xi^2 \gamma t} |\xi| \, \dd \xi \; \le \;  \frac{\Const'}{\gamma t},
$$
for some $c> 0$ and $\Const < \Const' < + \infty $.
%A direct computation in Fourier space shows that there exists a constant $\Const < + \infty$ such that, for any $z\in\Z$, we have
%\begin{equation*}
%|p(z,t) - p(z+1,t)| \; \le \; \frac{\Const}{\gamma t}.
%\end{equation*}
Therefore
\begin{equation*}
p_M(t,0) - p_M(t,L) \; \le \; \frac{\Const L}{\gamma t}.
\end{equation*}
Inserting this estimate in \eqref{an expression for the mean at time t} furnishes the claim. 
\end{proof}

Our last lemma gives a bound that does not require to wait a time $\gamma t \ge L^3$ to hold:
\begin{Lemma}\label{Lemma: heat equation short times}
Given $\eta \in \{ 0,1\}^\Z$, $x\in \Z$, $t\ge 0$ and $L\in \N$, it holds that 
\begin{equation*}
\langle \overline{\eta}(t) \rangle_{x,L} \; \le \; \sup \big\{ \langle \eta \rangle_{y,r} : r\ge L, y \in \mathrm B(x,L) \big\}.
\end{equation*}
\end{Lemma}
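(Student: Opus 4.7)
The plan is to decompose $\overline{\eta}(t)$ with respect to the level sets of the symmetric decreasing heat kernel $p(t,\cdot)$, and then to handle each level by a small combinatorial identity between boxes.

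First, starting from $\overline{\eta}_y(t) = \sum_z p(t,y-z)\eta_z$ and rewriting by Abel summation as in the proof of Lemma \ref{Lemma: estimate heat equation}, I would obtain
\begin{equation*}
\overline{\eta}_y(t) \; = \; \sum_{r \ge 0}\big(p(t,r)-p(t,r+1)\big)(2r+1)\,\langle \eta \rangle_{y,r}.
\end{equation*}
This is a convex combination, since $\sum_{r \ge 0}(p(t,r)-p(t,r+1))(2r+1) = \sum_k p(t,k) = 1$. Averaging over $y \in \mathrm B(x,L)$ and setting
\begin{equation*}
M \; := \; \sup\{\langle \eta \rangle_{y,r'} : r' \ge L,\, y \in \mathrm B(x,L)\}, \qquad A_r \; := \; \frac{1}{2L+1}\sum_{y \in \mathrm B(x,L)}\langle \eta \rangle_{y,r},
\end{equation*}
gives $\langle \overline{\eta}(t)\rangle_{x,L} = \sum_{r \ge 0}(p(t,r)-p(t,r+1))(2r+1)\,A_r$, so it suffices to show $A_r \le M$ for every $r \ge 0$.

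For $r \ge L$ the bound is immediate, as each individual summand $\langle \eta \rangle_{y,r}$ with $y \in \mathrm B(x,L)$ satisfies $\langle \eta \rangle_{y,r} \le M$. The delicate case is $0 \le r < L$. Exchanging sums yields
\begin{equation*}
A_r \; = \; \frac{1}{(2L+1)(2r+1)}\sum_w \eta_w \,\big|\mathrm B(x,L) \cap \mathrm B(w,r)\big|.
\end{equation*}
At this stage I would use the elementary identity
\begin{equation*}
\big|\mathrm B(x,L) \cap \mathrm B(w,r)\big| \; = \; \big|\mathrm B(x,r) \cap \mathrm B(w,L)\big|,
\end{equation*}
which follows from the involution $y \mapsto x+w-y$ on $\Z$ that bijectively maps one intersection to the other. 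Swapping the roles of the two balls and re-exchanging sums gives
\begin{equation*}
A_r \; = \; \frac{1}{2r+1}\sum_{y \in \mathrm B(x,r)}\langle \eta \rangle_{y,L}.
\end{equation*}
Since $r < L$ implies $\mathrm B(x,r) \subseteq \mathrm B(x,L)$, every summand is bounded by $M$, and therefore $A_r \le M$.

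Summing over $r$ then gives $\langle \overline{\eta}(t) \rangle_{x,L} \le M$, which is the claim. The main obstacle is recognizing the switcheroo identity $|\mathrm B(x,L)\cap \mathrm B(w,r)|=|\mathrm B(x,r)\cap \mathrm B(w,L)|$, as it is what turns the hard case $r<L$ into an average taken over a \emph{sub-box} of $\mathrm B(x,L)$. Once it is in hand, the argument requires no quantitative estimate on the heat kernel, which is precisely what allows the bound to hold for every $t \ge 0$, without the correction factor $(1+\Const L^{2}/(\gamma t))$ that appears in Lemma \ref{Lemma: estimate heat equation}.
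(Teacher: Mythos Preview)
Your proof is correct and follows essentially the same approach as the paper: both decompose the heat kernel into its level sets, bound the scales $r\ge L$ directly, and reduce the scales $r<L$ to averages of $\langle\eta\rangle_{\cdot,L}$ over centers inside $\mathrm B(x,L)$. The paper packages the small scales into a single remainder kernel $\tilde p$ and uses a direct change of variables where you invoke the symmetry $|\mathrm B(x,L)\cap\mathrm B(w,r)|=|\mathrm B(x,r)\cap\mathrm B(w,L)|$, but unwrapping either computation yields the same weights $\tilde p(t,z)=p(t,|z|)-p(t,L)$ on the shifted centers $x+z$.
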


\begin{proof}
For $L=0$, the lemma follows from Lemma \ref{Lemma: estimate heat equation}. We assume $L\ge 1$. 
To simplify writings, let us write 
\begin{equation*}
R \; := \; \sup \big\{ \langle \eta \rangle_{y,r} : r\ge L, y \in \mathrm B(x,L) \big\}.
\end{equation*}
We decompose 
\begin{equation*}
p(t,\cdot) \; = \; \tilde{p} (t,\cdot) + \sum_{k\ge L} p_k(t,\cdot),
\end{equation*}
with 
\begin{equation*}
p_k(t,\cdot) \; = \; \big(p(t,k) - p(t,k+1) \big)\mathbb I_{\mathrm B(0,k)}(\cdot), \qquad k \ge L,
\end{equation*}
and 
\begin{equation*}
\tilde p (t,z) \; =\;  p(t,z) - 
\sum_{k\ge L} p_k(t,z) \quad \text{if} \quad z\in \mathrm B(0,L-1) 
\qquad \text{and} \qquad 
\tilde p(t,z) \; = \; 0 \quad \text{otherwise}.
\end{equation*}

Let $y \in \mathrm B(x,L)$.
It holds that
\begin{equation*}
\overline{\eta}_y(t) 
\; = \; 
\sum_{z\in\Z} p(t,y-z) \eta_z
\; = \; 
\sum_{z\in \Z} \tilde{p} (t,y-z) \eta_z + \sum_{k\ge L} \sum_{z\in \Z} p_k(t,y-z) \eta_z.
\end{equation*}
Since, for $k\ge L$, 
\begin{equation*}
\sum_{z\in \Z} p_k(t,y-z) \eta_z 
\; = \; 
\big(p(t,k) - p(t,k+1) \big) \sum_{z\in \mathrm B(0,k)} \eta_{y+z}
\; = \; 
\big(p(t,k) - p(t,k+1) \big) (2k+1) \langle \eta \rangle_{y,k},
\end{equation*}
and since we assume $y \in \mathrm B(x,L)$, our hypotheses imply $\langle \eta \rangle_{y,k} \le R$, so that 
\begin{equation*}
\overline{\eta}_y(t) 
\; \le \; 
\sum_{z\in \mathrm B(0,L-1)} \tilde{p} (t,z) \eta_{z+y} + \sum_{k\ge L} \big(p(t,k) - p(t,k+1) \big) (2k + 1) R.
\end{equation*}

Therefore
\begin{align*}
\langle \overline{\eta}(t) \rangle_{x,L} 
\; &= \; 
\frac{1}{|\mathrm B(x,L)|} \sum_{y\in \mathrm B(x,L)}  \overline{\eta}_y(t) \\
\; &\le \; 
\sum_{z\in \mathrm B(0,L-1)} \tilde{p} (t,z) \frac{1}{|\mathrm B(x,L)|} \sum_{y\in \mathrm B(x,L)}  \eta_{z+y} + \sum_{k\ge L} \big(p(t,k) - p(t,k+1) \big) (2k + 1) R.
\end{align*}
If $z \in \mathrm B(0,L-1)$, we have $x+z \in \mathrm B(x,L)$, and so by hypothesis, 
\begin{equation*}
\frac{1}{|\mathrm B(x,L)|} \sum_{y\in \mathrm B(x,L)}  \eta_{z+y} \; = \; \frac{1}{|\mathrm B(0,L)|} \sum_{w\in \mathrm B(0,L)} \eta_{x+z+w} \; \le \; R.
\end{equation*}
We thus conclude that 
\begin{equation*}
\langle \overline{\eta}(t) \rangle_{x,L} 
\; \le \;
\Big\{ \sum_{z\in \mathrm B(0,L-1)} \tilde{p} (t,z) + \sum_{k\ge L} \big(p(t,k) - p(t,k+1) \big) (2k + 1) \Big\} R 
\; = \; R,
\end{equation*}
which is the claim.
\end{proof}

%SUBSECTION
\subsection{Proof of Propositions \ref{Proposition: good set at a later time} and \ref{Proposition: conservation of good sets}}

\begin{proof}[Proof of Proposition \ref{Proposition: good set at a later time}]
We have
\begin{equation}\label{first estimate in proof good set at a later time}
P^{\eta} \big(x \notin G(\eta(t),J) \big) \; \le \; \sum_{J' \ge J} P^{\eta} \big( \langle \eta (t) \rangle_{x,J'} \, > \, (1+ \epsilon_{J}) \rho \big).
\end{equation} 
By Lemma \ref{Lemma: estimate heat equation}, and since by hypothesis $\gamma t \ge L^3$ and $x\in G(\eta,L)$, we find for all $J'\in \N$ that
\begin{equation*}%\label{first technical in proof good set}
\langle \overline{\eta}(t) \rangle_{x,J'} 
\; \le \; 
\big( 1 + \Const /L \big) \sup \big\{ \langle \eta \rangle_{x,r} : r \ge L  \big\}
\; \le \; 
\big( 1 + \Const /L \big) (1 + \epsilon_L) \rho. 
\end{equation*}
Therefore, thanks to the hypothesis that $L (\epsilon_J - \epsilon_L)$ is large enough, we get 
\begin{equation}\label{second technical in proof good set}
(1+ \epsilon_{J}) \rho - \langle \overline{\eta}(t) \rangle_{x,J'} 
\; \ge \; 
\big( \epsilon_{J} - \epsilon_L - 2\Const / L \big) \rho 
\; \ge \; 
\frac{\rho}{2} (\epsilon_J - \epsilon_L) 
\; \ge \;  0.
\end{equation}
This last inequality allows us to use the concentration estimate stated in Lemma \ref{Lemma: large deviations}:
\begin{align}
P^{\eta} \Big( \langle \eta (t) \rangle_{x,J'} \, \ge \, (1+ \epsilon_{J}) \rho \Big)
\; &= \; 
P^{\eta} \Big( \big\langle \eta (t) - \overline{\eta}(t) \big\rangle_{x,J'} \, \ge \, (1+ \epsilon_{J}) \rho - \langle \overline{\eta}(t) \rangle_{x,J'} \Big) 
\nonumber\\
\; &\le \; P^{\eta} \Big( \big\langle \eta (t) - \overline{\eta}(t) \big\rangle_{x,J'} \, \ge \, \frac{\rho}{2} (\epsilon_J - \epsilon_L) \Big) 
\nonumber\\
\; &\le \; \exp \Big( -\const  J' \rho^2 (\epsilon_J - \epsilon_L)^2 \Big),
\label{large deviations in proof good set at a later time}
\end{align}
where we have used \eqref{second technical in proof good set} to get the first inequality. 
Inserting \eqref{large deviations in proof good set at a later time} in \eqref{first estimate in proof good set at a later time}, we obtain% for some $\Const < + \infty$ and $\const > 0$, that
\begin{equation*}
P^{\eta} \big(x \notin G(\eta(t),J) \big) \; \le \; \sum_{J' \ge J} \ed^{- c J' \rho^2 (\epsilon_J - \epsilon_L)^2}
\; \le \;
\Const \, \frac{\ed^ {-c J \rho^2 (\epsilon_J - \epsilon_L)^2}}{\rho^2 (\epsilon_J - \epsilon_L)^2},
\end{equation*}
since $\rho^2 (\epsilon_J - \epsilon_L)^2 \le 1$. This is the claim.
\end{proof}

\begin{proof}[Proof of Proposition \ref{Proposition: conservation of good sets}]
We write
\begin{equation*}
P^{\eta} \big( \langle \eta(t) \rangle_{x,L} \ge (1+\epsilon)\rho \big) 
\; = \;
P^{\eta} \big( \langle \eta(t) \rangle_{x,L} - \langle \overline{\eta}(t) \rangle_{x,L}  \ge (1+\epsilon)\rho - \langle \overline{\eta}(t) \rangle_{x,L} \big).
\end{equation*}
It follows from the hypotheses and from Lemma \ref{Lemma: heat equation short times} that 
\begin{equation*}
(1+\epsilon)\rho - \langle \overline{\eta}(t) \rangle_{x,L}  \; \ge \; (1+\epsilon)\rho - (1 + \epsilon_L) \rho \; = \; (\epsilon - \epsilon_L) \rho, 
\end{equation*}
so that, by Lemma \ref{Lemma: large deviations}, we find indeed $P^{\eta} \big( \langle \eta(t) \rangle_{x,L} \ge (1 + \epsilon)\rho \big) \,\le \, \exp (-c L \rho^2 (\epsilon -\epsilon_L)^2)$.
\end{proof}

% SECTION
\section{Drift for large $\gamma$}\label{Section: Drift to the right}

We here prove 
\begin{Theorem}\label{Theorem: lim inf to the right}
Assume that the drift condition \eqref{drift condition} holds. 
Then, there exists $v_*>0$ so that, for $\gamma$ large enough, there exists $v(\gamma) \ge v_*$ so that 
\begin{equation*}
\liminf_{t\to \infty} \frac{X_t}{t} \; = \; v (\gamma) \qquad P_0-a.s.
\end{equation*}
\end{Theorem}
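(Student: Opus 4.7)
I would follow a multi-scale renormalization analysis in the spirit of Bricmont--Kupiainen. Fix $0 < v_* < E_0(X_1) = \rho(2\alpha-1) + (1-\rho)(2\beta-1)$, introduce a sequence of length scales $L_k$ with $L_{k+1} = \lfloor L_k^a \rfloor$ for some exponent $a \ge 3$, associated time scales $T_k := L_k/v_*$, and a slowly decreasing tolerance sequence $\epsilon_{L_k}$ (e.g.\@ $\epsilon_L = c_0/\log L$) chosen so that Propositions~\ref{Proposition: good set at a later time} and~\ref{Proposition: conservation of good sets} can be applied at every scale. Goodness of the environment at scale $L_k$ is measured by the sets $G(\eta, L_k)$ from Section~\ref{Section: Dissipation of traps}.

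\textbf{Inductive claim and base case.} The key estimate to prove by induction on $k$ is
\begin{equation*}
P^\eta_{n,x}\bigl(\mathcal{B}_k(n,x)^c\bigr) \;\le\; p_k \qquad \text{whenever } x \in G\bigl(\eta(n), L_k\bigr),
\end{equation*}
where $\mathcal{B}_k(n,x)$ is a ballistic event asserting that $X_{n+m} - x \ge v_* m$ for all $m \in [T_{k-1}, T_k]$, and $p_k$ decays super-polynomially, say $p_k \le \exp(-L_k^\kappa)$ for some $\kappa > 0$. For the base case $k=0$, taking $\gamma$ large enough compared to $L_0$ makes $\sqrt{\gamma T_0} \gg L_0$, so the exclusion process mixes over scales much larger than what the walker can explore in time $T_0$; the conditional drift of each step is then uniformly close to $E_0(X_1) > v_*$, and Hoeffding's inequality applied to the martingale $X_n - \sum_{k<n}(2\omega(k,X_k)-1)$ yields $p_0 \le \exp(-c L_0)$ uniformly over $L_0$-good initial environments.

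\textbf{Inductive step.} To go from scale $k$ to scale $k+1$, partition $[n, n+T_{k+1}]$ into $N_{k+1} := T_{k+1}/T_k$ consecutive windows of length $T_k$ and require a scale-$k$ ballistic excursion on each. At the start of the $j$-th window the walker sits at some position $\xi_j$ with $\xi_j \ge j L_k$; I need to check that $\xi_j \in G\bigl(\eta(n + j T_k), L_k\bigr)$. Since $a \ge 3$ and $\gamma$ is large, one has $\gamma T_k \ge L_{k+1}^3$, so Proposition~\ref{Proposition: good set at a later time} applied with $L = L_{k+1}$ and $J = L_k$ (and the tolerance condition $L_{k+1}(\epsilon_{L_k} - \epsilon_{L_{k+1}}) \ge \Const$ fulfilled by the choice of $\epsilon_L$) guarantees that $\xi_j$ is $L_k$-good at time $n + jT_k$ with probability $\ge 1 - q_k$, where $q_k$ decays super-polynomially. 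Proposition~\ref{Proposition: conservation of good sets} provides the complementary short-time control inside each window. Combining the inductive hypothesis on the $N_{k+1}$ sub-excursions with a union bound yields $p_{k+1} \lesssim N_{k+1}(p_k + q_k)$, which closes the induction thanks to the super-polynomial decay.

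\textbf{Conclusion and main obstacle.} Averaging over the initial environment (under which the origin is $L_k$-good with probability $\ge 1 - e^{-c L_k}$ by standard concentration for $\pi_\rho$) gives a summable annealed bound, and Borel--Cantelli yields $X_{T_k}/T_k \ge v_*$ eventually, $P_0$-almost surely. Using ellipticity to rule out large backward excursions at intermediate times $t \in [T_{k-1}, T_k]$ upgrades this to $\liminf_{t\to\infty} X_t/t \ge v_*$ a.s.; the almost sure constancy $\liminf X_t/t = v(\gamma)$ then follows from a zero--one law on the tail event $\{\liminf X_t/t \le c\}$, combined with the ergodicity of the simple exclusion process at equilibrium. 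The main obstacle is the simultaneous calibration of the scale ratio $a$, the tolerance sequence $(\epsilon_{L_k})$, and the threshold on $\gamma$, so that the tolerance condition of Proposition~\ref{Proposition: good set at a later time} holds at every step, the diffusive equilibration time $L_{k+1}^3/\gamma$ is dominated by the ballistic time $T_k$, and the decay of $p_k$ is preserved under the union bound over the $N_{k+1}$ sub-excursions; making all these conditions compatible is where the bulk of the technical work lies.
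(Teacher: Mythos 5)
Your high-level architecture matches the paper's: a multi-scale renormalization à la Bricmont--Kupiainen, controlled via Proposition~\ref{Proposition: good set at a later time}, followed by a Borel--Cantelli argument; and your tolerance choice $\epsilon_L\sim 1/\log L$ agrees with \eqref{epsilon sequence}. However, there is a structural flaw in the calibration of scales that would cause the induction to fail.

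You introduce a single sequence $L_k$ with $L_{k+1}\approx L_k^a$, $a\ge 3$, and set $T_k:=L_k/v_*$, so that the ``goodness scale'' and the time/spatial scale coincide up to a constant. You then propose to apply Proposition~\ref{Proposition: good set at a later time} with $L=L_{k+1}$, $J=L_k$, and waiting time of order $T_k$. But that proposition requires $\gamma t\ge L^3$, which in your parameterization reads $\gamma T_k\ge L_{k+1}^3$, i.e.\@ $\gamma L_k/v_*\ge L_k^{3a}$, i.e.\@ $\gamma\gtrsim L_k^{3a-1}$. Since $\gamma$ is fixed and $L_k\to\infty$, this fails from some $k$ on. The paper avoids this by maintaining \emph{two} scales: the time scale $t_n$ (with $t_{n+1}\approx t_n^2$) and a much smaller trap scale $\phi_{t_n}=t_n^{1/100}$ (see \eqref{phi function}). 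Goodness is required only at scale $\phi_{t}$, so the condition $\gamma t\ge \phi_{t'}^3$ in the inductive step becomes $\gamma t\ge t'^{3/100}\approx t^{6/100}$, which is satisfied by the elapsed time alone once $t$ is large, with $\gamma$ held fixed. This separation of the trap scale from the ballistic scale is the essential mechanism that closes the induction, and your proposal does not have it.

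A second, related issue: your inductive hypothesis asks only for $x\in G(\eta(n),L_k)$ at the single starting point $x$. The definition \eqref{definition good set} controls the empirical density at scales $\ge L_k$ around $x$, but says nothing about the local density near the sites $y\in\mathrm B(x,T_k)$ that the walker may visit within the window; a local pocket of particles near such a $y$ can still slow the walk. Lemma~\ref{Lemma: Initial stpe large gamma} and Proposition~\ref{Proposition: Renormalization large gamma} therefore demand $y\in G(\eta,\phi_T)$ \emph{for every} $y\in\mathrm B(0,T)$, i.e.\@ goodness at a small scale uniformly over the reachable box.

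Two smaller remarks. In the base case, the quenched martingale $X_n-\sum_{k<n}(2\omega(k,X_k)-1)$ has bounded increments and satisfies Azuma, but this alone does not give a lower bound on $X_n$, because the compensator $\sum_{k<n}(2\omega(k,X_k)-1)$ takes values in $\{2\alpha-1,2\beta-1\}$ along the walker's trajectory and is not a priori close to $n\,E_0(X_1)$: the walker could linger on occupied sites. The paper instead proves the annealed step estimate $P^\eta(\eta_z(1)=1)\le\rho+\delta$ (via Lemma~\ref{Lemma: estimate heat equation}) and couples each increment from below with an i.i.d.\@ $Y_k$ having $P(Y_k=1)>1/2$; your argument needs such an annealed domination, not just the quenched martingale. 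Finally, invoking a ``zero--one law on the tail event $\{\liminf X_t/t\le c\}$'' is not immediate here: this event depends on the joint trajectory of walker and environment, and the standard $0$--$1$ laws for RWRE do not transfer automatically to this setting. What the section actually establishes is the lower bound $\liminf_{t\to\infty}X_t/t\ge\mathsf v/2>0$ a.s.; the identification of the a.s.\@ constant $v(\gamma)$ is deferred to the renewal construction of Section~\ref{Section: Proof of law of large numbers}.
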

\noindent
Remark that the ellipticity condition \eqref{ellipticity} is not required. 

Before we proceed to prove Theorem \ref{Theorem: lim inf to the right}, let us fix some parameters.  
First, for the whole section, we assume that $\alpha$, $\beta$ and $\rho$ are chosen so that the drift condition \eqref{drift condition} holds. 
Next, from now on, we assume that the sequence $(\epsilon_L)_{L\ge 0}$ introduced in Section \ref{Section: Dissipation of traps} to control the excess of density in boxes of size $L$, is given by
\begin{equation}\label{epsilon sequence}
\epsilon_L \; = \; \frac{1}{ 1 + \ln (L+1)} \qquad \text{for} \qquad L \in \N.
\end{equation}
With this choice, the sequence $(\epsilon_L)_{L\ge 0}$ satisfies two useful requirements:
First, in view of the definition \eqref{definition good set} of good sites, $\epsilon_L \gg L^{-1/2}$ is needed for a given site to be good with high probability for large $L$; 
this is the case with \eqref{epsilon sequence}, and the probability that $x\notin G(\eta,L)$ decays faster with $L$ than $\ed^{-c_\tau L^\tau}$ for any $\tau < 1$. % (that is much larger than the standard deviation of the exclusion process)
Second, we will apply Proposition \ref{Proposition: good set at a later time} with $J$ and $L$ going together to infinity, in the ratio $L \sim J^2$. 
The bound \eqref{result Proposition good set at a later time} in Proposition \ref{Proposition: good set at a later time} is only meaningful for $(\epsilon_J - \epsilon_L)^2 \gg J^{-1}$; 
again $(\epsilon_L)_{L\ge 0}$ defined by \eqref{epsilon sequence} satisfies this condition, and the bound \eqref{result Proposition good set at a later time} becomes $ \mathcal O (\ed^{-c_\tau J^{\tau}})$ for any $\tau < 1$. 
%and we need to keep $\epsilon_J - \epsilon_L$ as large as possible.
%The sequence $(\epsilon_L)_{L\ge 0}$ defined by \eqref{epsilon sequence} converges very slowly to zero, satisfying thus these two requirements. 

Finally, we define a sequence $(\phi_L)_{L\ge 0}$, where $\phi_L$ will represent the size of the traps in a box of size $L$. 
Intuitively, typical regions of anomalous density in a box of size $L$ are of size $\ln L$. 
Nevertheless, for our estimates, we found it convenient to overestimate their size; we set 
\begin{equation}\label{phi function}
\phi_L \; = \;  L^{1/100}   \qquad \text{for} \qquad L \in \N.
\end{equation}
In the sequel, we will tacitly use the bound $|X_t - X_s| \le t-s$, valid for all $0 \le s \le t$.

% SUBSECTION
\subsection{Outline of the proof}\label{subsection Outline of the proof large gamma}
The proof of Theorem \ref{Theorem: lim inf to the right} is divided into three steps. 

First, we show that, given any (arbitrarily large) time $T$, there exists $\gamma$ large enough so that the walker drifts to the right, for a large set of initial conditions on the environment.
%(the $\mathbb P_\rho$-measure of its complement goes quickly to zero as $\gamma$ gets increased). 
This is the content of Lemma \ref{Lemma: Initial stpe large gamma} below. 
This is an easy result as we first fix $T$, and then chose $\gamma$ large enough so that the law of $X_T$ is well approximated by the law of a walk in a homogeneous environment, with drift given by \eqref{drift condition}.
%This is an easy result as we are allowed to choose $\gamma$ after fixing $T$, so that the deviation with respect to a walk in a homogeneous environment, with drift , can be made arbitrarily small. 

Second, we keep $\gamma$ fixed, and we use a renormalization procedure to extend the previous result to arbitrarily long times $t$, and for a set of initial conditions which probability converges quickly to 1 as $t\to \infty$.
This is done in Proposition \ref{Proposition: Renormalization large gamma} below, at the cost of reducing slightly our lower bound on the drift 
($\mathsf v$ in  \eqref{result lemma initial step large gamma} becomes $\mathsf v/2$ in \eqref{content of drifs at a fixed time}).
Our scheme is eventually inspired by a method developed in \cite{Bricmont_Kupiainen_91} (see also \cite{Bricmont_Kupiainen_09} for a dynamical uniformly mixing environment), though our problem requires much less involved estimates.  
%To explain the main idea that allows to ``improve" the initial result stated in Lemma \ref{Lemma: Initial stpe large gamma} below, let us see how to control the walk over a time $T' \gg T$
As we cannot go at once to arbitrarily long times, let us first consider the special case $t=T'$ with $T'= T(T+1) \sim T^2$, as will be done in the proof of Proposition \ref{Proposition: Renormalization large gamma} 
(the choice $T'\sim T^2$ is arbitrary to a large extend). 
Once this will be understood, the procedure will just be iterated to reach all times $T^{2^k}$ for any $k\ge 1$ (it is then not hard to see that all intermediate times can be reached as well).

Over a time $T'$, the walker will evolve in the ball $B(0,T')$ if it starts initially at the origin.  
Therefore, up to a small extra boundary of size $\sqrt{T'}$, we need to control the initial environment in $B(0,T')$. 
In the box $B(0,T')$, we allow for an initial environment $\eta$ such that all traps are of size no larger than $\phi_{T'}$ (see \eqref{phi function}), i.e.\@ such that $B(0,T') \subset G(\eta,\phi_{T'})$ (see \eqref{definition good set}). 
Incidentally, we notice that we already achieved one of our goals: 
the measure of the set of initial configurations that we need to exclude when we observe the walker over a time $T'$, 
is smaller than it was over a time $T$ (this set was the set of $\eta$ such that $B(0,T)\not\subset G(\eta,\phi_{T})$, see Lemma  \ref{Lemma: Initial stpe large gamma} below). 

We then decompose $X_{T'}$ as $X_{T'} = \sum_{k=1}^T (X_{kT} - X_{(k-1)T})$ (see Figure \ref{figure: Renormalization} with $t_n = r = T$ and $t_{n+1}=T'$). 
We would like to use our knowledge on the behavior of the walk over a time $T$ to control the behavior of each increment $X_{kT} - X_{(k-1)T}$. 
This is not possible for the first term $X_T - X_0$ though, since we only know that $B(0,T') \subset G(\eta, \phi_{T'})$, while we would need $B(0,T) \subset G(\eta, \phi_{T})$ (see Lemma \ref{Lemma: Initial stpe large gamma} below);
therefore, we just use the trivial bound $X_T - X_0 \ge -T$ (this is still fine since $T \ll T'$). 
However, with high probability with respect to the evolution of the environment, all points $y\in B(0,T)$ are such that $y\in G(\eta_{kT},\phi_{T})$ for all $1 \le k \le T-1$, 
so that we do have a good control on all the steps $X_{kT} - X_{(k-1)T}$ with $2 \le k \le T$. %(the probability that this does not happen is small enough to be neglected).
This is actually the main point of the argument; it is a consequence of the relaxation of traps as expressed by Proposition \ref{Proposition: good set at a later time}
and of the choice of suitable sequence $(\epsilon_L)_{L\ge 1}$ (see the comments after \eqref{epsilon sequence}).
%{\color{blue}(see the choice of a suitable sequence $(\epsilon_L)_{L\ge 0}$ in \eqref{epsilon sequence} and the comments )}
Thanks to the quantitative estimate \eqref{result lemma initial step large gamma} in Lemma \ref{Lemma: Initial stpe large gamma} below, %and to our choice $T' \sim T^2$, 
we deduce then a lower bound on the drift of $X_{T'}$ by reducing a bit the bound $\mathsf v$ in \eqref{result lemma initial step large gamma} (see \eqref{cn sequence}).
The iteration of this procedure to larger scales is straightforward. 

Finally, Theorem \ref{Theorem: lim inf to the right} is established thanks to a Borel-Cantelli type of argument.

% SUBSECTION
\subsection{Initial step}\label{subsection: Initial step large gamma}

In the next Lemma, $T$ is both a time and spatial scale; 
the condition $\gamma \ge \phi_T^3$ and the bound \eqref{result lemma initial step large gamma} turn out to be convenient but are not optimal. 

\begin{Lemma}\label{Lemma: Initial stpe large gamma}
Let $T\in \N$ be large enough, and let $\gamma \ge \phi_T^3$. 
There exists a constant $\mathsf v > 0$ (that depends only on $\alpha$, $\beta$ and $\rho$) such that, given  $\eta\in \{ 0,1\}^\Z$ such that 
\begin{equation*}
\forall y \in \mathrm B(0,T), \qquad y \in G(\eta, \phi_T ),
\end{equation*} 
it holds that 
\begin{equation}\label{result lemma initial step large gamma}
P^{\eta} (X_T \le \mathsf v T ) \; \le \; \ed^{- \phi_T^{1/4}}.
\end{equation}
\end{Lemma}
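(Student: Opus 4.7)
The plan is to write $X_T = A_T + M_T$ with $A_T := \sum_{k=0}^{T-1} E^\eta[X_{k+1}-X_k \mid \mathcal{G}_k]$, where $\mathcal{G}_k$ is the $\sigma$-algebra generated by $(\eta(j))_{j<k}$ and the walker's internal coins up to time $k-1$ (so that $X_k$ is $\mathcal{G}_k$-measurable), and to show separately that $A_T$ is close to the homogenized drift while $M_T$ is small. The martingale $M_T$ has increments bounded by $2$, so Azuma--Hoeffding gives $|M_T|\leq T^{3/4}$ with probability at least $1-2\exp(-T^{1/2}/8)$, well beyond the required $e^{-\phi_T^{1/4}}$. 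By independence of the coin from the exclusion dynamics and by the Markov property of $\eta$,
\[
E^\eta[X_{k+1}-X_k \mid \mathcal{G}_k] \;=\; (2\beta-1) + 2(\alpha-\beta)\,E^{\eta(k-1)}[\eta_{X_k}(1)] \qquad (k\geq 1),
\]
where on the right-hand side $\eta(k-1)$ is the field known given $\mathcal{G}_k$ and the expectation is over one step of the exclusion dynamics started from that field. Since $\alpha-\beta<0$ by assumption, the heart of the proof is to bound this quantity from above by $(1+\epsilon')\rho$ with $\epsilon'\to 0$ as $\phi_T\to\infty$, uniformly in $k$.

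To do so I combine the two results of Section \ref{Section: Dissipation of traps}. Lemma \ref{Lemma: estimate heat equation} applied with $M=0$, $t=1$, at scale $L_0$ yields $E^{\eta(k-1)}[\eta_{X_k}(1)] \leq (1 + \Const L_0^2/\gamma)(1+\epsilon_{L_0})\rho$ on the event $X_k\in G(\eta(k-1),L_0)$. For $k=1$ the hypothesis $B(0,T)\subset G(\eta,\phi_T)$ lets me take $L_0=\phi_T$ directly, and $\gamma\geq \phi_T^3$ makes both $\Const L_0^2/\gamma$ and $\epsilon_{L_0}$ of order $O(1/\ln\phi_T)$. For $k\geq 2$ I set $L_0=\phi_T^{1/2}$ and invoke Proposition \ref{Proposition: good set at a later time} with $L=\phi_T$, $J=\phi_T^{1/2}$, $t=k-1\geq 1$: the condition $\gamma t\geq L^3$ is exactly the hypothesis $\gamma\geq\phi_T^3$, and the sequence \eqref{epsilon sequence} gives $\epsilon_J-\epsilon_L\asymp 1/\ln\phi_T$, so $L(\epsilon_J-\epsilon_L)\asymp \phi_T/\ln\phi_T \gg \Const$. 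The proposition then yields, for each $y\in B(0,T)$, a failure probability of order $\exp(-\const\,\phi_T^{1/2}/\ln^2\phi_T)$, and a union bound over $y\in B(0,T)$ and $k\in[2,T]$ (polynomial in $T$) keeps the total exceptional probability below $e^{-\phi_T^{1/4}}$ for $T$ large. On the resulting good event $\epsilon'=O(1/\ln\phi_T)$, whence $A_T\geq (2\tilde p-1)T - O(T/\ln\phi_T)$; combining with the Azuma estimate on $M_T$ gives $X_T\geq (2\tilde p-1)T/4$, so any $\mathsf v<(2\tilde p-1)/4$ works.

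The delicate point is the three-scale matching: Proposition \ref{Proposition: good set at a later time} only produces decay when $J<L$ strictly, while Lemma \ref{Lemma: estimate heat equation} is only useful when $L_0^2/\gamma$ is small; the intermediate choice $L_0=J=\phi_T^{1/2}$ satisfies both, and one must then verify that the resulting tail $\exp(-\const J/\ln^2\phi_T)$ still dominates $\phi_T^{1/4}$ after absorbing a union bound of size $O(T^2)$. With $\phi_T=T^{1/100}$ chosen in \eqref{phi function} and the logarithmic sequence \eqref{epsilon sequence}, one has $\phi_T^{1/2}/\ln^2\phi_T \gg \phi_T^{1/4}+\log T$ for $T$ large, and everything closes; this is where the seemingly arbitrary exponents in the statement of the lemma are really used.
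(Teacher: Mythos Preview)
Your proof is correct, and the environmental analysis is identical to the paper's: the intermediate scale $J=\phi_T^{1/2}$, the appeal to Lemma~\ref{Lemma: estimate heat equation} for the one-step occupation probability, the use of Proposition~\ref{Proposition: good set at a later time} with $(J,L)=(\phi_T^{1/2},\phi_T)$ to propagate the good set, and the final exponent check $\phi_T^{1/2}/\ln^2\phi_T \gg \phi_T^{1/4}+\log T$ all match exactly.

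The difference is in how you handle the randomness of the walk once the environment is controlled. The paper iterates a one-step stochastic domination: it shows that on the good event $\mathcal E_{m-1}$, the increment $X_{m+1}-X_m$ is stochastically bounded below by an independent $\pm 1$ variable $Y_m$ with mean $>0$, and then applies Cram\'er's bound to $\sum Y_k$. You instead take the Doob decomposition $X_T=A_T+M_T$, bound the compensator $A_T$ pathwise on the good event, and control the martingale $M_T$ by Azuma--Hoeffding. Both routes are standard and lead to the same quantitative conclusion. Your approach is arguably tidier for this lemma in isolation; the paper's coupling-with-i.i.d.\ template has the advantage that it is reused verbatim in the renormalization step (Proposition~\ref{Proposition: Renormalization large gamma}), where the $Y_k$ become block variables taking values $\varc t_n$ or $-t_n$ rather than $\pm 1$, so the two proofs mirror each other. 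One small point you left implicit: the $k=0$ term of $A_T$ is just $2\omega(0,0)-1$, which may be negative; the paper handles this by shifting to $X_T-X_1$, while in your setup it is absorbed into the $O(1)$ slack, which is fine.
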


\begin{proof}
Letting $\mathsf v > 0$ be a constant that we will determine later, we first write 
\begin{equation}\label{before main thing in proof Lemma initial step large gamma}
P^{\eta} (X_{T} \le \mathsf v T ) \;  \le \; P^{\eta} (X_T - X_1 \le \mathsf v T +1 ),
\end{equation}
as the hypotheses do not allow to determine whether the site $0$ is initially occupied or not. 

Let us define a few objects.  
Since we assume $\alpha \le \beta$, 
there exists $\delta > 0$ small enough so that \eqref{drift condition} will still be satisfied with any $\rho' \in [ 0, \rho + \delta]$ instead of $\rho$.
Let $\delta > 0$ be such that this holds. Up to an enlargement of the probability space we define a sequence of i.i.d.\@ random variables $(Y_k)_{k \ge 1}$ that are independent from both the exclusion process and the walker with distribution 
\begin{equation}\label{def of increments initial lemma large gamma}
P^{\eta} ( Y_k = 1) \; = \; \beta + (\alpha - \beta) (\rho + \delta), 
\qquad 
P^{\eta} ( Y_k = -1 ) \; = \; 1 - P^{\eta} ( Y_k = 1).
\end{equation}
Thanks to \eqref{drift condition} and our choice of $\delta$, it holds that $P^{\eta} ( Y_k = 1)  > 1/2$.
For $m\ge 2$, we define the events 
\begin{equation}\label{definition of J in initial step large gamma}
\mathcal E_{m-1} \; = \; \{ y \in G(\eta (m-1),J), \forall y \in \mathrm B(0,m) \}
\qquad \text{with}\qquad 
J = \lfloor \phi_T^{1/2} \rfloor,
\end{equation}
and $\mathcal E_0 = \{ y \in G(\eta (0),\phi_T), \forall y \in \mathrm B(0,1) \}$. 
By hypothesis, $P^{\eta} (\mathcal E_0) = 1$. 

We aim to show that 
\begin{equation}\label{main thing in proof Lemma initial step large gamma}
P^{\eta} (X_T - X_1 \le \mathsf v T +1 ) 
\; \le \;
P^{\eta} \Big( \sum_{k=1}^{T-1} Y_k \le \mathsf v T + 1 \Big) + \sum_{k=1}^{T-1} P^{\eta} (\mathcal E^c_{k-1}).
\end{equation}
Before deriving this expression, let us see that it implies the lemma for $\mathsf v$ small enough depending on the values of $\alpha$, $\beta$ and $\rho$.
Indeed for such a $\mathsf v$, by \eqref{def of increments initial lemma large gamma}, the first term in the right hand side of  \eqref{main thing in proof Lemma initial step large gamma} is seen to be bounded as 
\begin{equation}\label{first bound initialisation}
P^{\eta} \Big( \sum_{k=1}^{T-1} Y_k \le \mathsf v T + 1 \Big) \; \le \; \ed^{- \const T}
\end{equation}
for some constant $\const >0$.
The second term in the right hand side of \eqref{main thing in proof Lemma initial step large gamma} is then bounded by means of Proposition \ref{Proposition: good set at a later time}. 
Since $T$ is assumed to be large enough, and since $\gamma \ge \phi_T^3$, we deduce that 
\begin{align}
\sum_{k=1}^{T-1} P^{\eta} (\mathcal E^c_{k-1})
\; &\le \; \sum_{\stackrel{y\in \mathrm B({\rd 0},T),}{ s\in \{1, \dots , T-2 \} }}  P^{\eta} \big( y \notin G(\eta(s),J) \big) 
\; \le \; 
\Const \, \sum_{y,s} \frac{\exp \big( -c J \rho^2  (\epsilon_{J} - \epsilon_{\phi_{T}})^2 \big) }{\rho^2  (\epsilon_{J} - \epsilon_{\phi_{T}})^2 } 
\nonumber\\
\; &\le \; 
\Const \, T^2 \,  \frac{\exp \big( -c J \rho^2  (\epsilon_{J} - \epsilon_{\phi_{T}})^2 \big) }{\rho^2  (\epsilon_{J} - \epsilon_{\phi_{T}})^2 }
\; \le \;
\ed^{-\phi_T^{1/3}} ,
\label{second bound initialisation}
\end{align} 
where, to get the last inequality, we have used the explicit expressions \eqref{epsilon sequence} and \eqref{phi function} and \eqref{definition of J in initial step large gamma}, 
as well as the fact that $T$ is large enough.
We obtain the lemma by inserting the bounds \eqref{first bound initialisation} and \eqref{second bound initialisation} in \eqref{main thing in proof Lemma initial step large gamma}, 
and then \eqref{main thing in proof Lemma initial step large gamma} in \eqref{before main thing in proof Lemma initial step large gamma}. 

We are thus left with the proof of \eqref{main thing in proof Lemma initial step large gamma}. 
For this, we show that, for any $m\ge 1$ and any $a\in \R$, 
\begin{equation}\label{rewriting thing to show initial lemma large gamma}
P^{\eta} \big( X_{1+m} - X_1 \le a \big) 
\; \le \;
P^{\eta} \big( X_{1 + (m-1)} + Y_m - X_1 \le a \big) \, +  \, P^{\eta} (\mathcal E_{m-1}^c), 
\end{equation}
from which \eqref{main thing in proof Lemma initial step large gamma} follows by iteration using Fubini. 

Let us first deal with the case $m=1$. 
We need to show that $P^{\eta} (X_2 - X_1 \le a) \le P^{\eta} (Y_1 \le a)$, but for this it is enough to establish that $P^{\eta} (X_2 - X_1 = -1) \le P^{\eta} (Y_1 = -1)$.
It holds that 
\begin{align*}
P^{\eta} (X_2 - X_1 = -1)
\; &= \; 
\sum_{z\in \Z} \sum_{\sigma = 0,1} P^{\eta} (X_2 - X_1 = -1 | X_1 = z, \eta_z(1) = \sigma) P^{\eta} (X_1 = z,\eta_z(1)=\sigma)\\
\; &= \;
\sum_{z\in \Z} P^{\eta} (X_1 = z)\sum_{\sigma = 0,1} P^{\eta} (X_2 - X_1 = -1 | X_1 = z, \eta_z(1) = \sigma) P^{\eta} (\eta_z(1)=\sigma)\\
\; &= \;
\sum_{z\in \mathrm B({0},1)} P^{\eta} (X_1 = z)
\Big(
1 - \beta + (\beta - \alpha) P^{\eta} (\eta_z (1) = 1)
%(1-\alpha) P^{\eta} (\eta_z(1)=1) + (1 - \beta) P^{\eta} (\eta_z(1)=0)
\Big).
\end{align*}
Since, by \eqref{def of increments initial lemma large gamma}, we have $P^{\eta} (Y_1 = -1) = 1 - \beta + (\beta - \alpha)(\rho + \delta)$, 
we will conclude by showing that $P^{\eta} (\eta_z (1) = 1) \le \rho + \delta$ for any $z\in \mathrm B({0},1)$. 
Let $z\in \mathrm B({0},1)$.
It holds that 
\begin{equation}\label{initial step large gamma local probability}
P^{\eta}\big( \eta_z(1) = 1 \big)
\; = \; \sum_{w \in \Z} \eta_w p(1,w-z) \; = \; \overline{\eta}_z(1),
\end{equation}
with $\overline\eta(t)$ defined in \eqref{mean evolution of the field}.
%where, as in Section \ref{Section: Dissipation of traps}, $\overline{\eta} (1)$ denotes the solution to the heat equation, with Laplacian $\gamma \Delta$, after a time 1, starting from $\eta$. {\color{blue} Rappeler la definition directement.}
We can use Lemma \ref{Lemma: estimate heat equation}, with $M=0$, $L=\phi_T$, $t=1$ and $x=z$, to estimate the right hand side of \eqref{initial step large gamma local probability}.
Since $\gamma \ge \phi_T^3$ and $z\in G(\eta,\phi_T)$ by hypothesis, we conclude that if $T$ is large enough,
\begin{equation}\label{concrete estimate for initial step large gamma}
P^{\eta} \big( \eta_z(1) = 1\big) 
\; \le \; \Big( 1 + \frac{\Const \phi_T^2}{\gamma} \Big) \sup \{ \langle \eta \rangle_{z,r} : r\ge \phi_T \} 
\; \le \; \Big( 1 + \frac{\Const \phi_T^2}{\phi_T^3} \Big) (1+ \epsilon_{\phi_T}) \rho\\
\; < \; \rho +  \delta.
\end{equation}

Let us next consider the case $m>1$ in \eqref{rewriting thing to show initial lemma large gamma}:
\begin{multline*}
P^{\eta} (X_{1+m} - X_1 \le a)
\; = \; 
P^{\eta} (X_{1 +m} - X_m + X_m - X_1 \le a)\\
\; \le \;
P^{\eta} (\mathcal{E}_{m-1}^c)+\sum_{z_1,z_m \in \Z} 
 P^{\eta} \big( X_{1+m} - X_m \le a - (z_m-z_1) \big| X_1=z_1,X_m=z_m,\mathcal{E}_{m-1}\big)P^\eta(X_1=z_1,X_m=z_m,\mathcal{E}_{m-1}).
\end{multline*}
For each term of the sum, we proceed with the first factor exactly as for $m=1$. 
Replacing $L=\phi_T$ by $L=J$ in \eqref{concrete estimate for initial step large gamma}, we find 
\begin{equation*}
P^{\eta} \big( X_{1+m} - X_m \le a - (z_m-z_1) \big| X_1=z_1,X_m=z_m, \mathcal{E}_{m-1}\big)  \; \le \; P^{\eta} \big(Y_m \le a - (z_m - z_1) \big).  
\end{equation*} 
Altogether, we obtain
\begin{align*}
P^{\eta} (X_{1+m} - X_1 \le a)
\; &\le \;  \sum_{z_1,z_m \in \Z} P^{\eta} (Y_m \le a - (z_m - z_1)) P^{\eta}( X_1=z_1,X_m=z_m) \, + \, P^{\eta} (\mathcal E_{m-1}^c)\\
\; &= \; 
P^{\eta} (X_{1+(m-1)} + Y_m - X_1 \le a) \, + \, P^{\eta} (\mathcal E_{m-1}^c), 
\end{align*}
as desired. 
\end{proof}

% SUBSECTION
\subsection{Renormalization procedure}\label{subsection: Renormalization procedure}

Let $\mathsf v$ be as in Lemma \ref{Lemma: Initial stpe large gamma}. 

\begin{Proposition}\label{Proposition: Renormalization large gamma}
Let $T$ be large enough %so that Lemma \ref{Lemma: Initial stpe large gamma} holds, 
and let $\gamma \ge \phi_T^3$.
Given $\eta\in \{ 0,1\}^\Z$ and $t\geq T$ such that 
\begin{equation*}
\forall y\in \mathrm B({0},t), \qquad  y \in G(\eta, \phi_t ),
\end{equation*}
it holds that 
\begin{equation}\label{content of drifs at a fixed time}
P^{\eta} \Big( X_t \le \frac{\mathsf v}{2} t \Big) \; \le \; \ed^{- \phi_t^{1/4}}.
\end{equation}
\end{Proposition}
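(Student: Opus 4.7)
I will prove the statement by a multi-scale induction along the doubly exponentially growing sequence $t_0 := T$ and $t_{n+1} := t_n(t_n+1)$. The induction will produce a decreasing sequence $\mathsf v_0 := \mathsf v > \mathsf v_1 > \cdots$ with $\inf_n \mathsf v_n \ge \mathsf v/2$ for $T$ large, such that, for every $\eta$ with $\mathrm B(0,t_n) \subset G(\eta,\phi_{t_n})$,
\[
P^\eta\bigl(X_{t_n} \le \mathsf v_n t_n\bigr) \le e^{-\phi_{t_n}^{1/4}}.
\]
The base case $n=0$ is exactly Lemma \ref{Lemma: Initial stpe large gamma}. Since $\mathsf v_n \ge \mathsf v/2$, this implies the proposition at the discrete scales $t_n$, and arbitrary $t \ge T$ is reduced to these by the decomposition below.

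\textbf{Inductive step.} Decompose $X_{t_{n+1}} = \sum_{k=1}^{t_n+1}\Delta_k$ with $\Delta_k := X_{kt_n} - X_{(k-1)t_n}$, bound $\Delta_1 \ge -t_n$ trivially, and for $k \ge 2$ condition on the filtration $\mathcal F_{(k-1)t_n}$. By the Markov property, conditional on $X_{(k-1)t_n}=z$ and $\eta((k-1)t_n)=\xi$, the law of $\Delta_k$ is that of $X_{t_n}$ in the environment $\xi$ shifted by $z$. To apply the induction hypothesis to $\Delta_k$, one needs $\mathrm B(z, t_n) \subset G(\xi, \phi_{t_n})$; since $|z| \le (k-1)t_n$ and $kt_n \le t_{n+1}$, this follows from the event
\[
\mathcal G := \bigcap_{k=2}^{t_n+1}\bigl\{\mathrm B(0, t_{n+1}) \subset G\bigl(\eta((k-1)t_n),\, \phi_{t_n}\bigr)\bigr\}.
\]
Applying Proposition \ref{Proposition: good set at a later time} site-by-site with $L=\phi_{t_{n+1}}$, $J=\phi_{t_n}$ and $t=(k-1)t_n$, whose hypotheses $\gamma t \ge L^3$ and $L(\epsilon_J - \epsilon_L) \ge \Const$ are satisfied for $T$ large because $\gamma \ge \phi_T^3$, $t_n \ge T$, $\phi_{t_{n+1}} \sim \phi_{t_n}^2$, and $\epsilon_{\phi_{t_n}} - \epsilon_{\phi_{t_{n+1}}} \sim 1/\log\phi_{t_n}$, a union bound over $y$ and $k$ gives
\[
P^\eta(\mathcal G^c) \le \Const\, t_{n+1}^2\, \exp\!\bigl(-\const\, \phi_{t_n}/\log^2\phi_{t_n}\bigr),
\]
which is much smaller than $e^{-\phi_{t_{n+1}}^{1/4}} = e^{-\phi_{t_n}^{1/2}}$ for $T$ large.

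\textbf{Counting failures and drift.} On $\mathcal G$ the induction hypothesis yields $P^\eta(\Delta_k < \mathsf v_n t_n \mid \mathcal F_{(k-1)t_n}) \le p_n := e^{-\phi_{t_n}^{1/4}}$ for every $k \ge 2$. By iterated conditioning, the number $N$ of bad blocks satisfies $P^\eta(N \ge m \mid \mathcal G) \le \binom{t_n}{m} p_n^m$. Taking $m_n := \lceil \phi_{t_n}^{1/4}\rceil$,
\[
P^\eta(N \ge m_n \mid \mathcal G) \le (t_n p_n)^{m_n} \le \exp\!\bigl(-m_n \phi_{t_n}^{1/4}/2\bigr) = e^{-\phi_{t_n}^{1/2}/2},
\]
using $\phi_{t_n}^{1/4} \gg \log t_n$ for $T$ large. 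On $\mathcal G \cap \{N < m_n\}$, counting at most $m_n$ failures among the $t_n$ non-trivial blocks,
\[
X_{t_{n+1}} \ge -t_n + (t_n - m_n)\mathsf v_n t_n - m_n t_n \ge \mathsf v_n t_{n+1}\bigl(1 - \Const m_n/t_n\bigr),
\]
so I set $\mathsf v_{n+1} := \mathsf v_n(1 - \Const t_n^{-399/400})$, noting $m_n/t_n = t_n^{-399/400}$. Since $t_k$ grows doubly exponentially, $\sum_k t_k^{-399/400}$ converges, and the infinite product exceeds $1/2$ for $T$ large, giving $\mathsf v_n \ge \mathsf v/2$ throughout. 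Summing the two bounds closes the induction at level $n+1$ with failure probability $\le e^{-\phi_{t_{n+1}}^{1/4}}$.

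\textbf{Arbitrary $t \ge T$ and main obstacle.} For $t \in [t_n, t_{n+1})$, the same block decomposition with $\lfloor t/t_n\rfloor$ blocks and a remainder $<t_n$ yields the proposition; the borderline case $t$ very close to $t_n$, in which only a few blocks fit, is handled by first waiting a short time $s \sim \phi_t^3/\gamma$ to upgrade via Proposition \ref{Proposition: good set at a later time} the initial $\phi_t$-goodness to $\phi_{t_n}$-goodness on $\mathrm B(0, t_n)$, then invoking the induction at scale $t_n$ and bounding $|X_t - X_s|$ trivially. The key technical obstacle is the probability bookkeeping at each level: a naive ``all non-trivial blocks succeed'' union bound yields only $t_n p_n$, nowhere near the target $e^{-\phi_{t_{n+1}}^{1/4}} = e^{-\phi_{t_n}^{1/2}}$. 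The fix of counting failures and allowing up to $m_n \sim \phi_{t_n}^{1/4}$ bad blocks is sharply calibrated: the combinatorial factor $\binom{t_n}{m_n}$ is offset by $p_n^{m_n}$ so that $m_n \phi_{t_n}^{1/4} = \phi_{t_n}^{1/2}$ matches the required exponent, while the resulting drift correction $m_n/t_n = t_n^{-399/400}$ remains summable across scales.
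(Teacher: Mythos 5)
Your overall strategy---a multi-scale induction along a doubly-exponentially growing sequence of time scales, using Proposition~\ref{Proposition: good set at a later time} to ensure the environment remains good at later block starting times---is the one used in the paper, and your treatment of $\mathcal G$ and of the decay of $\mathsf v_n$ is fine. The main methodological difference is that you replace the paper's device (introduce i.i.d.\ variables $Y_k$ and apply a Hoeffding-type concentration bound, which yields the very strong estimate $\ed^{-ct_n^{1/2}}$) with a combinatorial count of bad blocks; this is a valid alternative but far tighter, and as written the calibration does not quite close: with $m_n=\lceil\phi_{t_n}^{1/4}\rceil$ you obtain $(t_np_n)^{m_n}\le \ed^{-\phi_{t_n}^{1/2}/2}$, while the target exponent is $\phi_{t_{n+1}}^{1/4}=t_{n+1}^{1/400}>t_n^{1/200}=\phi_{t_n}^{1/2}$ (since $t_{n+1}>t_n^2$), so your bound exceeds the target. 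This is fixable by taking $m_n$ a constant factor larger; the drift loss $m_n/t_n$ stays summable.

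The genuine gap is the extension to arbitrary $t\ge T$. You fix a single sequence $t_0=T$, $t_{n+1}=t_n(t_n+1)$ once and for all, prove the estimate at the discrete scales $\{t_n\}$, and interpolate; but for $t\in(t_n,t_{n+1})$ with $t/t_n$ bounded (say $t\le 100\,t_n$, a huge range since $t_{n+1}/t_n\approx t_n$), only a handful of blocks of size $t_n$ fit, so the failure probability your scheme produces is of order $p_n=\ed^{-\phi_{t_n}^{1/4}}$, which is strictly \emph{larger} than the target $\ed^{-\phi_t^{1/4}}$, and the ratio $\ed^{\phi_t^{1/4}-\phi_{t_n}^{1/4}}$ diverges as $t_n\to\infty$. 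Your proposed salvage (wait $s\sim\phi_t^3/\gamma$ to upgrade $\phi_t$-goodness to $\phi_{t_n}$-goodness, then run the scale-$t_n$ estimate and bound the tail trivially) still yields $\approx \ed^{-\phi_{t_n}^{1/4}}$, which does not improve matters, and moreover Proposition~\ref{Proposition: good set at a later time} cannot be invoked when $t$ and $t_n$ are too close (the hypothesis $L(\epsilon_J-\epsilon_L)\ge\Const$ fails because $\epsilon_{\phi_{t_n}}-\epsilon_{\phi_t}$ is then too small). The paper avoids this issue entirely by choosing $(t_n)$ adaptively for each given $t$: set $t_N=t$, recurse downward $t_{k-1}=\lfloor\sqrt{t_k}\rfloor$, and let $t_0$ land anywhere in the window $[T^{1/3},T]$; the induction then applies directly at the scale $t$ itself, with no interpolation needed. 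You would need to adopt this adaptive construction (or an equivalent) to complete the proof.
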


\begin{figure}[h!]
\begin{center}
\includegraphics[width=0.8\textwidth]{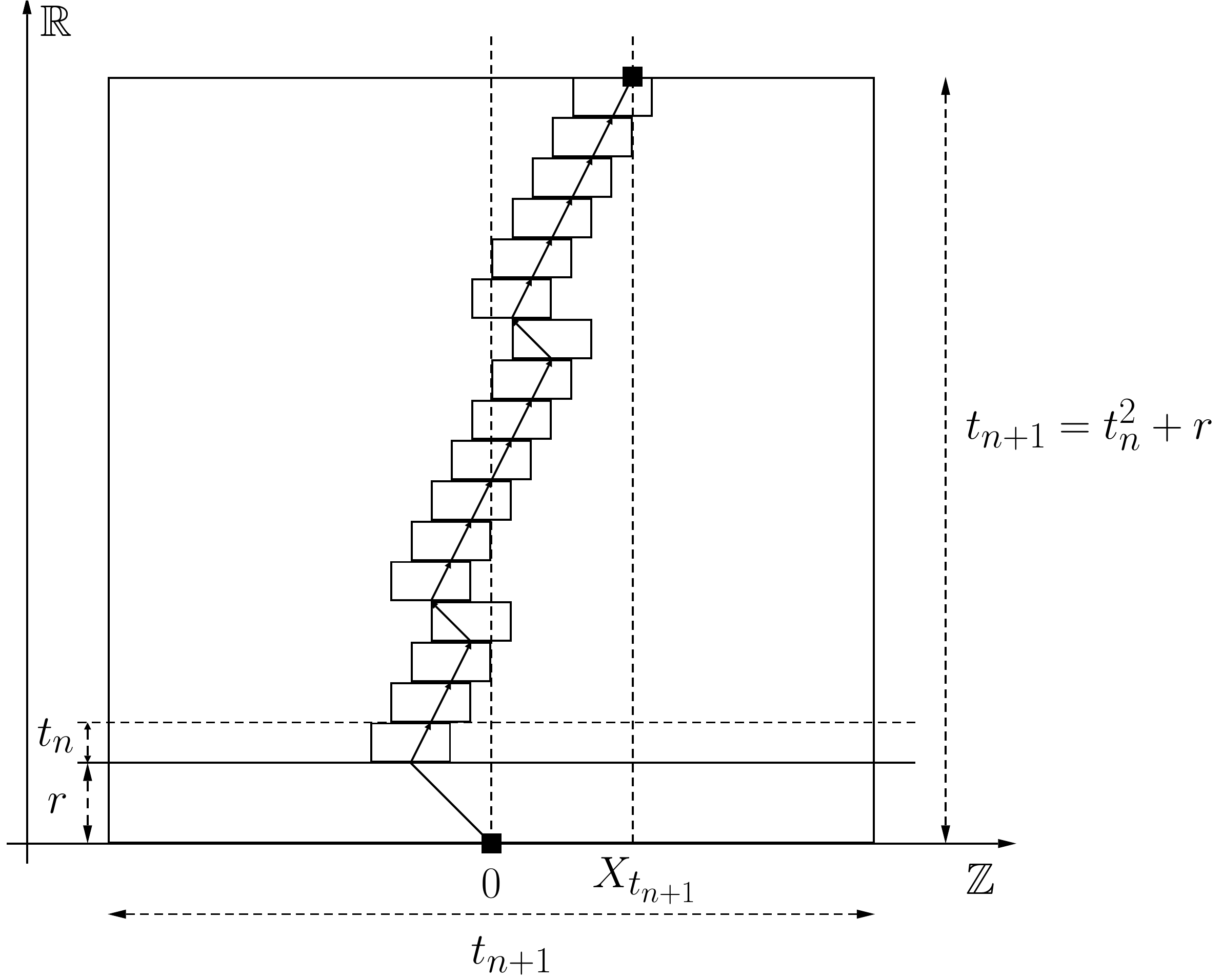}
\end{center}
\caption{
\label{figure: Renormalization}
Renormalization procedure used in the proof of Proposition \ref{Proposition: Renormalization large gamma}. 
Up to time $r$, we assume the worst case scenario ($X_r = -r$) since we lack a good control on the initial environment. 
With high probability, it is then possible to apply $t_n$ times our inductive hypothesis, providing a lower bound for the drift of the walker. 
In the rare cases ($4^{\mathrm th}$ and $11^{\mathrm{th}}$ space-time blocks from below on the figure) where $X_{kt_n} - X_{(k-1)t_n} < c_n t_n$, we assume again the worst case scenario. 
}
\end{figure}

\begin{proof}[Proof of Proposition \ref{Proposition: Renormalization large gamma}]
The main construction of the proof is illustrated on Figure \ref{figure: Renormalization}. 
We consider $T$ large enough so that the conclusions of Lemma \ref{Lemma: Initial stpe large gamma} hold for the time $T^{1/3}$.
We then define a sequence $(t_n)_{n\ge 0}$ with $t_0 \in [T^{1/3},T]$ and
\begin{equation}\label{deftn}
\textrm{for } n\geq 0,\qquad t_{n+1}\in[t_{n}^2,(t_n+1)^2]
\end{equation}
such that for some $N\ge 0$, $t_N=t$ (for example define recursively, from $t_N=t$ with some suitable $N$, $t_{k-1}=\lfloor \sqrt{t_k} \rfloor$ until reaching the interval $[T^{1/3},T]$).
We define also a sequence $(c_n)_{n\ge 0}\subset \R_+$ by
\begin{equation}\label{cn sequence}
\mathsf v \; \ge \; c_n \; = \; \bigg(3-\sum_{k=0}^n \frac{1}{2^k}\bigg) \frac{\mathsf v}{2} \; \ge \; \frac{\mathsf v}{2} \qquad (n\ge 0). 
\end{equation}
We prove by recurrence on $n\ge 0$ that, given $\eta\in \{ 0,1\}^\Z$ and $n\ge n_0$, 
if, for any $y\in \mathrm B({0},t_n)$, it holds that $y \in G(\eta, \phi_{t_n})$, then 
\begin{equation}\label{drift: to show by recurrence}
P^{\eta} (X_{t_n} \le c_n t_n) \; \le \; \ed^{-\phi_{t_n}^{1/4}}. 
\end{equation}
This will imply the claim. 

By Lemma \ref{Lemma: Initial stpe large gamma} and the hypotheses, \eqref{drift: to show by recurrence} holds true for $n=0$ since $\gamma \ge \phi^3_{T}$.
We now assume that \eqref{drift: to show by recurrence} holds for some $n\ge 0$, and we show that it implies it for $n+1$.  
We follow the same steps as in the proof of Lemma \ref{Lemma: Initial stpe large gamma}.
To simplify notations, let us write $t_n = \vart$ and $t_{n+1} = \vart'$, as well as $c_n = \varc$ and $c_{n+1} = \varc'$. 
As in Lemma \ref{Lemma: Initial stpe large gamma}, we first need to wait an initial step, 
as the information on the initial environment does not allow us to use our inductive hypothesis. 
It follows from the definition \eqref{deftn} of $(t_n)_{n\geq 1}$ that  $\vart' = r + (\vart - 1)\vart$ for some $\vart \le r < 4 \vart$. 
We write
\begin{equation}\label{a first estimate in the inductive step}
P^{\eta} (X_{\vart'} \le \varc' \vart' )
\; \le \; 
P^{\eta} (X_{\vart'} - X_{r} \le \varc' \vart' + r).
\end{equation}
If $n_0$ is large enough, waiting this time $r$ will suffice to dissipate possible traps with high probability. 

As in the proof of Lemma \ref{Lemma: Initial stpe large gamma} we define on the same probability space (enlarged if necessary) a sequence $(Y_k)_{k\ge 1}$ of i.i.d.\@ random variables independent from both the exclusion process and the walker with distribution  
\begin{equation*}
P^{\eta} (Y_k  = \varc \vart) \; = \; 1 - \ed^{-\phi_\vart^{1/4}}, \qquad
P^{\eta} (Y_k = -\vart) \; = \; \ed^{-\phi_\vart^{1/4}}.
\end{equation*}
Using our inductive hypothesis, we aim to show that 
\begin{equation}\label{two part equation in renormalization large gamma}
P^{\eta} (X_{\vart'} - X_{r} \le \varc' \vart' + r ) \; \le \; P^{\eta} \Big(\sum_{k=1}^{\vart-1} Y_k \le \varc' \vart' + r \Big)  \; + \;
\sum_{\substack{y\in\mathrm B({\rd 0},\vart'), \\ r\le s \le \vart' }} P^{\eta} \big( y\notin G(\eta(s),\phi_{\vart}) \big).
\end{equation}
To see this, we show that, for any $m\ge 1$ and any $a\in \R$, we have 
\begin{equation}\label{another way to: large gamma renormalization inequality for iid variables}
P^{\eta}  (X_{r + m\vart} - X_{r} \le a ) \; \le \; P^{\eta} (X_{r+(m-1)\vart} + Y_m - X_r \le a) \; + \; P^{\eta} (\mathcal E_{m-1}^c)
\end{equation}
where
\begin{equation*}
\mathcal E_{m-1} \; = \; \big\{ \forall y \in \mathrm B({0},r+m\vart), y \in G(\eta(r + (m-1)\vart),\phi_\vart) \big\}.
\end{equation*}
Since $\vart' = r + (\vart-1)\vart$, \eqref{two part equation in renormalization large gamma} follows from \eqref{another way to: large gamma renormalization inequality for iid variables} by iteration.

%Let us first establish \eqref{another way to: large gamma renormalization inequality for iid variables} for $m=1$. 
%It holds that 
%\begin{equation*}
%P^{\eta}  (X_{r + \vart} - X_{r} \le a ) \; \le \; P^{\eta} (X_{r+\vart} - X_r \le a| \mathcal E_0) \; + \; P^{\eta} (\mathcal E_{0}^c).
%\end{equation*}
%By our inductive hypothesis at scale $n$, the first term in the right hand side of this relation is indeed bounded by $P^{\eta} (Y_1 \le a)$.
%Let us then treat the case $m> 1$:
We prove now \eqref{another way to: large gamma renormalization inequality for iid variables}. For any $m \ge 1$, 
\begin{multline*}
P^{\eta}  (X_{r + m\vart} - X_{r} \le a)
\; = \;
P^{\eta} (X_{r+m\vart} - X_{r+(m-1)\vart} + X_{r+(m-1)\vart} - X_r \le a ) \\
\; \leq \; 
P^{\eta}(\mathcal{E}_{m-1}^c)+\sum_{z_0,z_{m-1}\in \Z} P^{\eta} \big( X_{r+m\vart} - X_{r+(m-1)\vart} \le a - (z_{m-1} - z_0) \big| A_{z_0,z_{m-1}}\big) P^{\eta} (X_{r+(m-1)\vart}=z_{m-1},  X_r = z_0).
\end{multline*}
with 
\begin{equation*}
A_{z_0,z_{m-1}} = \{X_{r+(m-1)\vart}=z_{m-1},  X_r = z_0,\mathcal{E}_{m-1}\}.
\end{equation*}
Using the inductive hypothesis each term of the sum can be controlled by 
\begin{equation*}
P^{\eta} \big( X_{r+m\vart} - X_{r+(m-1)\vart} \le a - (z_{m-1} - z_0) \big| A_{z_0,z_{m-1}} \big) 
\; \le \;
P^{\eta} \big(Y_m \le a - (z_{m-1} - z_0)  \big).
\end{equation*}
Therefore indeed
$P^{\eta}  (X_{r + m\vart} - X_{r} \le a) \; \le \; P^{\eta} ( Y_m + X_{r+(m-1)\vart} - X_r \le a ) + P^{\eta} (\mathcal E_{m-1}^c)$.
So \eqref{another way to: large gamma renormalization inequality for iid variables} and hence \eqref{two part equation in renormalization large gamma} are shown. 

We now proceed to bound each term in the right hand side of \eqref{two part equation in renormalization large gamma} separately. 
First, it follows from the definition \eqref{deftn} of $(t_n)_{n \geq 0}$ and the definition \eqref{cn sequence} of $(c_n)_{n \geq 0}$ that, if $T$ is large enough,
\begin{equation}\label{intermediate bound on the cn}
M \vart \; := \; E^\eta (Y_k) \; = \; \varc \vart \big( 1 - \ed^{-\phi_\vart^{1/4}} \big)  - \vart \ed^{-\phi_\vart^{1/4}}
\; \ge \; 
\frac{\varc + \varc'}{2} \vart.
\end{equation} 
Therefore
\begin{align}
P^{\eta} \bigg(\sum_{k=1}^{\vart-1} Y_k \le \varc' \vart' + r \bigg)
\; &\le \; 
P^{\eta}\bigg( \sum_{k=1}^{\vart-1} \big(Y_k - M \vart\big) \le (\varc' - M)(\vart - 1)\vart + 8 \vart \bigg)
\nonumber\\
\; &\le \; 
P^{\eta}\bigg( \sum_{k=1}^{\vart-1} \big(Y_k - M \vart\big) \le -\frac{\varc - \varc'}{2}(\vart - 1)\vart + 8 \vart \bigg)
\nonumber\\
\; &\le \; 
P^{\eta}\bigg( \sum_{k=1}^{\vart-1} \big(Y_k - M \vart\big) \le -\vart^{3/4} (\vart -1) \bigg)
\;\; \le \;\; 
\ed^{- c \vart^{1/2}} ,
\label{iterative estimate in iterative step}
\end{align}
where we have used the decomposition $\vart' = r + (\vart-1)\vart$, as well as the bounds $r \le 4 \vart$ and $\varc' \le 1$ to get the first inequality,
the estimate \eqref{intermediate bound on the cn} to get the second one, 
the definitions \eqref{deftn} of $(t_n)_{n \geq 0}$ and \eqref{cn sequence} of $(c_n)_{n \geq 0}$ to get the third one, 
and finally a classical concentration bound for sum of independent bounded variables ($|Y_k|\le \vart$ a.s.) to obtain the last one.

We then use Proposition \ref{Proposition: good set at a later time} to get a bound on the second term in the right hand side of \eqref{two part equation in renormalization large gamma}. 
By assumption, $y \in G(\eta, \phi_{\vart'})$ as soon as $y \in \mathrm B({0}, \vart')$, 
while, by the definition \eqref{phi function} of the sequence $(\phi_L)$, we have $\gamma r \ge \gamma \vart \ge \phi^3_{\vart'}$. 
Therefore
\begin{equation}\label{probabilistic estimate in iterative step}
\sum_{\substack{y\in\mathrm B(0,\vart'), \\ r\le s \le \vart' }} P^{\eta} \big( y\notin G(\eta(s),\phi_{\vart}) \big)
\; \le \; 
\Const \vart'^{2}
\frac{ \exp\big(- c   \phi_t \rho^2 (\epsilon_{\phi_{\vart}} - \epsilon_{\phi_{\vart'} })^2 \big)}
{\rho^2 (\epsilon_{\phi_{\vart}} - \epsilon_{\phi_{\vart'}})^2}
\; \le \; \ed^{-\phi_\vart^{1/3}},
\end{equation}
where the last bound is obtained using the explicit expressions 
\eqref{epsilon sequence} for  $(\epsilon_L)$, \eqref{phi function} for $(\phi_L)$, and \eqref{deftn} for $(t_n)_{n \geq 1}$.

The result is obtained by inserting \eqref{iterative estimate in iterative step} and \eqref{probabilistic estimate in iterative step} in \eqref{two part equation in renormalization large gamma}, 
and then \eqref{two part equation in renormalization large gamma} in \eqref{a first estimate in the inductive step}.
\end{proof}

% SUBSECTION
\subsection{Proof of Theorem \ref{Theorem: lim inf to the right}}\label{subsection: Proof of Theorem lim inf to the right}

\begin{proof}[Proof of Theorem \ref{Theorem: lim inf to the right}]
%%Given $\eta \in \{ 0,1 \}^\Z$ and $t\in \N$, we define a subset $A(\eta,t)\subset \Z$ of \emph{admissible} points as follows:
%%$x\in A(\eta,t)$ if and only if for all $y \in \mathrm B(x,t)$, it holds that $y\in G(\eta,\phi_t)$. 
%Let us first show that there exists a constant $\Const < + \infty$ and $\alpha > 0$ such that{\rd !!}
%\begin{equation}\label{admissible points typical}
%\P_\rho (x \notin A(\eta,t)) \; \le \; \Const\, \ed^{-t^{\alpha}}. 
%\end{equation}
%Indeed, because the equilibrium measure is a Bernoulli product measure, we find
%\begin{align*}
%\P_\rho (x \notin A(\eta,t))
%\; &\le \; 
%\sum_{y\in \mathrm B(x,t)} \P_\rho (y \notin G(\eta,\phi_t))
%\; \le \; 
%\sum_{y\in \mathrm B(x,t)} \sum_{L \ge \phi_t} \P_\rho \big( \langle \eta \rangle_{y,L} > (1 + \epsilon_{\phi_t}) \rho \big)\\
%\; &\le \; 
%2 \sum_{y\in \mathrm B(x,t)} \sum_{L \ge \phi_t} \ed^{- c L\epsilon^2_{\phi_t}\rho}
%\; \le \;
%\Const t \, \frac{\ed^{-c \rho \phi_t \epsilon^2_{\phi_t}}}{\epsilon^2_{\phi_t}\rho}.
%\end{align*}
%Relation \eqref{admissible points typical} is then obtained by inserting the values of $\phi_t$ and $\epsilon_{\phi_t}$ given by \eqref{epsilon sequence} and \eqref{phi function}.

%Given $\eta \in \{ 0,1 \}^\Z$ and $t\in \N$, we define a subset $A(\eta,t)\subset \Z$ of \emph{admissible} points as follows:
%$x\in A(\eta,t)$ if and only if for all $y \in \mathrm B(x,t)$, it holds that $y\in G(\eta,\phi_t)$. 
Let us first show that there exists a constant $\Const < + \infty$ and $\alpha > 0$ such that
\begin{equation}\label{admissible points typical}
\P_\rho \left(\exists y \in B(0,t) \textrm{ s.t. } y\notin G(\eta,\phi_t)\right) \; \le \; \Const\, \ed^{-t^{\alpha}}. 
\end{equation}
Indeed, because the equilibrium measure is a Bernoulli product measure, we find
\begin{align*}
\P_\rho (\exists y \in B(0,t) \textrm{ s.t. } y\notin G(\eta,\phi_t))
\; &\le \; 
\sum_{y\in \mathrm B(0,t)} \P_\rho (y \notin G(\eta,\phi_t))
\; \le \; 
\sum_{y\in \mathrm B(0,t)} \sum_{L \ge \phi_t} \P_\rho \big( \langle \eta \rangle_{y,L} > (1 + \epsilon_{\phi_t}) \rho \big)\\
\; &\le \; 
2 \sum_{y\in \mathrm B(0,t)} \sum_{L \ge \phi_t} \ed^{- c L\epsilon^2_{\phi_t}\rho}
\; \le \;
\Const t \, \frac{\ed^{-c \rho \phi_t \epsilon^2_{\phi_t}}}{\epsilon^2_{\phi_t}\rho}.
\end{align*}
Relation \eqref{admissible points typical} is then obtained by inserting the values of $\phi_t$ and $\epsilon_{\phi_t}$ given by \eqref{epsilon sequence} and \eqref{phi function}.

By the Borel-Cantelli lemma, it then follows from Proposition \ref{Proposition: Renormalization large gamma} and \eqref{admissible points typical} 
that $\liminf_{t\rightarrow \infty} X_t \ge \mathsf v t/2$ $P_0-a.s.$,
where $\mathsf v$ as in Lemma \ref{Lemma: Initial stpe large gamma}. 
\end{proof}

% SECTION
\section{Drift for small $\gamma$}\label{Section: small gamma}
\label{sec:left}
We here prove
\begin{Theorem}\label{Theorem: drift for small gamma}
Assume that the condition \eqref{static transience} for transience to the right in a static environment holds. 
Then, for $\gamma$ small enough, there exists $v(\gamma)>0$ such that
\begin{equation*}
\liminf_{t\to \infty} \frac{X_t}{t} \; = \; v(\gamma) \qquad P_0-a.s.
\end{equation*}
\end{Theorem}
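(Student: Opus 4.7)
The overall plan mirrors the three-step architecture used to prove Theorem \ref{Theorem: lim inf to the right}. First I would establish an initial-step lemma giving, at some scale $T = T(\gamma)$, a bound of the form $X_T \ge \mathsf{v} T$ with high probability for a sufficiently good initial environment, where $\mathsf v = \mathsf v(\gamma) > 0$. Then I would use a multi-scale renormalization to extend this bound to all $t \ge T$ with failure probability of order $e^{-\phi_t^{1/4}}$. Finally a Borel-Cantelli argument, combined with an analog of \eqref{admissible points typical}, gives the almost-sure lower bound on $\liminf X_t/t$. The renormalization and Borel-Cantelli pieces can essentially be lifted from Section \ref{Section: Drift to the right}, because the trap-dissipation statement of Proposition \ref{Proposition: good set at a later time} is insensitive to the value of $\gamma$, provided the relation $\gamma t_n \ge \phi_{t_{n+1}}^3$ is built into the choice of scales $(t_n)_n$; with $\gamma$ fixed this is satisfied for all $n$ large enough in the doubly-exponentially growing sequence. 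All the novelty is thus concentrated in the initial step.

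For the initial step, the heuristic from the introduction is that for small $\gamma$ the dynamic walker inherits the transience of its static counterpart. Concretely, at a scale $T$ with $\gamma T \ll 1$, the environment in any window of size $\ll (\gamma T)^{-1/2}$ around the walker is essentially frozen, which should yield $X_T \ge T^{\delta_0}$ for some $\delta_0 \in (0, 1]$, hence $X_T \ge \mathsf{v} T$ with $\mathsf{v} := T^{\delta_0 - 1}$. My plan is to freeze the environment at time $0$, introduce the static quenched walker $\widetilde X$ in $\omega(0,\cdot)$, and couple $X$ to $\widetilde X$ via the Harris graphical construction: as long as the exclusion particles in the vicinity of the walker's range do not jump, the two walks coincide. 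A quantitative one-dimensional static RWRE estimate, obtained by analysing the potential $V(x) = \sum_{y=1}^x \ln\frac{1-\omega(0, y)}{\omega(0, y)}$ via Cramer-type bounds on the walk formed by its i.i.d.\ increments (which are bounded by \eqref{ellipticity} and have negative mean by \eqref{static transience}), would then give $\widetilde X_T \ge T^{\delta_0}$ with both quenched and annealed failure probabilities at most $e^{-T^{\alpha_0}}$ for some $\alpha_0 > 0$. Choosing $T = \gamma^{-\kappa}$ for a suitable $\kappa > 0$ transfers this bound to the dynamic walker.

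The renormalization step is then essentially Proposition \ref{Proposition: Renormalization large gamma} repeated: fix $t_0 \in [T^{1/3}, T]$, define $t_{n+1} \in [t_n^2, (t_n + 1)^2]$, and a decreasing sequence $(c_n)_n$ bounded below by $\mathsf v /2$ as in \eqref{cn sequence}. At each induction step, decompose $X_{t_{n+1}}$ into an initial trap-dissipation buffer of length $r \in [t_n, 4 t_n]$ followed by $t_n - 1$ sub-blocks of length $t_n$, apply the inductive hypothesis to each sub-block, and use Proposition \ref{Proposition: good set at a later time} to ensure that, with probability $\ge 1 - e^{-\phi_{t_n}^{1/3}}$, the environments re-encountered between sub-blocks remain in $G(\cdot, \phi_{t_n})$ along the walker's range. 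The Borel-Cantelli conclusion then proceeds exactly as in Section \ref{subsection: Proof of Theorem lim inf to the right}: \eqref{admissible points typical} depends only on the product Bernoulli equilibrium measure and is therefore unchanged.

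The main obstacle is calibrating the initial step correctly, and in particular reconciling two competing constraints on the choice of $T = \gamma^{-\kappa}$: the coupling of $X$ with $\widetilde X$ requires $T$ small enough to freeze the environment in a neighbourhood of the walker's range, while the renormalization requires $T$ large enough that $\gamma T \ge \phi_T^3$ in order to feed into Proposition \ref{Proposition: good set at a later time}. If these conditions turn out to be incompatible for the naive coupling ``no exclusion jump occurs on the walker's range'', one should replace it with a coarser coupling that only demands the empirical densities in windows of intermediate size to be preserved --- which is precisely what Lemmas \ref{Lemma: estimate heat equation} and \ref{Lemma: heat equation short times} guarantee on diffusive time scales. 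A secondary technical issue, already flagged in the paper's introduction, is that Section \ref{Section: Dissipation of traps} only controls the excess of particles, whereas the small-$\gamma$ argument calls for a symmetric good set $G^\pm(\eta, L)$ controlling both tails of the empirical density $\langle \eta \rangle_{y, L'}$; the needed two-sided version of Proposition \ref{Proposition: good set at a later time} should follow by a routine reworking of the proofs of Section \ref{Section: Dissipation of traps}.
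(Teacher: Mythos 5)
Your plan follows the paper's broad architecture but misses the key technical obstruction specific to the small-$\gamma$ regime, namely the handling of the buffer step in the renormalization. In Section~\ref{Section: Drift to the right}'s renormalization, the initial buffer of length $r \in [t_n, 4t_n]$ is handled with the deterministic bound $X_r - X_0 \ge -r$, which works because the velocity $\mathsf v$ is a constant and the buffer cost $O(t_n)$ is negligible against the gain $\sim \mathsf v\, t_n^2$ from the sub-blocks. In the small-$\gamma$ case this breaks: your initial step gives $\mathsf v = T^{\delta_0-1}$, and since the static walk can be sub-ballistic ($\delta_0$ can be arbitrarily close to $0$), the buffer cost $\sim t_n$ can dominate $\mathsf v\, t_n^2$ for the first several renormalization scales. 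Equivalently, in the paper's rescaling $Y_t = X_{\gamma^{-1}t}$, the walker drifts of order $\gamma^{-\delta}$ per block, but the trivial bound gives only $Y_s \ge -\gamma^{-1}s$, a loss of a strictly larger order. The paper addresses this with an additional probabilistic estimate, Lemma~\ref{Lemma: crude lower bound}, which shows $P^\eta(X_{\gamma^{-1}s} \le -\gamma^{-\delta}s) \le e^{-s^{\delta/2}}$ for good initial environments; this replaces the deterministic buffer bound at the velocity scale $\gamma^{-\delta}s$ rather than the ballistic scale $\gamma^{-1}s$, and is woven into the renormalization via the case split $\vart'\le\gamma^{-1}$ vs.\@ $\vart'>\gamma^{-1}$ in the proof of Proposition~\ref{Proposition: small gamma}. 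The paper explicitly warns that ``the lack of a simple deterministic bound makes the whole proof considerably more heavy''; ``lifting'' the renormalization from Section~\ref{Section: Drift to the right} therefore does not close the induction.

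Two smaller inaccuracies. First, your claim of a stretched-exponential failure probability $e^{-T^{\alpha_0}}$ for the initial step is too strong: in the transient sub-ballistic static regime, the quenched slowdown probability in a good environment is only polynomial in $T$ (Lemma~\ref{briquestat}, part~2, gives $t^{-(1-C_K\delta)}$), and correspondingly the renormalization output in Proposition~\ref{Proposition: small gamma} is only $1/t^q$, not $e^{-\phi_t^{1/4}}$. This polynomial rate still suffices for Borel--Cantelli, but the inductive bookkeeping has to be set up around polynomial rates, which changes the failure events and exponents throughout. Second, your concern that a two-sided good set $G^\pm$ is needed is unfounded: since $\alpha < \beta$, a local \emph{shortage} of particles only accelerates the rightward drift, so only the excess needs control, and the one-sided estimates of Section~\ref{Section: Dissipation of traps} suffice (the paper's remark about two-sided control concerns the separate question of continuity of $v(\gamma)$ as $\gamma\to\infty$).
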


\noindent
Remark that the ellipticity condition \eqref{ellipticity} is not required. 
The cases where \eqref{static transience} holds while \eqref{ellipticity} is violated correspond to the cases $(\alpha = 1,\beta> 0)$ or $(\alpha >0,\beta=1)$. 
Since \eqref{static transience} still holds if the values or $\alpha$ or $\beta$ are lowered by a small enough amount, 
we conclude by a coupling argument that the hypothesis \eqref{ellipticity} can be added without loss of generality. 
We will thus assume that  \eqref{ellipticity} holds.

\subsection{Outline of the proof}
The overall strategy to prove Theorem \ref{Theorem: lim inf to the right} and Theorem \ref{Theorem: drift for small gamma} are completely analogous, but the actual implementation of the proofs differs in several respects. 
%though several technical issues prevented us to provide a unified proof of both theorems. 

The main new conceptual difficulty appears when initializing the renormalization procedure. 
Indeed, Theorem \ref{Theorem: lim inf to the right} was obviously valid in the formal limit ``$\gamma = \infty$", and we could easily provide the initial step in Lemma \ref{Lemma: Initial stpe large gamma}.
Theorem \ref{Theorem: drift for small gamma} becomes wrong though, in the limit $\gamma = 0$. 
Two competing factors determine the behavior of the walk in the regime $\gamma \to 0$ ($\gamma > 0$):
though the evolution of the walker is better and better approximated by its evolution in a static environment as $\gamma \to 0$, implying transience to the right, the dissipation of traps gets slower and slower
(the asymptotic velocity of the walker goes to 0 as $\gamma \to 0$).
Quantitative estimates are needed to compute the resulting effect.  

%The needed estimates on the behavior of the walker in a static environment ($\gamma = 0$) are expressed in Lemma \ref{briquestat} below: 
In Lemma \ref{briquestat} below, we provide the needed specific estimates expressing that, if $\gamma=0$, there exists $\delta > 0$ so that $X_t \sim t^{\delta}$ if the hypothesis \eqref{static transience} holds.
%there exists $\delta > 0$ so that $X_t \sim t^{\delta}$, if the hypothesis \eqref{static transience} holds. %(our estimates are a straightforward consequence of well known results). 
Taking now $\gamma > 0$ small enough, we show in Lemma  \ref{Lemma: bloc for small gamma} below
that the behavior of the walker is well approximated by its behavior in a static environment as long as  $t\sim \gamma^{-1}$, so that $X_t \sim t^\delta$ for $t \sim \gamma^{-1}$.
%Taking now $\gamma > 0$ but small enough, we can reasonably assume that still $X_t \sim t^{\delta}$ for $t \sim \gamma^{-1}$ (see Lemma  \ref{Lemma: bloc for small gamma} below). 
It proves convenient to look at the evolution of $Y_t := X_{\gamma^{-1}t}$ instead of $X_{t}$ (we do not use the notation $Y_t$  through the proof): 
First, since the environment evolves over time scales of order $\gamma^{-1}$, corresponding to one ``step" $Y_t - Y_{t-1}$, the dissipation of traps seen by the walker $(Y_t)_{t\ge 0}$ does not get slowed as $\gamma \to 0$.  
Second, the expected drift of $(Y_t)_{t\ge 0}$, $E_0(Y_1)$, does not vanish in the limit $\gamma \to 0$ (it is actually of order $\gamma^{-\delta}$). 
Therefore, for $(Y_t)_{t\ge 0}$, we can proceed as in the previous section: 
Lemma  \ref{Lemma: bloc for small gamma} will play the role of  Lemma \ref{Lemma: Initial stpe large gamma}, to initialize the procedure, 
and Proposition \ref{Proposition: small gamma} the role of Proposition \ref{Proposition: Renormalization large gamma}, where the renormalization is worked out. 
%the initial step is carried over in Lemma \ref{Lemma: bloc for small gamma} below, using the second part of Lemma \ref{briquestat} 
%(Lemma  \ref{Lemma: bloc for small gamma} plays the role of Lemma \ref{Lemma: Initial stpe large gamma} in the previous section), 
%and the renormalization procedure is worked out in Proposition \ref{Proposition: small gamma} below  (it plays the role of Proposition \ref{Proposition: Renormalization large gamma} in the previous section).
Finally, as $\gamma$ will eventually be fixed, the result for $(Y_t)_{t\ge 0}$ implies the result for $(X_t)_{t\ge 0}$.  

There is however still one technical problem hidden in the above description. 
In the previous section, we used repeatedly the deterministic bound $X_t - X_s > - (t-s)$ for $s\le t$. 
However, for $(Y_t)_{t\ge 0}$, this becomes $Y_t - Y_s > - \gamma^{-1}(t-s)$.
As this bound clearly deteriorates as $\gamma \to 0$, we need to replace it by a probabilistic estimate.
This is the aim of Lemma \ref{Lemma: crude lower bound} below (see also the comments above this lemma). 
Unfortunately, the lack of a simple deterministic bound makes the whole proof considerably more heavy. 

Several constants will appear through the proof. 
Since it matters to select them in a given order, we list all of them already now (though some of them will only be formally introduced later): 
\begin{enumerate}
\item
As before, we let the sequence $(\epsilon_L)_L$ be given by \eqref{epsilon sequence}.

\item
We fix the parameters of the model, $\alpha$, $\beta$ and $\rho$, so that \eqref{ellipticity} and \eqref{static transience} hold.

\item
We fix an exponent $q\ge 2$, that appears in Lemma \ref{Lemma: bloc for small gamma} and Proposition \ref{Proposition: small gamma} below. 
It is used to quantify the probability of the exceptional event where the the walker does not drift to the right (see \eqref{result proposition gamma small} below) 

\item
We fix $\tau > 0$ small enough so that the environment can be accurately approximated by a static environment on time intervals of length $\gamma^{-1}\tau$.
This is used in the proofs of Lemmas \ref{Lemma: bloc for small gamma} and \ref{Lemma: crude lower bound} below.

\item
We fix $K > 0$ large enough so that environments with no trap of size larger or equal to $K \ln L$ in a given region of size $L$, are typical with respect to the equilibrium measure.
This is used  in Lemmas \ref{Lemma: bloc for small gamma} and \ref{Lemma: crude lower bound} and in Proposition \ref{Proposition: small gamma} below.  
Incidentally, taking $K$ large enough allows also to successfully apply Propositions \ref{Proposition: good set at a later time} and \ref{Proposition: conservation of good sets}. 

\item
We fix $\delta>0$ small enough so that,  in a static environment, the walker drifts a distance $t^\delta$ to the right in a time $t$ with high probability.
It is used in Lemmas \ref{Lemma: bloc for small gamma} and \ref{Lemma: crude lower bound} and in Proposition \ref{Proposition: small gamma} below.  

\item
Finally, we take $\gamma>0$ small enough. 
\end{enumerate}

% SUBSECTION
\subsection{Static environment}\label{subsection: static}
In this section, we consider a random walk in a {static} random environment. We refer to \cite{Zeitouni} for background.
Given an environment $\omega \in[0,1]^\Z$, we define $S^\omega_x$ to be the law of the walker starting at time $0$ in $x \in \Z$, and evolving in the static environment $\omega$. 
We write $S^\omega$ for $S^\omega_0$. 
Given $\eta\in\{0,1\}^\Z$, we can define via \eqref{defomega} an environment $\omega(\eta)$. 
To simplify notations, we write $S^{\eta}$ for $S^{\omega(\eta)}$. 
Given a static environment $\omega$, we define the associated potential $V$ by
\begin{equation*}
V(0)=0 \qquad \textrm{and}\qquad\forall i\in\Z,\quad  V(i+1)-V(i)\; =\;\ln \frac{1-\omega_{i+1}}{\omega_{i+1}}.
\end{equation*}

\begin{Lemma}\label{briquestat}
Let $K$, and $\delta$ be positive numbers. 
Let $\tilde\rho \in [\rho,1[$ be small enough so that 
\begin{equation*}
\E^{\tilde\rho} \left( \ln \frac{1- \omega}{\omega} \right) \; < \; \frac{1}{2} \E^{\rho} \left( \ln \frac{1- \omega}{\omega} \right),%\qquad \text{with} \qquad \tilde\rho = (1+ \epsilon)\rho.
\end{equation*}
where $\E^{\rho}$ denotes the expectation with respect to $\P_{\rho}$.
\begin{enumerate}

\item
There exists $C>0$ so that, for $t$ large enough (depending on $\delta$), if the environment satisfies $\langle \eta \rangle_{-t^\delta/2,t^\delta/2} \le \tilde\rho$, then
\begin{equation}
\label{hypbs2}
S^\eta \left( \inf_{0 \le s \le t} X_s \le - t^\delta \right)%\; \le \; (t e^{t^{\delta}\E^{\tilde{\rho}}(\ln\frac{1-\omega}{\omega})})
\; \le \;  e^{-Ct^{\delta}}.
\end{equation}

\item
For $t$ large enough (depending on $\delta$ and $K$), if the environment satisfies
\begin{equation*}
\forall x \in \mathrm B(0,2t^\delta), \qquad \langle \eta \rangle_{x,K \ln t^\delta} \; \le \; \tilde\rho,
\end{equation*}
then, there exists $C_K>0$ such that
\begin{equation}
\label{hypbs1}
S^\eta (X_t \le t^{\delta}) 
%\; \le \; \frac{4}{\alpha}t^{2\delta-1+\delta K \ln(\frac{1-\alpha}{\alpha})}+2 t e^{\frac{t^{\delta}}{4}\E^{{\rho}}(\ln\frac{1-\omega}{\omega})}) \; \le \; \frac{5}{\alpha}t^{2\delta-1+\delta K \ln(\frac{1-\alpha}{\alpha})}
\; \le \; t^{-(1- C_K \delta)}.
\end{equation}
\end{enumerate}
\end{Lemma}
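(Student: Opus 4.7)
My plan uses classical potential-theoretic formulas for one-dimensional random walk in a static random environment (see \cite{Zeitouni}). With the potential $V$ defined in the text, the process $W(X_n) := \sum_{z<X_n} e^{V(z)}$ is a martingale under $S^\eta$, which yields by optional stopping the exit probability
$$
S^\eta(T_{-n}<T_m) \;=\; \frac{\sum_{y=0}^{m-1} e^{V(y)}}{\sum_{y=-n}^{m-1} e^{V(y)}},
$$
while a standard recursion for the one-step hitting times gives $E_{S^\eta}[T_n] = \sum_{y=0}^{n-1} \sum_{z \le y} e^{V(y)-V(z)}/\omega_z$. Writing $A = \ln\tfrac{1-\alpha}{\alpha}$, $B = \ln\tfrac{1-\beta}{\beta}$, the hypothesis on $\tilde\rho$ amounts to $\tilde\rho A + (1-\tilde\rho) B \le \tfrac{1}{2}\E^{\rho}(\ln\tfrac{1-\omega}{\omega}) =: -c < 0$.

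For Part 2, I would bound $E_{S^\eta}[T_{t^\delta}]$ and apply Markov's inequality. For any $z < y$ in $\mathrm{B}(0,2t^\delta)$ with $y-z \ge K\ln t^\delta$, covering $[z+1,y]$ by boxes of radius $K\ln t^\delta$ (each with empirical density at most $\tilde\rho$) gives the same density bound on $[z+1,y]$ up to $o(1)$, and hence $V(y)-V(z) \le -c'(y-z)$ with $c'>0$ coming from the strict choice of $\tilde\rho$. For $y-z < K\ln t^\delta$ the trivial bound $e^{V(y)-V(z)} \le t^{C_K \delta}$ suffices. Splitting the inner sum of $E_{S^\eta}[T_n]$ at $|y-z| = K \ln t^\delta$, the far part is a convergent geometric tail and the near part contributes $O(t^{C_K\delta})$; summing over $y \in [0,t^\delta-1]$ gives $E_{S^\eta}[T_{t^\delta}] = O(t^{\delta(1+C_K)})$, and Markov's inequality yields $S^\eta(X_t \le t^\delta) = S^\eta(T_{t^\delta}>t) \le t^{-(1 - C_K \delta)}$.

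For Part 1, I would bound the exit probability $S^\eta(T_{-t^\delta/2} < T_{t^\delta/2})$ via the formula above with $n = m = t^\delta/2$. The global density hypothesis together with the choice of $\tilde\rho$ yields the aggregate drop $V(t^\delta/2) - V(-t^\delta/2) \le -c t^\delta$. Combined with the fact that the increments of $V$ are uniformly bounded, this should force $\sum_{y=-t^\delta/2}^{-1} e^{V(y)}$ to exceed $\sum_{y=0}^{t^\delta/2 - 1} e^{V(y)}$ by a factor $e^{\Omega(t^\delta)}$, giving an exit probability at most $e^{-c t^\delta}$. The passage from $S^\eta(T_{-t^\delta/2} < T_{t^\delta/2})$ to $S^\eta(\inf_{s\le t} X_s \le -t^\delta)$ is then handled by observing that any trajectory achieving the latter must first reach $-t^\delta/2$ (while inside the controlled window) and then traverse a further $t^\delta/2$ in an unconstrained environment; a concentration estimate using the bounded per-step increment of the walker controls this second stage within the time budget $t$.

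The main obstacle is Part 1: extracting the exponential gap $e^{\Omega(t^\delta)}$ between the two half-sums from only global density information. Since pointwise control on $V$ within $[-t^\delta/2, t^\delta/2]$ is unavailable, the argument must rely crucially on the strict inequality $\E^{\tilde\rho}(\ln\tfrac{1-\omega}{\omega}) < \tfrac{1}{2} \E^{\rho}(\ln\tfrac{1-\omega}{\omega})$ as a quantitative transience margin, and then carefully couple the exit-probability estimate with the time constraint $s \le t$ by a suitable excursion decomposition at the origin; this is absent from Part 2, where Markov's inequality after the expected-hitting-time computation suffices.
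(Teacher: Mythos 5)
Your proposal has genuine gaps in both parts, and in each case you are headed toward a harder route than the one the paper actually takes.

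\textbf{Part 1.} The hypothesis $\langle\eta\rangle_{-t^\delta/2,\, t^\delta/2}\le \tilde\rho$ controls the empirical density in $\mathrm B(-t^\delta/2,t^\delta/2)=[-t^\delta,0]$, not in $[-t^\delta/2,t^\delta/2]$, so nothing whatsoever is known about $V$ to the right of $0$. Consequently the quantity $V(t^\delta/2)-V(-t^\delta/2)$ is not controlled, and the comparison of the two half-sums you flag as ``the main obstacle'' is not merely delicate, it is unavailable with the given hypothesis. The paper sidesteps this entirely by using the one-sided exit probability
\[
S^\eta(T_{-t^\delta}<T_1)\;=\;\frac{1}{\sum_{i=-t^\delta}^{0}e^{V(i)}}\;\le\; e^{-V(-t^\delta)},
\]
which needs only a lower bound on $V(-t^\delta)$. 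The density hypothesis on $[-t^\delta,0]$ yields exactly that: $V(-t^\delta)\ge -t^\delta\,\E^{\tilde\rho}(\ln\frac{1-\omega}{\omega})\ge -\tfrac12 t^\delta\,\E^{\rho}(\ln\frac{1-\omega}{\omega})>0$. Markov's property at successive returns to $0$ then gives the factor $t$ and the stated bound. Your ``excursion decomposition coupled with the time budget'' is not needed.

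\textbf{Part 2.} Two problems. First, $S^\eta(X_t\le t^\delta)\ne S^\eta(T_{t^\delta}>t)$: the walker may hit $t^\delta$ and come back, so the equality you use in the final step is false (it is actually $\ge$, the wrong direction). The paper handles this by decomposing $\{X_t\le t^\delta\}$ into three events: not having exited $[-2t^\delta,2t^\delta]$ by time $t$, having exited to the left, and having exited to the right at $2t^\delta$ but backtracked below $t^\delta$; the last two are controlled by Part 1. Second, your expected hitting time formula $E_{S^\eta}[T_n]=\sum_{y}\sum_{z\le y}\dots$ has an inner sum extending to $z\to-\infty$, and the density hypothesis only controls the environment in $\mathrm B(0,2t^\delta)$, so the ``far part'' of your sum is not in fact a convergent geometric tail; you have no control on $V(y)-V(z)$ for $z<-2t^\delta-O(\ln t)$. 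The paper avoids this by replacing $\omega$ by a modified environment $\tilde\omega$ outside $[-2t^\delta,2t^\delta]$ and invoking the exit-time recurrence from a \emph{bounded} interval, where all quantities depend only on the controlled window via the oscillation $[V]_{-2t^\delta,2t^\delta}\le K\ln(t^\delta)\ln\frac{1-\alpha}{\alpha}$. Your observation that each excursion above a minimum has length at most $K\ln t^\delta$ is the right ingredient, but it must be fed into the bounded-interval exit-time bound rather than into an unrestricted hitting-time sum.
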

Note that the assumption of point $2$ implies the assumption of point $1.$

\begin{proof}
For simplicity, to avoid the use of integer parts, we assume that $t^\delta$ is an integer. 

We start with point $1.$
Under the assumption $\langle \eta \rangle_{-t^\delta/2,t^\delta/2} \le \tilde\rho$,
\begin{equation*}
V(-t^{\delta}) \;\ge\; -t^{\delta}\E^{\tilde{\rho}}\left(\ln\frac{1-\omega}{\omega}\right) \;\ge\; -t^{\delta}\frac{\E^{{\rho}}(\ln\frac{1-\omega}{\omega})}{2}.
\end{equation*}
Recall that (see e.g.\@ \cite{Zeitouni})
\begin{equation*}
S^{\eta}(T_{-t^{\delta}}<T_1)=\frac{1}{\sum_{i=-t^\delta}^{0}e^{V(i)}}\leq e^{t^{\delta}\frac{\E^{{\rho}}(\ln\frac{1-\omega}{\omega})}{2}}.
\end{equation*}
Therefore, using Markov's property at successive return times in $0$, we obtain
\begin{equation*}
S^\eta \left( \inf_{0 \le s \le t} X_s \le - t^\delta \right)\leq t e^{t^{\delta}\frac{\E^{{\rho}}(\ln\frac{1-\omega}{\omega})}{2}}.
\end{equation*}
This concludes the proof of \eqref{hypbs2}.

We turn to point $2$. 
We first control the probability that the walker has not exited the interval $[-2t^{\delta},2t^{\delta}]$ at time $t$. Let $T:=\min\{k\geq 0, \ X_k\notin[-2t^{\delta},2t^{\delta}]\}$. Define the environment $\tilde{\omega}$ for $i\in\Z$ by
\begin{equation*}
\tilde{\omega}_i=\begin{cases}\omega_i \quad \textrm{ if } i \geq -2t^\delta ,\\ 0\quad \textrm{ if } i< -2t^\delta . \end{cases}
\end{equation*}
By an obvious coupling of $S^{\eta}$ and $S^{\tilde\omega}$,
\begin{equation*}
S^{\eta}(T\geq t) \leq  S^{\tilde\omega}(T_{2t^{\delta}}\geq t),
\end{equation*}
where $T_x$ denotes the hitting time of $x\in \Z$.
Using a classical recurrence (see \cite{Zeitouni} p.$59$ for example), we find
\begin{equation*}
E_{S^{\tilde\omega}}(T_{2t^{\delta}})\leq \frac{1}{\alpha}(4t^{\delta})^2 e^{[V]_{-t^{\delta},t^{\delta}}},
\end{equation*}
where for $a,b\in \Z$, $[V]_{a,b}=\max\{V(j)-V(i),\ 1\leq i\leq j \leq b\}$. 
%\begin{comment}
%We choose $\gamma$ small enough such that 
%\begin{equation*}
%(1+\epsilon_{K\ln\gamma^{-\delta}})\rho \ln\frac{1-\alpha}{\alpha}+\left( 1-(1+\epsilon_{K\ln\gamma^{-\delta}})\rho\right) \ln\frac{1-\beta}{\beta}<\frac{1}{2}\E(\ln\frac{1-\omega}{\omega})
%\end{equation*}
%\end{comment}
The assumption of point $2$ implies that any excursion above a minimum of $V$ has length at most $K\ln t^\delta$, and thus $[V]_{-2t^{\delta},2t^{\delta}}\leq K\ln(t^{\delta}) \ln(\frac{1-\alpha}{\alpha})$.
Finally, using Markov's inequality,
\begin{equation*}
S^{\eta}(T\geq t)\leq \frac{16}{\alpha}t^{2\delta-1+\delta K \ln(\frac{1-\alpha}{\alpha})}.
\end{equation*}

We turn to the probability that the walker is at the left of $t^\delta$ at time $t$:
\begin{equation*}
S^\eta(X_{t}\leq t^{\delta}) \leq S^\eta(T\geq t)+ S^\eta( \inf_{0 \le s \le t} X_s \le - 2t^\delta)+S^\eta(T_{2t^{\delta}}<t, X_{t} \leq t^\delta).
\end{equation*}
We have already controlled the first two terms. For the last one, we apply Markov's property at time $T_{t^\delta}$ and then we use \eqref{hypbs2} to derive 
\begin{equation*}
S^\eta(T_{2t^{\delta}}<t, X_{t} \leq t^\delta) \; \le \; t e^{\frac{t^{\delta}}{4}\E^{{\rho}}(\ln\frac{1-\omega}{\omega})} \;\le\; e^{-Ct^\delta}.
\end{equation*}
The largest of the three terms is thus the first one, at least for $t$ large enough. 
This concludes the proof of \eqref{hypbs1}.
\end{proof}

% SUBSECTION
\subsection{Initial step}

The assumption $\gamma t \ge L^3$ in Proposition \ref{Proposition: good set at a later time} will read $t \ge K^3\ln^3 (\gamma^{-\delta}t)$, for $t$ replaced by $\gamma^{-1}t$ and $L$ replaced by $K\ln (\gamma^{-\delta}t)$, 
as it will be the case in the renormalization procedure (see Proposition \ref{Proposition: small gamma} below).
%once applied at a time $\gamma^{-1}t$ and for a length $K\ln (\gamma^{-\delta}t)$ in the renormalization procedure in Section \ref{section: renormalization small gamma} below.
This procedure can thus only be initiated for times satisfying this bound, hence the choice of a window $\ln^4 (\gamma^{-1})\leq T\leq \ln^9(\gamma^{-1})$ in the next lemma:

\begin{Lemma}\label{Lemma: bloc for small gamma}
Let $q\ge 2$.
Let $K>0$ large enough then $\delta > 0$ small enough and then $\gamma > 0$ small enough. 
Let $\ln^4 (\gamma^{-1}) \leq T \leq \ln^9 (\gamma^{-1})$. 
Assume that the initial environment $\eta \in \{ 0,1 \}^\Z$ is such that 
\begin{equation*}
\forall x \in \mathrm B\left(0, (\gamma^{-\delta} T)^2 \right), \qquad x \in G \left(\eta, K \ln (\gamma^{-\delta}T)\right).
\end{equation*}
Then, 
\begin{equation*}
P^\eta(X_{\gamma^{-1}T} \; \leq \; \gamma^{-\delta} T) \; \leq \; \frac{1}{T^q} .
\end{equation*}
\end{Lemma}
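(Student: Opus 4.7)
The plan is to partition $[0,\gamma^{-1}T]$ into $N:=\lceil T/\tau\rceil$ sub-blocks $I_k:=[t_k,t_{k+1}]$ of length $\gamma^{-1}\tau$ (with $t_k:=k\gamma^{-1}\tau$), and to prove a per-sub-block drift of at least $(\gamma^{-1}\tau)^{\delta}$. Summing over the $N$ sub-blocks this accumulates to
\begin{equation*}
X_{\gamma^{-1}T}\;\ge\;N(\gamma^{-1}\tau)^{\delta}\;=\;\gamma^{-\delta}T\,\tau^{\delta-1}\;\ge\;\gamma^{-\delta}T,
\end{equation*}
since $\tau\le 1$ and $\delta<1$. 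Over a single sub-block the exclusion process has too little time to change the environment significantly, so the dynamical walker on $I_k$ can be compared with a walker in the frozen environment $\eta(t_k)$, which is a static environment and falls under Lemma \ref{briquestat} part~2.

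In order, I would (a)~verify the density hypothesis of Lemma \ref{briquestat} part~2 at each $t_k$: choosing the Lemma's scale constant $K^{\sharp}$ larger than the bloc-constant $K$ ensures that the Lemma's scale $L^{\sharp}:=K^{\sharp}\delta\ln(\gamma^{-1}\tau)$ exceeds $K\ln(\gamma^{-\delta}T)$ for $\gamma$ small (since $T\le\ln^{9}\gamma^{-1}$), so the bloc-hypothesis $y\in G(\eta,K\ln(\gamma^{-\delta}T))$ implies $\langle\eta\rangle_{y,L^{\sharp}}\le(1+\epsilon_{K\ln(\gamma^{-\delta}T)})\rho\le\tilde\rho$ throughout $B(0,(\gamma^{-\delta}T)^{2})$ at $t=0$; Proposition \ref{Proposition: conservation of good sets} then propagates this bound to every $t_k\le\gamma^{-1}T$ with failure probability exponentially small in $\delta\ln\gamma^{-1}$; (b)~couple the dynamical walker on $I_k$ with the frozen walker in $\eta(t_k)$ via common step Bernoullis, the coupling surviving until an exclusion jump affects a visited site, an event controlled for $\tau$ small by the slow jump rate $\gamma$; (c)~on the coupling success event, invoke Lemma \ref{briquestat} part~2 shifted around $X_{t_k}$ to obtain $X_{t_{k+1}}-X_{t_k}\ge(\gamma^{-1}\tau)^{\delta}$ with failure probability $(\gamma^{-1}\tau)^{-(1-C_{K^{\sharp}}\delta)}=O(\gamma^{1-C_{K^{\sharp}}\delta})$, and use Lemma \ref{briquestat} part~1 for the corresponding leftward excursion; (d)~union-bound over the $N$ sub-blocks and the spatial points: every individual failure is polynomial in $\gamma$, hence much smaller than $T^{-q}\ge(\log\gamma^{-1})^{-9q}$, which is only polylog in $\gamma$.

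The main obstacle is to ensure that $X_{t_k}$ remains in the good ball $B(0,(\gamma^{-\delta}T)^{2})$ for all $k\le N$, since Proposition \ref{Proposition: conservation of good sets} can only be applied around points whose $L^{\sharp}$-neighborhood sits inside this ball. The deterministic bound $|X_{t_k}|\le k\gamma^{-1}\tau$ gives only radius $\gamma^{-1}T$, and $\gamma^{-1}T\gg(\gamma^{-\delta}T)^{2}=\gamma^{-2\delta}T^{2}$ for $\gamma$ small, so a probabilistic confinement is needed. I would obtain it by enriching the per-sub-block estimate into a two-sided one: Lemma \ref{briquestat} part~1 yields the leftward bound $X_{t_{k+1}}-X_{t_k}\ge-(\gamma^{-1}\tau)^{\delta}$ with exponentially small failure probability, while an analogous rightward bound $X_{t_{k+1}}-X_{t_k}\le C(\gamma^{-1}\tau)^{\delta}$ would be extracted from the potential-based exit-time estimate in the proof of Lemma \ref{briquestat} part~2, by lower-bounding the time required to reach $C(\gamma^{-1}\tau)^{\delta}$. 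Inductively these force $|X_{t_k}|\le Ck(\gamma^{-1}\tau)^{\delta}\le C\gamma^{-\delta}T\ll(\gamma^{-\delta}T)^{2}$, so the stopping time $\sigma:=\inf\{k: X_{t_k}\notin B(0,(\gamma^{-\delta}T)^{2})\}$ exceeds $N$ with overwhelming probability, closing the argument up to $t_N=\gamma^{-1}T$.
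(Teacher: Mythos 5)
Your high-level framework matches the paper's: cut $[0,\gamma^{-1}T]$ into sub-blocks of length $\gamma^{-1}\tau$, control the environment at the sub-block endpoints via Proposition~\ref{Proposition: conservation of good sets}, and get a per-block drift via the static estimate of Lemma~\ref{briquestat}. But there is a genuine gap in your step~(b), and you are also over-complicating a second point that the paper handles more cleanly.

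The gap is in the coupling with the frozen environment $\eta(t_k)$. You claim the coupling survives until an exclusion jump affects a visited site, and that this is "controlled for $\tau$ small by the slow jump rate $\gamma$". That is not right: over a sub-block of duration $\gamma^{-1}\tau$, each edge rings a Poisson number of times with mean $\tau$, and the walker's range over the sub-block is of order $(\gamma^{-1}\tau)^{\delta}$ (deterministically up to $\gamma^{-1}\tau$), so the expected number of rings on edges inside the range is of order $\tau\,(\gamma^{-1}\tau)^{\delta}\to\infty$ as $\gamma\to 0$ for any fixed $\tau$. The probability that the coupling survives the sub-block is therefore not close to $1$; it is exponentially small. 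You cannot fix this by shrinking $\tau$ without making the number of sub-blocks blow up. The paper sidesteps the entire issue by comparing not with the frozen field $\eta(t_k)$ but with the \emph{envelope} $\tilde\eta_j := \max_{t_j\le s\le t_{j+1}}\eta(\gamma^{-1}s)$. Because $\tilde\eta_j$ has at least as many particles as the true field at every site throughout the sub-block, and since $\alpha<\beta$ more particles means smaller rightward drift, the dynamic walker stochastically dominates the static walker in $\tilde\eta_j$ \emph{for all time}; there is no coupling failure event to control. The event $B_j$ (bounded number of jumps in the space-time window) then guarantees $\tilde\eta_j$ still has density below a $\tilde\rho<1$ for which Lemma~\ref{briquestat} applies. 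This is the essential idea your proposal is missing.

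Your second concern — confinement of $X_{t_k}$ inside $B\left(0,(\gamma^{-\delta}T)^2\right)$ — is real, but the paper does not need the extra two-sided machinery you sketch. Instead it uses the nearest-neighbor monotone coupling: if $X_{\gamma^{-1}t_j}\ge x_j$ with $x_j:=j\gamma^{-\delta}\tau$, then the walker restarted from $x_j$ at time $\gamma^{-1}t_j$ is stochastically below the true walker, and proving a per-block drift for a walker started at the \emph{deterministic} points $x_j$ suffices. Since $x_j\le\gamma^{-\delta}T\ll(\gamma^{-\delta}T)^2$, the reference points stay inside the good ball automatically, and the product bound over $j$ is immediate. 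Your inductive two-sided confinement would require an upper bound on displacement that Lemma~\ref{briquestat} does not directly provide and is therefore both harder and unnecessary.
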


\begin{proof}
Let $\tau>0$ be a real number that we will fix later. Let us first define an event $E$ relative to the exclusion process alone.
We set 
\begin{equation*}
E = \bigcap_{j=0}^{T/\tau - 1} (A_{j} \cap B_{j}) ,
\end{equation*}
with, for $j=0,\cdots,T/\tau-1$,
\begin{itemize}
\item 
$(\eta(t))_{t\ge 0} \in A_{j}$ if and only if
\begin{equation*}
\forall x \in \mathrm B\left(0, (\gamma^{-\delta} T)^2 \right), \qquad \langle \eta (t_j) \rangle_{x,L} \; \le \; (1 + \epsilon_{L^{1/2}}) \rho
\qquad \text{where} \qquad
t_j = j\tau, \quad L = K \ln (\gamma^{-\delta}T),
\end{equation*}

\item
 $(\eta(t))_{t\ge 0} \in B_{j,}$ if and only if, for all $x \in \mathrm B (0, (\gamma^{-\delta} T)^2 )$, the number of jumps of the exclusion process that moves particles in the time-space window $[\gamma^{-1}t_j,\gamma^{-1} t_{j+1}] \times  [x - L,x +L]$ is bounded by $\tau^{1/2} (2L)$. 
 \end{itemize}
We then decompose 
\begin{equation}\label{decomposition initial step small gamma}
P^\eta(X_{\gamma^{-1}T} \; \leq \; \gamma^{-\delta} T)
\; \le \;
P^\eta(X_{\gamma^{-1}T} \; \leq \; \gamma^{-\delta} T | E) \, + \, P^\eta (E^c).
\end{equation}

Let us estimate the first term in the right hand side of \eqref{decomposition initial step small gamma}.
Let $\omega$ be an environment such that the corresponding $(\eta(t))_{t\ge 0}$ satisfies $(\eta(t))_{t\ge 0} \in E$. 
Let $x_j = j\gamma^{-\delta}\tau$ ($0 \le j \le T/\tau - 1$).
A coupling argument shows that
\begin{equation}\label{small gamma initial step bound as product}
P^\omega_{0,0}(X_{\gamma^{-1}T} \, > \, \gamma^{-\delta} T )
\; \ge \; 
\prod_{j=0}^{T/\tau - 1} P^\omega_{\gamma^{-1}t_j,x_j} \left( X_{\gamma^{-1}t_{j+1}} - X_{\gamma^{-1}t_j} > \gamma^{-\delta}\tau \right).
\end{equation}
For any $0 \le j \le T/\tau - 1$, we define $\tilde\eta$ by 
\begin{equation*}
\tilde\eta_j \; = \; \max_{t_j \le s \le t_{j+1}} \eta (\gamma^{-1}s). 
\end{equation*}
On $A_j\cap B_j$, it holds that 
\begin{equation*}
\langle \tilde\eta_j \rangle_{x_j,L}  \; \le \; (1 + \epsilon_{L^{1/2}} + \tau^{1/2}) \rho \; := \; \tilde\rho. 
\end{equation*}
For $\tau$ and $\gamma$ small enough, it holds that $\mathbb E^{\tilde\rho} ((1-\omega)/\omega) < \frac{1}{2}\mathbb E^{\rho} ((1-\omega)/\omega)$. 
It then follows from the second part of Lemma \ref{briquestat} with $t=\gamma^{-1}\tau$, that for $\gamma$ small enough,
\begin{multline}\label{first use of brique stat}
P^\omega_{\gamma^{-1}t_j,x_j} \left( X_{\gamma^{-1}t_{j+1}} - X_{\gamma^{-1}t_j} > \gamma^{-\delta}\tau \right)
\; \ge \;
S^{\tilde\eta_j}_{x_j} \left(  X_{\gamma^{-1}\tau} - x_j > \gamma^{-\delta}\tau \right) \\
\; \ge \; 
S^{\tilde\eta_j}_{x_j} \left(  X_{\gamma^{-1}\tau} - x_j > (\gamma^{-1} \tau)^\delta \right) 
\;\ge\;
1 - \gamma^{1 - C_{K} \delta}.
\end{multline}
Inserting this estimate in \eqref{small gamma initial step bound as product}, we find that, for $\delta>0$ small enough and $\gamma>0$ small enough, 
\begin{equation*}
P^\omega_{0,0}(X_{\gamma^{-1}T} \; > \; \gamma^{-\delta} T )
\; \ge \; 
\left( 1 - \gamma^{1 - C_{K}\delta} \right)^{T/\tau}
\; \ge \; 
\exp \left(- (T/\tau) \gamma^{1 - C_K\delta}/2\right).
\end{equation*}
Therefore, thanks to the hypothesis $\ln^4(\gamma^{-1}) \leq T \leq \ln^9(\gamma^{-1})$, it also holds that 
\begin{equation}\label{small gamma initial step first thing}
P^\eta(X_{\gamma^{-1}T} \;  \le \; \gamma^{-\delta} T | E)
\; \le \; 
1 -\exp \left(- (T/\tau) \gamma^{1 - C_K\delta}/2\right) 
\; \le \; T^{-(q+1)}. 
\end{equation}

We next come to the second term in the right hand side of  \eqref{decomposition initial step small gamma}:
\begin{equation}\label{small gamma initial step second part}
P^\eta (E^c) 
\; \le \; 
\sum_{j=0}^{T/\tau - 1} P^\eta \left( (A_{j} \cap B_{j})^c \right)
\; \le \; 
\sum_{j=0}^{T/\tau - 1} \left( P^\eta (A_{j}^c) + P^\eta (B_{j}^c|A_{j}) \right).
\end{equation}
First, applying Proposition \ref{Proposition: conservation of good sets} we find that for $\gamma$ small enough and $j=0,\cdots, T/\tau-1$,
\begin{equation*}
P^\eta (A_{j}^c) 
\; \le \; 
\sum_{x\in \mathrm B (0, (\gamma^{-\delta} T)^2)} P^{\eta}(\langle \eta(t_j)\rangle_{x,L} > (1 + \epsilon_{L^{1/2}})\rho)
\; \le \; C (\gamma^{-\delta} T)^2 \ed^{-c L (\epsilon_{L^{1/2}} - \epsilon_L)^2}
\; \le \; (\gamma^{-\delta}T)^{2 - c K}.
\end{equation*}
Next, by a classical concentration bound, for $\tau$ small enough, there exists $c(\tau)>0$ so that, for $j=0,\cdots, T/\tau-1$,
\begin{equation*}
 P^\eta (B_{j}^c|A_{j}) 
\; \le\;  
C (\gamma^{-\delta} T)^2 \ed^{-c(\tau) L} \; \le \; (\gamma^{-\delta}T)^{2 - c K}.
\end{equation*}
Inserting these two last bounds in \eqref{small gamma initial step second part}, we conclude that, if $K$ is large enough,
\begin{equation}\label{small gamma initial bloc before end}
P^\eta (E^c) 
\; \le \; 
T^{-(q+1)}.
\end{equation}

The result is obtained by inserting \eqref{small gamma initial step first thing} and \eqref{small gamma initial bloc before end} in \eqref{decomposition initial step small gamma}. 
\end{proof}

% SUBSECTION
\subsection{Rough lower bound for intermediate times}

Lemma \ref{Lemma: bloc for small gamma} derived above does not yet allow to initiate the renormalization procedure described in the proof of Proposition \ref{Proposition: small gamma} below.
Indeed, while it ensures that the walker moves a distance $\gamma^{-\delta}t$ to the right over a time $\gamma^{-1}t$, with probability $1-1/t^q$ for good initial environments, 
we only know that the walker does not move more than a distance  $\gamma^{-1}t$ to the left, with probability $1/t^q$ for good initial environments. 
Since Lemma \ref{Lemma: bloc for small gamma} is only valid for a time  $t\leq \ln^9(\gamma^{-1})$, we cannot yet exclude that $E^\eta (X_{\gamma^{-1}T})< 0$. % that the drift would be actually to the left. 
For intermediate times $t$ such that $T\le t \le \gamma^{-1}$, with $T$ as in Lemma \ref{Lemma: bloc for small gamma}, 
the next lemma furnishes a better lower bound than the deterministic bound $X_s \ge -s$ ($s\ge 0$). 
In particular, once combined with Lemma~\ref{Lemma: crude lower bound}, Lemma~\ref{Lemma: bloc for small gamma} ensures now that the walker drifts to the right over a time $\gamma^{-1}T$. 
We will use the full power of Lemma~\ref{Lemma: crude lower bound}, i.e.\@ when $T< t \le \gamma^{-1}$, for the first iterations in the proof of Proposition~\ref{Proposition: small gamma}, where the same type of difficulty shows up. 

%Therefore, as long as roughly $t < \gamma^{-1/q}$, we cannot yet exclude that the drift would be actually to the left. 
%In particular, Lemma \ref{Lemma: bloc for small gamma} is only valid for a time  $t\leq \ln^9(\gamma^{-1})$, so that this lemma cannot by itself serve as the initial step of our scheme
%(moreover, using the same proof, we cannot expect to generalize this result to times $t$ of the order of $\gamma^{-1/q}$ or larger, see \eqref{small gamma initial step first thing}).
%For intermediate times $t$ such that $T\le t \le \gamma^{-1}$, with $T$ as in Lemma \ref{Lemma: bloc for small gamma}, 
%the next lemma furnishes a better lower bound than the deterministic bound $X_s \ge -s$ ($s\ge 0$). 
%Lemmas \ref{Lemma: bloc for small gamma} and \ref{Lemma: crude lower bound} together guarantee that the walker drifts to the right over a time $\gamma^{-1}T$. 
% so as to make sure that the walker drifts to the right.
%Conclusions of Lemma \ref{Lemma: bloc for small gamma} are however valid only for $t\leq \ln^9(\gamma^{-1})$ and we can not expect, using the same proof, to generalize this result to times $t$ close to $\gamma ^{-1}$, 
%see \eqref{small gamma initial step first thing}. 

\begin{Lemma}\label{Lemma: crude lower bound}
Let $K>0$ large enough, then $\delta > 0$ small enough, and then $\gamma >0$ small enough. 
Assume that $T$ is chosen as in Lemma \ref{Lemma: bloc for small gamma}, i.e.\@ $\ln^4(\gamma^{-1}) \leq T \leq \ln^9(\gamma^{-1})$, and let $T \le t \le \gamma^{-1}$.
Assume that the initial state $\eta\in \{ 0,1\}^\Z$ is such that  
\begin{equation}\label{small gamma crude lower bound hypothesis}
\forall x \in \mathrm B\left(0, (\gamma^{-\delta} t)^2 \right), \qquad x \in G \left(\eta, K \ln (\gamma^{-\delta}t)\right).
\end{equation}
Then for all $1 \le s \le t$,  
\begin{equation*}
P^{\eta} \left(X_{\gamma^{-1}s} \le - \gamma^{-\delta} s\right) \; \le \;  \ed^{ -s^{\delta/2}} .
\end{equation*}
\end{Lemma}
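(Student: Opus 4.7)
My plan is to mimic the block-decomposition strategy of Lemma~\ref{Lemma: bloc for small gamma}, but now extract an exponentially small failure probability by exploiting the left-tail estimate~\eqref{hypbs2} of Lemma~\ref{briquestat} on each block. Set $t_j = j\tau$ for $0 \le j \le s/\tau - 1$. Since the walker moves in increments of $\pm 1$, a total leftward displacement of at least $\gamma^{-\delta}s$ forces at least one block to contribute a leftward displacement of at least $\gamma^{-\delta}\tau$, hence
\begin{equation*}
\{X_{\gamma^{-1}s} \le -\gamma^{-\delta}s\} \;\subseteq\; \bigcup_{j=0}^{s/\tau-1}\bigl\{X_{\gamma^{-1}t_{j+1}} - X_{\gamma^{-1}t_j} \le -\gamma^{-\delta}\tau\bigr\},
\end{equation*}
and it suffices to bound each block event and sum.

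On each block, I would freeze the environment as $\tilde\eta_j := \sup_{\gamma^{-1}t_j \le r \le \gamma^{-1}t_{j+1}} \eta(r)$; since $\alpha < \beta$, a monotone coupling yields, conditionally on $X_{\gamma^{-1}t_j} = x$,
\begin{equation*}
P^{\eta}\bigl(X_{\gamma^{-1}t_{j+1}} - X_{\gamma^{-1}t_j} \le -\gamma^{-\delta}\tau \,\big|\, X_{\gamma^{-1}t_j} = x\bigr) \;\le\; S^{\tilde\eta_j}_{x}\Bigl(\inf_{0 \le r \le \gamma^{-1}\tau}X_r \le x - \gamma^{-\delta}\tau\Bigr).
\end{equation*}
I would then pick a spatial scale $\mathcal L$ of order $\gamma^{-\delta}\tau$, at once much larger than the hypothesis threshold $K\ln(\gamma^{-\delta}t)$ (so that the initial density hypothesis covers boxes of size $\mathcal L$) and comparable to the forbidden displacement (so that the static estimate remains sharp). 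Proposition~\ref{Proposition: conservation of good sets} together with a standard concentration bound on the number of exclusion jumps crossing $[x - \mathcal L, x + \mathcal L]$ during time $\gamma^{-1}\tau$ would produce, for a constant $\tilde\rho \in (\rho, 1)$ chosen close enough to $\rho$ that $\E^{\tilde\rho}(\ln\tfrac{1-\omega}{\omega}) < \tfrac{1}{2}\E^{\rho}(\ln\tfrac{1-\omega}{\omega})$,
\begin{equation*}
P^{\eta}\bigl(\langle\tilde\eta_j\rangle_{x,\mathcal L} \ge \tilde\rho\bigr) \;\le\; \ed^{-c\mathcal L}.
\end{equation*}

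On the complementary event, the potential of $\tilde\eta_j$ climbs by at least $c\gamma^{-\delta}\tau$ between $x$ and $x - \gamma^{-\delta}\tau$, so reproducing the argument of Lemma~\ref{briquestat} part~1 with displacement $\gamma^{-\delta}\tau$ in place of $t^\delta$ and time $\gamma^{-1}\tau$ in place of $t$ gives the per-block estimate $\gamma^{-1}\tau\cdot\ed^{-C\gamma^{-\delta}\tau}$. Summing over the $s/\tau$ blocks and taking a union bound over the at most $O(\gamma^{-1}s)=O(\gamma^{-2})$ possible values of $X_{\gamma^{-1}t_j}$ (all lying in $\mathrm B(0,(\gamma^{-\delta}t)^2)$ for $\gamma$ small enough) yields
\begin{equation*}
P^{\eta}\bigl(X_{\gamma^{-1}s} \le -\gamma^{-\delta}s\bigr) \;\le\; C\gamma^{-3}\tau^{-1}\bigl(\gamma^{-1}\tau\cdot\ed^{-C\gamma^{-\delta}\tau} + \ed^{-c\mathcal L}\bigr),
\end{equation*}
which for $\gamma$ small is dominated by $\ed^{-c'\gamma^{-\delta}\tau}$, itself below $\ed^{-s^{\delta/2}}$ throughout $1 \le s \le t \le \gamma^{-1}$ because $s^{\delta/2}\le\gamma^{-\delta/2}\ll\gamma^{-\delta}\tau$.

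The main technical obstacle is the simultaneous tuning of the spatial scale $\mathcal L$ and the density slack $\tilde\rho-\rho$: $\mathcal L$ must be large (polynomial in $\gamma^{-1}$) for the density-failure factor $\ed^{-c\mathcal L}$ to absorb the polynomial union-bound cost, yet $\tilde\rho-\rho$ must stay below the threshold beyond which $\E^{\tilde\rho}(\ln\tfrac{1-\omega}{\omega})$ ceases to be negative, so that the potential estimate underlying Lemma~\ref{briquestat} retains its exponent linear in $\gamma^{-\delta}\tau$. Ensuring moreover that the walker stays in $\mathrm B(0,(\gamma^{-\delta}t)^2)$ on $[0,\gamma^{-1}s]$, so that the density hypothesis actually applies at its current position, is immediate from the deterministic bound $|X_{\gamma^{-1}s}|\le\gamma^{-1}s$ when $t \ge \gamma^{-1+2\delta}$, and otherwise must be supplemented by a ballistic-speed estimate whose contribution is absorbed into the same $\ed^{-s^{\delta/2}}$ bound.
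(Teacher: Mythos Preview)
Your block decomposition, the freezing coupling $\tilde\eta_j$, and the appeal to part~1 of Lemma~\ref{briquestat} are exactly the right ingredients, and this is the same skeleton as the paper's proof. But there is a genuine gap in how you localize the density hypothesis.

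You condition on the walker's actual position $X_{\gamma^{-1}t_j}=x$ and then want to apply the density control~\eqref{small gamma crude lower bound hypothesis} at $x$. The trouble is that the deterministic bound $|X_{\gamma^{-1}t_j}|\le \gamma^{-1}t_j\le \gamma^{-1}t$ can vastly exceed $(\gamma^{-\delta}t)^2$ whenever $t<\gamma^{-1+2\delta}$, which is essentially the whole range $[T,\gamma^{-1}]$ since $T$ is only polylogarithmic in $\gamma^{-1}$. In that regime the walker may sit far outside $\mathrm B(0,(\gamma^{-\delta}t)^2)$ and you have no density information there; neither Proposition~\ref{Proposition: conservation of good sets} nor Lemma~\ref{briquestat} can then be invoked. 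Your proposed patch --- a ``ballistic-speed estimate'' --- is not available at this stage (Lemma~\ref{Lemma: bloc for small gamma} only works at the single scale $T$, and Proposition~\ref{Proposition: small gamma} is what you are trying to feed), so the argument becomes circular. The extra union bound over $O(\gamma^{-2})$ positions is also unnecessary (you should integrate against the law of $X_{\gamma^{-1}t_j}$, not union-bound), though that is merely wasteful rather than fatal.

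The paper sidesteps this entirely by using \emph{deterministic} barrier positions $x_j=-j(\tau\gamma^{-1})^\delta$, with $L=(\tau\gamma^{-1})^\delta/2$, rather than the walker's random location. A monotone coupling shows
\[
P^\omega_{0,0}\bigl(X_{\gamma^{-1}s}\le -\gamma^{-\delta}s\bigr)\;\le\;\sum_{j=0}^{s/\tau-1} P^\omega_{\gamma^{-1}t_j,\,x_j}\bigl(X_{\gamma^{-1}t_{j+1}}-X_{\gamma^{-1}t_j}\le -2L\bigr),
\]
and the point is that every $x_j$ satisfies $|x_j|\le (s/\tau)(\tau\gamma^{-1})^\delta\le t\,\tau^{\delta-1}\gamma^{-\delta}$, which is easily inside $(\gamma^{-\delta}t)^2$ for $\gamma$ small (since $t\ge T\ge\ln^4\gamma^{-1}$). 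Thus the density hypothesis is always available where it is needed, and the events $A_j$, $B_j$ can be defined at the fixed locations $x_j-L$ rather than at the unknown walker position. If you replace your conditioning step by this barrier coupling, the rest of your argument goes through essentially unchanged and yields the final bound $C\gamma^{-1}\tau^{-1}\ed^{-cL}\le \ed^{-\gamma^{-\delta/2}}\le \ed^{-s^{\delta/2}}$.
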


\begin{proof}
We first notice that the hypothesis \eqref{small gamma crude lower bound hypothesis} implies 
\begin{equation}\label{small gamma rewriting hypothesis crude lower bound}
\forall x \in \mathrm B\left(0, (\gamma^{-\delta} t)^2 \right), \qquad x \in G \left(\eta, (\tau \gamma^{-1})^\delta/2\right), 
\end{equation}
provided that $(\epsilon_L)_{L\ge 0}$ is no longer given by \eqref{epsilon sequence}, but is such that 
\begin{equation}\label{small gamma local definition of epsilon sequence}
\epsilon_{(\tau \gamma^{-1})^\delta/2} \; = \; \frac{1}{1 + \ln \left( 1 + \ln (\gamma^{-\delta}t) \right)}.
\end{equation}
For the present proof, we assume that \eqref{small gamma local definition of epsilon sequence} holds instead of \eqref{epsilon sequence}, and we use directly the hypothesis \eqref{small gamma rewriting hypothesis crude lower bound} instead of \eqref{small gamma crude lower bound hypothesis}. 

Fix $1\leq s\leq t$ and define an event $E$ relative to the exclusion process alone.
We set 
\begin{equation*}
E = \bigcap_{j=0}^{s/\tau - 1} (A_{j} \cap B_{j}),
\end{equation*}
with
\begin{itemize}
\item  
$(\eta (t))_{t\ge 0} \in A_{j}$ if and only if
\begin{equation*}
\langle \eta (\gamma^{-1}t_j) \rangle_{x_j-L,L} \; \le \; (1 + \epsilon_{L^{1/2}}) \rho \qquad \text{where} \qquad
t_j = j \tau, \quad
x_j = -k (\tau\gamma^{-1})^\delta, \quad
L = (\tau \gamma^{-1})^\delta/2, 
\end{equation*}

\item  
$(\eta(t))_{t\ge 0} \in B_{j}$ if and only if the number of jumps of the exclusion process that moves particles in the time-space window $[x_j - 2L,x_j] \times [\gamma^{-1}t_j, \gamma^{-1} t_{j+1}]$ is bounded by $\tau^{1/2} (2L)$. 
\end{itemize}
We then decompose 
\begin{equation}\label{decomposition rough lower bound small gamma}
P^\eta \left( X_{\gamma^{-1}s} \le - \gamma^{-\delta}s  \right)
\; \le \;
P^\eta \left( X_{\gamma^{-1}s} \le - \gamma^{-\delta}s |\, E \right) \, + \, P^\eta (E^c).
\end{equation}

Let us estimate the first term in the right hand side of \eqref{decomposition rough lower bound small gamma}.
Let $\omega$ be an environment such that the corresponding $(\eta(t))_{t\ge 0}$ satisfies $(\eta(t))_{t\ge 0} \in E$.
A coupling argument shows that 
\begin{equation}\label{small crude lower bound real first thing}
P^\omega_{0,0} \left( X_{\gamma^{-1}s} \le - \gamma^{-\delta}s\right)
\; \le \; 
\sum_{j=0}^{s/\tau - 1} P^\omega_{\gamma^{-1}t_j,x_j} \left(  X_{\gamma^{-1}t_{j+1}} - X_{\gamma^{-1}t_j} \le -2L  \right).
\end{equation}
For any $0 \le j \le s/\tau - 1$, we define $\tilde\eta$ by 
\begin{equation*}
\tilde\eta_j \; = \; \max_{t_j \le r \le t_{j+1}} \eta (\gamma^{-1}r). 
\end{equation*}
It holds that 
\begin{equation*}
\langle \tilde\eta_j \rangle_{x_j-L,L}  \; \le \; (1 + \epsilon_{L^{1/2}} + \tau^{1/2}) \rho \; := \; \tilde\rho. 
\end{equation*}
For $\tau$ small enough and $\gamma$ small enough, it holds that $\E^{\tilde\rho} ((1-\omega)/\omega) < \frac{1}{2}\E^{\rho} ((1-\omega)/\omega)$. 
By a coupling argument and the first part of Lemma \ref{briquestat}, we obtain, for some $c>0$, 
\begin{equation*}%\label{small gamma crude lower bound first thing}
P^\omega_{\gamma^{-1}t_j,x_j} \left(  X_{\gamma^{-1}t_{j+1}} - X_{\gamma^{-1}t_j} \le -2L \right)
\; \le \; 
S^{\tilde\eta_j}_{x_j} \left( X_{\gamma^{-1}\tau} - x_j \le - 2L \right) 
\; \le \;  \ed^{-cL}.
\end{equation*}
Therefore, inserting this estimate in \eqref{small crude lower bound real first thing}, it holds that $P^\omega_{0,0} \left( X_{\gamma^{-1}s} \le - \gamma^{-\delta}s\right) \le (s/\tau)\ed^{-cL}$, so that 
\begin{equation}\label{conclusion small crude lower bound real first thing}
P^\eta \left( X_{\gamma^{-1}s} \le - \gamma^{-\delta}s |\, E \right) \;\le\; \frac{s}{\tau} \ed^{-cL}.
\end{equation}

For the second term in the right hand side of \eqref{decomposition rough lower bound small gamma}, we have too
\begin{equation}\label{small gamma crude lower bound second thing}
P^\eta (E^c) 
\; \le \; 
\sum_{j=0}^{s/\tau - 1} P^\eta \left( (A_{j} \cap B_{j})^c \right)
\; \le \; 
\sum_{j=0}^{s/\tau - 1} \left( P^\eta (A_{j}^c) + P^\eta (B_{j}^c|A_{j}) \right) 
\; \le \;
\frac{s}{\tau} \ed^{-c L}, 
\end{equation}
since, applying Proposition \ref{Proposition: conservation of good sets} we find $P^\eta (A_{j}^c) \le \ed^{-c L}$ if $K$ is large enough, while, if $\tau$ is chosen small enough, a classical concentration bound furnishes $ P^\eta (B_{j}^c|A_{j}) \le \ed^{-cL}$.

Inserting \eqref{conclusion small crude lower bound real first thing} and \eqref{small gamma crude lower bound second thing} into \eqref{decomposition rough lower bound small gamma}, we obtain for $\gamma$ large enough,
\begin{equation*}
P^\eta \left( X_{\gamma^{-1}s} \le - \gamma^{-\delta}s  \right)
\; \le \; \Const \gamma^{-1} \tau^{-1} \ed^{- cL} \;\le \; \ed^{-\gamma^{-\delta/2}} \; \le \; \ed^{-s^{\delta/2}}.
\end{equation*}
since $s\le \gamma^{-1}$.
\end{proof}

% SUBSECTION
\subsection{Renormalization procedure}\label{section: renormalization small gamma}

We use Lemma \ref{Lemma: bloc for small gamma} as initial step, Proposition \ref{Proposition: good set at a later time} to guarantee the dissipation of possible traps with high probability, and Lemma \ref{Lemma: crude lower bound} as lower bound for intermediate times, to derive

\begin{Proposition}\label{Proposition: small gamma}
Let $q\ge 2$.
Let $K$ be large enough, then $\delta > 0$ small enough and finally $\gamma > 0$ small enough. 
Fix $t \ge \ln^9 (\gamma^{-1})$ and assume that the initial environment $\eta\in \{ 0,1\}^\Z$ is such that  
\begin{equation*}
\forall x \in \mathrm B\left(0,(\gamma^{-\delta}t)^2\right), \qquad x \in G \left(\eta, K \ln (\gamma^{-\delta}t) \right).
\end{equation*}
Then, 
\begin{equation}\label{result proposition gamma small}
P^{\eta} (X_{\gamma^{-1}t} \le \gamma^{-\delta}t/2) \; \le \;  1/t^q.
\end{equation}
\end{Proposition}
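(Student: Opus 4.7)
The plan is to carry out a multi-scale renormalization argument parallel to the one in Proposition \ref{Proposition: Renormalization large gamma}, with three substitutions: Lemma \ref{Lemma: bloc for small gamma} replaces Lemma \ref{Lemma: Initial stpe large gamma} as the base case; Proposition \ref{Proposition: good set at a later time} again handles the dissipation of traps, but now applied at the logarithmic scale $K\ln(\gamma^{-\delta}t)$ instead of at the polynomial scale $\phi_t$; and, crucially, the deterministic bound $X_t - X_s \ge -(t-s)$ that closed the large-$\gamma$ iteration is replaced by the probabilistic estimate of Lemma \ref{Lemma: crude lower bound}, since $\gamma^{-1}(t-s) \gg \gamma^{-\delta}(t-s)$ would be a far too crude loss for small $\gamma$. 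All trajectories are thus naturally rescaled, with time unit $\gamma^{-1}$ and displacement unit $\gamma^{-\delta}$.

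First I would fix $t_0 \in [\ln^4(\gamma^{-1}), \ln^9(\gamma^{-1})]$, construct a sequence $(t_n)_{n\ge 0}$ with $t_{n+1}\in [t_n^2,(t_n+1)^2]$ terminating at $t_N = t$, and set $c_n = (3 - \sum_{k=0}^n 2^{-k})/2$, so that $c_n \searrow 1/2$. The inductive statement to prove by recursion on $n \ge 0$ is: whenever $\eta$ satisfies $\mathrm B(0,(\gamma^{-\delta}t_n)^2) \subset G(\eta, K\ln(\gamma^{-\delta}t_n))$,
\[ P^\eta\bigl(X_{\gamma^{-1}t_n} \le c_n \gamma^{-\delta} t_n\bigr) \le 1/t_n^q. \]
The base case $n=0$ is exactly Lemma \ref{Lemma: bloc for small gamma}, applied with a slightly larger exponent than $q$ so as to leave margin for the iteration.

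For the inductive step I would write $t_{n+1} = r + (t_n - 1)t_n$ with $t_n \le r < 4 t_n$ and treat the initial window $[0, \gamma^{-1}r]$ separately: Lemma \ref{Lemma: crude lower bound} yields $X_{\gamma^{-1}r} \ge -\gamma^{-\delta}r$ with stretched-exponential probability, while Proposition \ref{Proposition: good set at a later time}, invoked with $J = K\ln(\gamma^{-\delta}t_n)$, $L = K\ln(\gamma^{-\delta}t_{n+1})$ and time $\gamma^{-1}r$ (so that the hypothesis $\gamma t \ge L^3$ reduces to $r \ge K^3 \ln^3(\gamma^{-\delta}t_{n+1})$, verified by the choice of $t_0$), guarantees that after this waiting time the environment is good at scale $K\ln(\gamma^{-\delta}t_n)$ at every point of a box of radius $(\gamma^{-\delta}t_n)^2$. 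This restores the hypothesis of the inductive step at scale $t_n$, which can then be iterated over the $t_n-1$ successive windows of length $\gamma^{-1}t_n$. Each increment $X_{\gamma^{-1}(r+kt_n)} - X_{\gamma^{-1}(r+(k-1)t_n)}$ is coupled from below with an i.i.d.\@ variable $Y_k$ equal to $c_n \gamma^{-\delta} t_n$ with probability $1 - 1/t_n^q$ and to $-\gamma^{-\delta}t_n$ otherwise, the latter outcome being enforced by a fresh application of Lemma \ref{Lemma: crude lower bound} on each block. Since $\mathsf E Y_k \ge \frac{c_n + c_{n+1}}{2}\gamma^{-\delta}t_n$ and $|Y_k| \le \gamma^{-\delta}t_n$, Hoeffding's inequality provides a stretched-exponential bound on $P(\sum_k Y_k \le c_{n+1}\gamma^{-\delta}t_{n+1})$, easily absorbed into $1/t_{n+1}^q$.

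The main obstacle, compared to the large-$\gamma$ case, is that the basic bounds at each scale are only polynomial in $t$; a naive union bound over the $t_n$ blocks would already lose one power of $t_n$ and prevent the recursion from closing. This forces the proof not to union-bound over the blockwise failures of the inductive hypothesis, but rather to work with the stochastically dominating i.i.d.\@ sum $\sum Y_k$ and exploit concentration, so that the polynomial loss reappears only through the mean of the $Y_k$, where it is negligible. One must then select the parameters in the order prescribed after the outline ($q$, then $\tau$, then $K$, then $\delta$, then $\gamma$) so that the stretched-exponential errors from Lemma \ref{Lemma: crude lower bound}, the dissipation error from Proposition \ref{Proposition: good set at a later time} — polynomial here, since the trap-dissipation scale is $\log t$ rather than $\phi_t$ — and the Hoeffding error all combine to stay below $1/t_{n+1}^q$ at every step.
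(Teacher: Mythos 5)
Your outline matches the paper's proof in its broad architecture: Lemma \ref{Lemma: bloc for small gamma} as base case, Proposition \ref{Proposition: good set at a later time} for trap dissipation at logarithmic scale, Lemma \ref{Lemma: crude lower bound} to replace the deterministic bound on the initial window, stochastic domination of each inner increment by a two-valued variable $Y_k$, and the same doubling scheme $t_{n+1}\approx t_n^2$, $c_n\searrow 1/2$. Your diagnosis of why a naive union bound would not close is also correct. But there is a genuine gap in the concentration step, and it originates in an issue you have not noticed.

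\textbf{Lemma \ref{Lemma: crude lower bound} stops being available once $t_n$ exceeds $\gamma^{-1}$.} That lemma is stated only for $T\le t\le\gamma^{-1}$, i.e.\@ walker-time horizons up to $\gamma^{-2}$. Your proposal defines $Y_k = -\gamma^{-\delta}t_n$ with probability $t_n^{-q}$, justified ``by a fresh application of Lemma \ref{Lemma: crude lower bound} on each block''. This works only as long as each inner block has length $\gamma^{-1}t_n\le\gamma^{-2}$, i.e.\@ $t_n\le\gamma^{-1}$. Once $t_n>\gamma^{-1}$, the only lower bound for a bad block is the deterministic one $-\gamma^{-1}t_n$, and the paper indeed switches to $Y_k=-\gamma^{-1}t_n$ in that regime (equation \eqref{small gamma Y large t}, and the case distinction \eqref{a first estimate in the inductive step for small gamma small t}--\eqref{a first estimate in the inductive step for small gamma large t}). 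Your proposal does not distinguish this case.

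\textbf{Hoeffding then fails exactly where you need it.} With the corrected $Y_k$, the range of each summand becomes of order $\gamma^{-1}t_n$, while the deviation one must rule out is only of order $(c_n-c_{n+1})\gamma^{-\delta}t_n^2$. At the first scale where $t_{n+1}>\gamma^{-1}$ one has $t_n\approx\gamma^{-1/2}$, and the Hoeffding exponent is of order
\[
\frac{(c_n-c_{n+1})^2\,\gamma^{-2\delta}t_n^4}{t_n\cdot\gamma^{-2}t_n^2}
\;=\;
(c_n-c_{n+1})^2\,\gamma^{2-2\delta}t_n
\;\approx\;
(c_n-c_{n+1})^2\,\gamma^{3/2-2\delta},
\]
which tends to $0$ as $\gamma\to 0$; the bound degenerates. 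The distribution of $Y_k$ is heavy-tailed on the left (a rare but enormous negative value), and a sub-Gaussian inequality is the wrong tool. The paper instead introduces the event $A_p$ that at least $p$ of the $Y_k$ are negative, bounds $P(A_p)\le t_n^{-(q-1)p}$ by a counting argument (using $q\ge 2$ and $p=3(q+1)$), and shows that on $A_p^c$ the sum $\sum Y_k$ deterministically exceeds the target (the key inequality $(c_n-c_{n+1})t_n>C(p)$, which survives the $\gamma^{-1}t_n$ loss per bad block because there are at most $p$ of them). This is what makes the recursion close with only a polynomial-in-$t_n$ error per scale; your Hoeffding step does not.

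As a minor point, the base case does not need ``a slightly larger exponent than $q$''; Lemma \ref{Lemma: bloc for small gamma} already delivers $1/T^q$, with the margin $1/T^{q+1}$ being produced internally in its proof, and the same exponent $q$ is propagated unchanged through the recursion.
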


\begin{proof}
We take $K< + \infty$, $\delta > 0$ and $\gamma > 0$ such that Lemmas \ref{Lemma: bloc for small gamma} and \ref{Lemma: crude lower bound} hold. 
Given $t\geq \ln^9 (\gamma^{-1})$, we define a sequence $(t_n)_{n\ge 0}$ with $t_0 \in [\ln^4\gamma^{-1},\ln^9\gamma^{-1}]$ and for $n\geq 0$, $t_{n+1}\in[t_{n}^2,(t_n+1)^2]$, such that for some $N\ge 0$, $t_N=t$. 
%(for example define recursively, from $t_N=t$ with the suitable $N$, $t_{k-1}=\lfloor \sqrt{t_k} \rfloor$ until reaching the interval $[\ln^4\gamma^{-1},\ln^9\gamma^{-1}]$).
Recall the definition of $(c_n)_{n\ge 0}$ in \eqref{cn sequence} with $\mathsf v = 1$ so that the sequence is decreasing from $1$ to $1/2$. 
We prove by recurrence that, given $\eta\in \{ 0,1\}^\Z$ and $n\ge 0$, if, for any $x\in \mathrm B(0,(\gamma^{-\delta}t_n)^2)$, it holds that $x \in G(\eta, K \ln \gamma^{-\delta}t_n )$, then 
\begin{equation}\label{drift for small gamma: to show by recurrence}
P^{\eta} (X_{\gamma^{-1}t_n} \le c_n \gamma^{-\delta} t_n) \; \le \; t_n^{-q}. 
\end{equation}
This will imply the claim. 

By Lemma \ref{Lemma: bloc for small gamma} and by the hypotheses, \eqref{drift for small gamma: to show by recurrence} holds true for $n=0$.
We now assume that \eqref{drift for small gamma: to show by recurrence} holds for some $n\ge 0$, and we show that it implies it for $n+1$.  
To simplify notations, let us write $t_n = {\vart}$ and $t_{n+1} = {\vart}'$, as well as $c_n = \varc$ and $c_{n+1} = \varc'$. 
It holds that  ${\vart}' = r + ({\vart} - 1){\vart}$ for some ${\vart} \le r < 4{\vart}$. 
The cases ${\vart}' \le \gamma^{-1}$ and ${\vart}' > \gamma^{-1}$ are treated differently.  
We first write a bound analogous to \eqref{a first estimate in the inductive step} in the proof of Proposition \ref{Proposition: Renormalization large gamma}:
\begin{align}
P^{\eta} (X_{\gamma^{-1}{\vart}'}  \le \varc' \gamma^{-\delta}{\vart}' )
\; &\le \; 
P^{\eta} \left( X_{\gamma^{-1}{\vart}'} - X_{\gamma^{-1}r} \le \varc' \gamma^{-\delta} {\vart}' + \gamma^{-\delta}r \right)
\, + \, 
\ed^{-{\vart}^{\delta/2}}
& \text{if} \quad {\vart}' \le \gamma^{-1}, 
\label{a first estimate in the inductive step for small gamma small t} \\
P^{\eta} (X_{\gamma^{-1}{\vart}'}  \le \varc' \gamma^{-\delta}{\vart}' )
\;& \le \; 
P^{\eta} \left( X_{\gamma^{-1}{\vart}'} - X_{\gamma^{-1}r} \le \varc' \gamma^{-\delta} {\vart}' + \gamma^{-1}r \right)
& \text{if} \quad{\vart}' > \gamma^{-1},
\label{a first estimate in the inductive step for small gamma large t}
\end{align}
where \eqref{a first estimate in the inductive step for small gamma small t} is derived from Lemma \ref{Lemma: crude lower bound} while the bound $X_{\gamma^{-1}r}\geq -\gamma^{-1}r$ in \eqref{a first estimate in the inductive step for small gamma large t} is deterministic.

We need to evaluate the the right hand side of either of these bounds. 
For this, we define on the same probability space (enlarged if necessary) a sequence $(Y_k)_{k\ge 1}$ of i.i.d.\@ random variables independent from both the exclusion process and the walker with distribution: 
\begin{align}
&P^{\eta} \left(Y_k  = \varc \gamma^{-\delta} {\vart} \right) \; = \; 1 - {\vart}^{-q}, \qquad P^{\eta} \left(Y_k = -\gamma^{-\delta}{\vart}\right) \; = \;  {\vart}^{-q}
& \text{if} \quad {\vart}' \le \gamma^{-1}, \label{small gamma Y intermediate t}\\
& P^{\eta} \left(Y_k  = \varc \gamma^{-\delta} {\vart} \right) \; = \; 1 - {\vart}^{-q}, \qquad P^{\eta} \left(Y_k = -\gamma^{-1}{\vart} \right) \; = \;  {\vart}^{-q}
&  \text{if} \quad {\vart}' > \gamma^{-1}. \label{small gamma Y large t}
\end{align}
For any integer $m \ge 0$, let us also define the events 
\begin{align*}
\mathcal D_m \; &= \; 
\left\{
X_{\gamma^{-1}(r+m{\vart})} \ge - \gamma^{-\delta}{\vart}'
\right\}, \\
\mathcal E_m \; &= \; 
\left\{ 
\forall x \in \mathrm B\left(0,(\gamma^{-\delta}{\vart}')^2 \right), \, x \in G\Big(\eta\left(\gamma^{-1}(r+m{\vart})\right),K \ln (\gamma^{-\delta}{\vart})\Big) 
\right\}.
\end{align*}
Using our inductive hypothesis, we aim to show that, for ${\vart}' \le \gamma^{-1}$, 
\begin{multline}\label{three part equation in renormalization small gamma intermediate t}
P^{\eta} \left( X_{\gamma^{-1}{\vart}'} - X_{\gamma^{-1}r} \le \varc' \gamma^{-\delta} {\vart}' + \gamma^{-\delta}r \right) 
\; \le \; 
P^{\eta} \left( \sum_{k=1}^{{\vart}-1} Y_k \le \varc' \gamma^{-\delta} {\vart}' + \gamma^{-\delta}r \right) \\
\, + \,
\sum_{1 \le m \le {\vart}-1} P^\eta \left( X_{\gamma^{-1}(r+m{\vart})} - X_{\gamma^{-1}(r+(m-1) {\vart})} \le - \gamma^{-\delta}{\vart} \big| \mathcal D_{m-1},\mathcal E_{m-1} \right)\\
\, + \,
\sum_{1\le m \le {\vart}-1} \big(  P^\eta  (\mathcal D^c_{m-1}) + P^{\eta} (\mathcal E^c_{m-1}) \big)
\end{multline}
with $(Y_k)_{k\ge 1}$ given by \eqref{small gamma Y intermediate t},
and for ${\vart}' > \gamma^{-1}$, 
\begin{equation}\label{three part equation in renormalization small gamma large t}
P^{\eta} \left( X_{\gamma^{-1}{\vart}'} - X_{\gamma^{-1}r} \le \varc' \gamma^{-\delta} {\vart}' + \gamma^{-1}r \right) 
\; \le \; 
P^{\eta} \left( \sum_{k=1}^{{\vart}-1} Y_k \le \varc' \gamma^{-\delta} {\vart}' + \gamma^{-1}r \right)  \, + \,  \sum_{1\le m \le \vart-1}  P^{\eta} (\mathcal E^c_{m-1})
\end{equation}
with $(Y_k)_{k\ge 1}$ given by \eqref{small gamma Y large t}.
One sees that the first of these bounds, valid for ${\vart}' \le \gamma^{-1}$, involves two extra terms in comparison with the second one, valid for  ${\vart}' >\gamma^{-1}$. 
This comes form the fact that, in the first case, it is not always so that, after a time $\gamma^{-1}(r+(m-1) {\vart})$ (for some $1 \le m \le \vart - 1$), the walker is on a site where we have a control on the environment, while it is always so in the  second case (the initial environment is controlled in a box of size $(\gamma^{-\delta}\vart')^2$).

The bounds \eqref{three part equation in renormalization small gamma intermediate t} and \eqref{three part equation in renormalization small gamma large t} are shown in an analogous way; 
as the latter is easier, we focus on the derivation of \eqref{three part equation in renormalization small gamma intermediate t}.
Let us thus assume ${\vart}' \le \gamma^{-1}$.
Let us establish that, for any $m\ge 1$ and any $a\in \R$, we have 
\begin{multline}\label{another way to: small gamma renormalization inequality for iid variables}
P^{\eta}  (X_{\gamma^{-1}(r + m{\vart})} - X_{\gamma^{-1}r} \le a ) 
\; \le \; 
P^{\eta} (X_{\gamma^{-1}(r+(m-1){\vart})} + Y_m - X_{\gamma^{-1}r} \le a)\\
+ \, P^{\eta} \left( X_{\gamma^{-1}(r + m{\vart})} - X_{\gamma^{-1}(r + (m-1){\vart})} \le - \gamma^{-\delta}{\vart} \big| \mathcal D_{m-1},\mathcal E_{m-1} \right)
\, + \,  P^{\eta} (\mathcal D_{m-1}^c) \,+ \, P^{\eta} (\mathcal E_{m-1}^c) .
\end{multline}
Since ${\vart}' = r + ({\vart}-1){\vart}$, \eqref{three part equation in renormalization small gamma intermediate t} follows from \eqref{another way to: small gamma renormalization inequality for iid variables} by iteration and Fubini theorem.
%Let us first establish \eqref{another way to: small gamma renormalization inequality for iid variables} for $m=1$. 
%We write
%\begin{equation*}
%P^{\eta}  (X_{\gamma^{-1}(r + t)} - X_{\gamma^{-1}r} \le a ) 
%\; \le \;
%P^{\eta} \left(X_{\gamma^{-1}(r+t)} - X_{\gamma^{-1}r} \le a| \mathcal D_0,\mathcal E_0\right) \, + \, P^{\eta} (\mathcal E_{0}^c) \, + \, P^{\eta} (\mathcal D_{0}^c).
%\end{equation*}
%Thanks our inductive hypothesis at scale $n$, we indeed find 
%\begin{equation*}
%P^{\eta} \left(X_{\gamma^{-1}(r+t)} - X_{\gamma^{-1}r} \le a| \mathcal D_0,\mathcal E_0\right)
%\;  \le \; 
%P^{\eta} (Y_1 \le a) \, + \, P^{\eta} \left( X_{\gamma^{-1}(r + t)} - X_{\gamma^{-1}r} \le - \gamma^{-\delta}t \big| \mathcal D_{0},\mathcal E_{0} \right).
%\end{equation*}
%Let us then treat the case 
For $m\geq 1$:
\begin{multline}
\label{small gamma induction antepenultimate term}
P^{\eta}  (X_{\gamma^{-1}(r + m{\vart})} - X_{\gamma^{-1}r} \le a)  \; \le\;   \\
P^{\eta} (X_{\gamma^{-1}(r+m{\vart})} - X_{\gamma^{-1}(r+(m-1){\vart})} + X_{\gamma^{-1}(r+(m-1){\vart})} - X_{\gamma^{-1}r}  \le a | \mathcal D_{m-1},\mathcal 
E_{m-1}) \, + \, P^\eta (\mathcal D_{m-1}^c) + P^\eta (\mathcal E_{m-1}^c). 
\end{multline}
The first term in the right hand side is expressed as 
\begin{multline}\label{small gamma induction penultimate term}
\sum_{z \geq - \gamma^{-\delta}{\vart}'} 
P^\eta \left( 
X_{\gamma^{-1}(r+m{\vart})} - X_{\gamma^{-1}(r+(m-1){\vart})} \le a - z + X_{\gamma^{-1}r} \big| \mathcal D_{m-1}, \mathcal E_{m-1}, X_{\gamma^{-1}(r+(m-1){\vart})}=z 
\right)\\
P^\eta (X_{\gamma^{-1}(r+(m-1){\vart})}=z | \mathcal D_{m-1},\mathcal E_{m-1}).
\end{multline}
Our inductive hypothesis at scale $n$ and Markov's property imply that 
\begin{multline}\label{small gamma induction last term}
P^\eta \left( 
X_{\gamma^{-1}(r+m{\vart})} - X_{\gamma^{-1}(r+(m-1){\vart})} \le a - z + X_{\gamma^{-1}r} \big| \mathcal D_{m-1}, \mathcal E_{m-1}, X_{\gamma^{-1}(r+(m-1){\vart})}=z 
\right)
\; \le \; \\
P^\eta (Y_{m} \le a - z + X_{\gamma^{-1}r}) \, + \,  P^{\eta} (X_{\gamma^{-1}(r+m{\vart})} - X_{\gamma^{-1}(r+(m-1){\vart})} \le - \gamma^{-\delta}{\vart} | \mathcal D_{m-1},\mathcal E_{m-1}, X_{\gamma^{-1}(r+(m-1){\vart})}=z ).
\end{multline}
Inserting \eqref{small gamma induction last term} into \eqref{small gamma induction penultimate term}, and then \eqref{small gamma induction penultimate term} into \eqref{small gamma induction antepenultimate term} leads to \eqref{another way to: small gamma renormalization inequality for iid variables}, hence to \eqref{three part equation in renormalization small gamma intermediate t}.

We now proceed to bound each term in the right hand side of \eqref{three part equation in renormalization small gamma intermediate t} and \eqref{three part equation in renormalization small gamma large t} separately.
Let us start with \eqref{three part equation in renormalization small gamma intermediate t}.
To deal with the first term in the right hand side of \eqref{three part equation in renormalization small gamma intermediate t}, we define for any integer $1 \le p \le t-1$, the event
\begin{equation*}
A_p \; = \; \{\exists \ 1 \le   k_1< \dots < k_p \le t-1:Y_{k_j} < 0 \; \text{for all}\; 1 \le j \le p \}. 
\end{equation*}
We have
\begin{equation*}
P^\eta (A_p) \; \le \; \Big( \frac{{\vart}}{{\vart}^2} \Big)^p = \frac{1}{{\vart}^p}. 
\end{equation*}
We take $p=3(q+1)$ and we show that 
\begin{equation}\label{induction small gamma the first bound to be collected}
P^{\eta} \left( \sum_{k=1}^{{\vart}-1} Y_k \le \varc' \gamma^{-\delta} {\vart}' + \gamma^{-\delta}r \right)
\; \le \; P^{\eta} \left( \sum_{k=1}^{t-1} Y_k \le \varc' \gamma^{-\delta} {\vart}' + \gamma^{-\delta}r , A_p^c \right) \, + \, P^\eta (A_p) \; \le \; \frac{1}{{\vart}^p} \; \le \; \frac{1}{{(\vart}')^{(q+1)}} , 
\end{equation}
provided that $\gamma$ was taken small enough. 
Indeed, the first term in the right hand side of \eqref{induction small gamma the first bound to be collected} vanishes since, on $A_p^c$, it holds that 
\begin{equation*}
\sum_{k=1}^{{\vart}-1} Y_k \; \ge \; 
\varc \gamma^{-\delta} {\vart} ({\vart} - 1-p) - p \gamma^{-\delta} {\vart}  \; > \; \varc' \gamma^{-\delta} {\vart}' + \gamma^{-\delta}r,
\end{equation*}
as indeed the last inequality reads $(\varc - \varc') \vart > C(p)$ (which holds true since $\varc - \varc' = 2^{-(n+2)}$ while $\vart \ge \ed^{c 2^n}$ for some $c> 0$).
To deal with the second term in the right hand side of \eqref{three part equation in renormalization small gamma intermediate t}, we apply Lemma \ref{Lemma: crude lower bound}:
\begin{equation}\label{induction small gamma the second bound to be collected}
\sum_{1 \le m \le {\vart}-1} P^\eta \left( X_{\gamma^{-1}(r+m{\vart})} - X_{\gamma^{-1}(r+(m-1) {\vart})} \le - \gamma^{-\delta}{\vart} \big| \mathcal D_{m-1},\mathcal E_{m-1} \right)
\; \le \;
({\vart}-1) \ed^{-{\vart}^{\delta/2}} \; \le \; \frac{1}{({\vart}')^{q+1}}. 
\end{equation}
Similarly,
\begin{equation}\label{induction small gamma the third bound to be collected}
\sum_{1\le m \le {\vart}-1}   P^\eta  (\mathcal D^c_{m-1})  \; \le \; ({\vart}-1) \ed^{-{\vart}^{\delta/2}} \; \le \; \frac{1}{({\vart}')^{q+1}}. 
\end{equation}
Finally, to bound for any $m\geq 0$, $P^{\eta}(\mathcal E^c_m)$, we apply Proposition \ref{Proposition: good set at a later time}. 
The hypothesis reads here $\gamma (\gamma^{-1}(r+m{\vart})) \ge (K \ln (\gamma^{-\delta}{\vart}))^3$, and is satisfied since, for $\gamma$ smal enough, $r+m{\vart} \ge {\vart} \ge  \ln^4 \gamma^{-1}$.
Therefore, taking $K$ large enough, 
\begin{multline}\label{induction small gamma the fourth bound to be collected}
\sum_{1\le m \le {\vart}-1}   P^\eta  (\mathcal E^c_{m-1}) 
\; \le \; 
\sum_{\substack{x\in\mathrm B(0,(\gamma^{-\delta}{\vart}')^2), \\ 1\le m \le {\vart}-1 }} P^{\eta} \left( x\notin G(\eta(\gamma^{-1}(r+m {\vart})), K \ln \gamma^{-\delta}{\vart}) \right) \\
\; \le \;
({\vart}-1)(\gamma^{-\delta} {\vart}')^2 \ed^{-cK \ln \gamma^{-\delta}{\vart}} \; \le \; \frac{1}{({\vart}')^{q+1}}.
\end{multline}
For \eqref{three part equation in renormalization small gamma large t}, both the equivalent of \eqref{induction small gamma the first bound to be collected} and of \eqref{induction small gamma the fourth bound to be collected} remain valid. Inserting (\ref{induction small gamma the first bound to be collected}-\ref{induction small gamma the fourth bound to be collected}) into \eqref{three part equation in renormalization small gamma intermediate t} and then \eqref{three part equation in renormalization small gamma intermediate t} into \eqref{a first estimate in the inductive step for small gamma small t} for ${\vart}' \le \gamma^{-1}$, or inserting the equivalent of \eqref{induction small gamma the first bound to be collected} and \eqref{induction small gamma the fourth bound to be collected} into \eqref{three part equation in renormalization small gamma large t} and then \eqref{three part equation in renormalization small gamma large t} into \eqref{a first estimate in the inductive step for small gamma large t} for ${\vart}' > \gamma^{-1}$, yields the result. 
\end{proof}

% SUBSECTION
\subsection{Conclusion of the proof}

\begin{proof}[Proof of Theorem \ref{Theorem: drift for small gamma}]
If $K$ is large enough, we have, writing simply $\epsilon$ for $\epsilon_{K \ln \gamma^{-\delta}t}$,
\begin{align}\label{admissible points typical small gamma}
&\P_\rho \left( 
\exists x \in \mathrm B\left(0, (\gamma^{-\delta} t)^2\right) \, : \, x \notin G\left(\eta, K \ln (\gamma^{-\delta}t)\right) 
\right) 
\; \le \; 
\sum_{x\in \mathrm B\left(0, (\gamma^{-\delta} t)^2\right)} \P_\rho \left( x \notin G\left(\eta, K \ln (\gamma^{-\delta}t)\right)  \right) \nonumber\\
\; &\le \; 
\sum_{x\in \mathrm B\left(0, (\gamma^{-\delta} t)^2\right)} \sum_{L \ge K \ln (\gamma^{-\delta}t)}
\P_\rho \left( \langle \eta \rangle_{x,L} > (1 + \epsilon) \rho \right) 
\; \le \; 
\sum_{x\in \mathrm B\left(0, (\gamma^{-\delta} t)^2\right)} \sum_{L \ge K \ln (\gamma^{-\delta}t)} \ed^{-c L \epsilon^{2}} \nonumber\\
\; & \le \; 
\Const (\gamma^{-\delta}t)^2  \epsilon^{-2} \ed^{-c K \ln (\gamma^{-\delta}t) \epsilon^{2} }
\; \le \; \Const' (\gamma^{-\delta}t)^2 (\gamma^{-\delta}t)^{-c' K} \; \le \; t^{3-c'K}
\end{align}
once $t$ is large enough. This last term can be bounded by $1/t^2$ that is summable for $K$ large enough.
Therefore, since 
\begin{multline*}
P_0 (X_t < \gamma^{-\delta}t /2) \; \le \; 
P_0  \left( \exists x \in \mathrm B (0, (\gamma^{-\delta} t)^2) \, : \, x \notin G (\eta, K \ln (\gamma^{-\delta}t) \right) \\
+
P_0 \left(X_t < \gamma^{-\delta}t /2 \; \big| \;  \forall x \in \mathrm B (0, (\gamma^{-\delta} t)^2 ) \, : \, x \in G (\eta, K \ln (\gamma^{-\delta}t) \right),
\end{multline*}
it follows from the Borel-Cantelli lemma, from Proposition \ref{Proposition: small gamma} and from \eqref{admissible points typical small gamma} that, 
for $K$ large enough, $\delta> 0$ small enough and $\gamma >0$ small enough, 
\begin{equation*}
\liminf_{t\rightarrow \infty} X_t \; \ge \; \frac{\gamma^{-\delta} t}{2} \qquad P_0 - a.s.
\end{equation*}
\end{proof}

% SECTION
%%%%%%%%%%%%%%%%%%%%%%%%%%%%%%%%%%%%%%%%%%
\section{Proof of Theorem \ref{Theorem: Drift to the right} and Theorem \ref{Theorem: Drift to the left}}\label{Section: Proof of law of large numbers}
Theorem \ref{Theorem: Drift to the right} is deduced from Proposition \ref{Proposition: Renormalization large gamma} and Theorem \ref{Theorem: lim inf to the right} exactly in the same way as Theorem \ref{Theorem: Drift to the left} is deduced from Proposition \ref{Proposition: small gamma} and Theorem \ref{Theorem: drift for small gamma}. 
We only show Theorem \ref{Theorem: Drift to the right}, and so we fix $\gamma$ large enough so that the conclusions of Proposition \ref{Proposition: Renormalization large gamma} and Theorem \ref{Theorem: lim inf to the right} are in force.

Let us first introduce a few definitions and notations. 
In Section \ref{subsec:model}, the law $\P$ of the environment was built from the exclusion process. 
In this section, it is more convenient to build it from the interchange process on $\Z$ together with an independent collection of particles of two different types, so that each site is occupied by exactly one particle. 
%More precisely, we put on each site $x\in\Z$ a particle with label $x$ and consider the interchange process with parameter $\gamma$. 
We use the following definitions: 
\begin{enumerate}
\item
Let us consider a collection of spatially independent Poisson clocks $(U(t,x))_{t\in\R,x\in\Z}$ with parameter $\gamma$, called updates. 
We construct a process $(\xi (t,i))_{t\in \R,i\in\Z}$, where, for $i\in\Z$ and $t\in {\R}$, $\xi(t,i)$ denotes the position at time $t$ of particle $i$:
$\xi (0,i)=i$ and, if the clock $x$ rings at time $t$, then the particle at $x-1$ and the particle at $x$ exchange their positions.
Remark that the time is indexed by $\R$: it is convenient for technical reasons, although not necessary to define the model, see Lemma \ref{lemma: particle too much to the right} for example.

%We then build a process $\xi(t,i)$ as follows : at time $0$ we put a particle at each point of $\Z$, and if the clock $x$ rings at time $t$, then $\xi$ exchanges the particle that is at $x-1$ at time $t^-$ with the one at $x$. 

%For $i\in\Z$ and $t\in {\R}$, $\xi(t,i)$ denotes the position at time $t$ of the $i$-th particle, i.e.\@ the particle that is at $i$ at time $0$.
%Let $\P_1$ be the law of this process. 
%The process $\xi$ can be built with a collection of spatially independent Poisson clocks $(U(t,x))_{t\in\R,x\in\Z}$ with parameter $\gamma$, called updates:
%if the clock $x$ rings at time $t$, then $\xi$ exchanges the particle that is at $x-1$ at time $t^-$ with the one at $x$. 

\item
For $t \ge 0$ and $x\in\Z$, $\mu(t,x)$ is the unique $i$ such that $\xi(t,i)=x$ that is the label of the particle that is in $x$ at time $t$. 
Note that $\mu$ is a function of $\xi$. 

\item
We consider also a family of type of particles $(\nu(i))_{i\in \Z}$, which is a product of independent Bernoulli, with parameter $\rho$. 
Let $\P_2$ be its law. 
\end{enumerate}
The environment $\omega$ is viewed as a function of $\xi$ and $\nu$:
\begin{align*}
\omega(\xi,\nu) (t,i)= \begin{cases} \alpha &\mbox{if } \nu(\mu(t,i))=1, \\
		                                       \beta & \mbox{if } \nu(\mu(t,i))=0. \end{cases}
\end{align*}
We let the reader check that the law of the environment $\P$ defined in Section \ref{subsec:model} is the push forward probability of $\P_1 \otimes \P_2$ through this function. 
We denote the space where lives $\xi$ by $\Xi$, and the set of  path that supports the process $X$ by $\mathcal{P}$.

We will use the ellipticity of the environment to control the probability that the walker executes some displacements independently from any information we could have about the environment. 
Define this minimal probability by
\begin{equation}
\label{kappa}
\kappa = \min\{\alpha,1-\alpha,\beta,1-\beta\}.
\end{equation}
By assumption \eqref{ellipticity}, we have $\kappa>0$. 

We now start the proof of Theorem \ref{Theorem: Drift to the right}. 
Our aim is to construct a renewal structure, i.e.\@ to cut the random path followed by the walker into pieces that are independent under $P_0$ 
(see \cite{Avena dos Santos Vollering} for the first use of this method in a similar context, see also \cite{Berard Ramirez, Hilario et al}). 
For any time-space point $(t,x)\in \R\times \Z$, we define 
\begin{align}
T^-_{t,x}&=\{(s,y), s\leq t,y\leq x+\frac{\mathsf v}{4}(s-t)\},\\
T^+_{t,x}&=\{(s,y), t\leq s , y\geq x+\frac{\mathsf v}{4}(s-t)\},
\end{align}
where $\mathsf v$ is the constant appearing in Proposition \ref{Proposition: Renormalization large gamma} (see Figure \ref{figure: Renewal time}).
The position of the rightmost visited particle at time $n\geq 0$ is $M(n):=\sup\{\xi(n,\mu), \ \mu \in \mathcal{V}_n\}$, with $\mathcal{V}_n=\{\mu(i,X_i),\ i\leq n-1\}$ the set of labels of visited particles at time $n$. 
Our goal is to define a sequence of random times, called {renewal times}, that satisfy 
\begin{align}
\label{re1}&\forall n\leq \tau, \quad (n,X_n)\in T^-_{\tau,X_{\tau}},\\
\label{re2}& M(\tau)< X_\tau,\\
\label{re4}&\forall n\geq \tau, \quad (n,X_n)\in T^+_{\tau,X_{\tau}},\\
%\label{re5}& \forall n\geq \tau,\quad\xi(\tau,\mu(n,X_n))\geq X_{\tau}\\
\label{re5}& \forall (n,x) \in T^+_{\tau,X_{\tau}},\quad \xi(\tau,\mu(n,x))\geq X_{\tau}
\end{align}
(see Figure \ref{figure: Renewal time}).
The meaning of \eqref{re1} and \eqref{re4} is clear. 
Condition \eqref{re2} means that, at time $\tau$, all particles visited by the walker before time $\tau$, are behind the walker.
Finally, \eqref{re5} means that all the particles at the left of $X_\tau$ at time $\tau$ will never enter into the cone $T^+_{\tau, X_\tau}$. 
These four conditions together imply in particular that after the time $\tau$, the walker will only visits particles that have not yet been visited at time $\tau$.

\begin{figure}[h!]
\begin{center}
\includegraphics[width=0.8\textwidth]{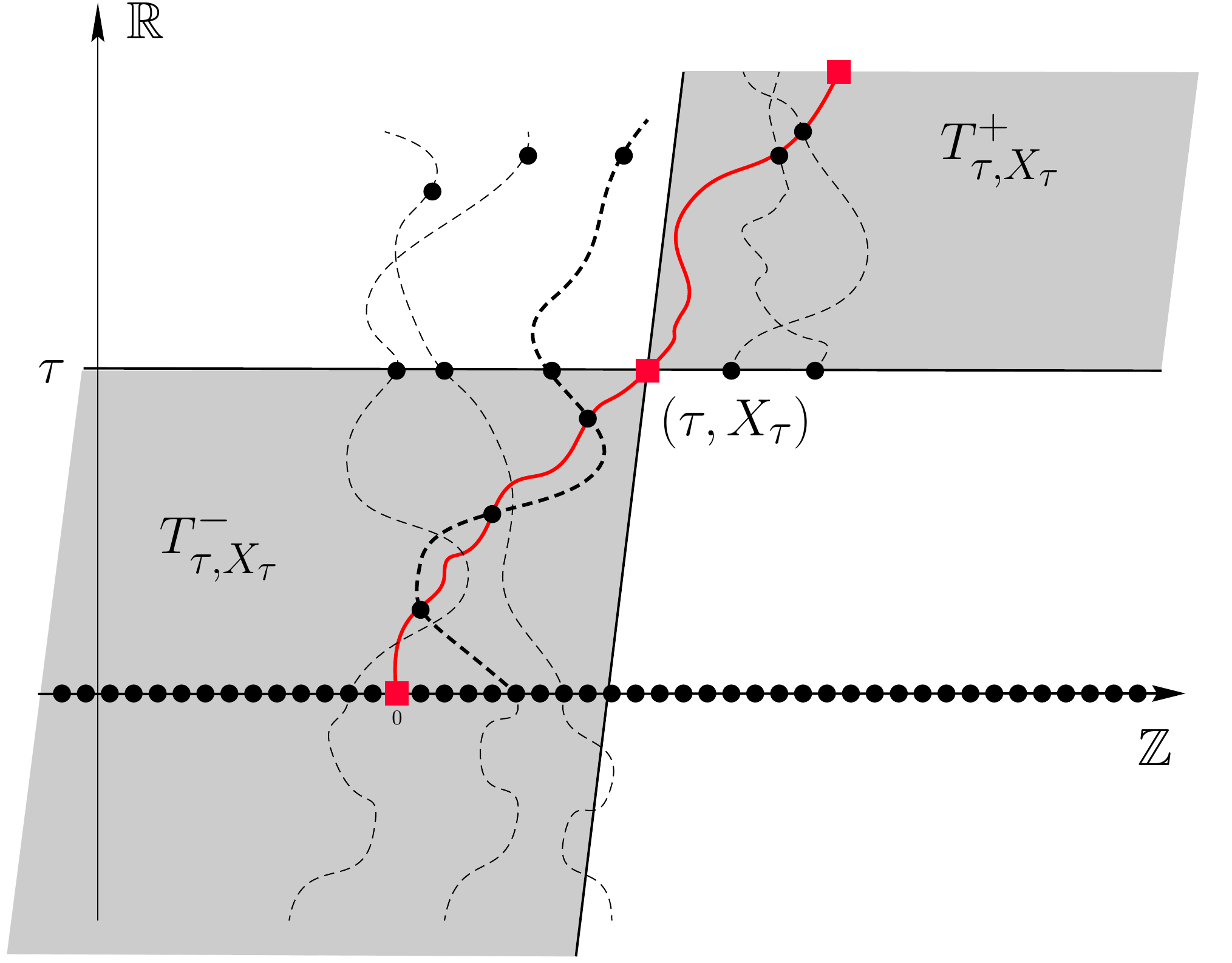}
\end{center}
\caption{
\label{figure: Renewal time}
Renewal time $\tau$. 
The slope of the oblique line is $(\mathsf v/4)^{-1}$. 
Conditions \eqref{re1} and \eqref{re4}: the walker (in red) lives in $T^{-}_{\tau,X_\tau}$ before time $\tau$ and in $T^{+}_{\tau,X_\tau}$ after time $\tau$ . 
Conditions \eqref{re1} and \eqref{re5}: all particles visited by the walker before time $\tau$ sit at its left at time $\tau$, 
and no particle at the left of $X_{\tau}$ at time $\tau$ enters $T^{+}_{\tau,X_\tau}$.
%and all points in $T^{+}_{\tau,X_\tau}$ will be only visited by particles that sit at the right of $X_\tau$, or at $X_\tau$, at time $\tau$.
}
\end{figure}

Our goal is to prove that, $P_0-$a.s., there are infinitely many renewal times, and to show that the first and second moments of $\tau$ are finite, from which Theorem \ref{Theorem: Drift to the right} is readily deduced (see Section \ref{renewal: last subsection}). 
For this, our first step is to show that, almost surly, there is a positive fraction of the points on the trajectory $(n,X_n)_{n\ge 0}$, called candidates, satisfying the conditions (\ref{re1}-\ref{re2}) (see Proposition \ref{l:densite} below). 
Since the walker drifts ballistically in a diffusive environment, this claim is surely very reasonable, but it is however not straightforward:
While it follows directly form the ballistic behavior that a positive fraction of times satisfy \eqref{re1}, it is not obvious to prove that a positive fraction of these times  satisfy \eqref{re2}.
%to show that \eqref{re2} occurs with positive probability, since the law of the environment conditioned on the behavior of the walker is not known explicitly.  
The proof of Proposition \ref{l:densite} constitues the most technical part of the work (see the heuristic considerations below Proposition \ref{l:densite}), 
and we stress that this problem could not be solved by the methods of \cite{Avena dos Santos Vollering, Berard Ramirez, Hilario et al}.
We next turn to  (\ref{re4}-\ref{re5}).
In Section \ref{subsection: beginning two other conditions}, we carry over the main computation that allows us to see that these conditions can be satisfied with positive probability
(the ellipticity of the walk, $\kappa > 0$, see \eqref{kappa}, is used here).
We are then able to construct an infinite sequence of renewal points (see Sections \ref{sub:Building the first renewal point}-\ref{sub:Building renewal points}) and,
along the way, we show the properties that make the interest of the renewal times (they cut the trajectory into independent pieces). 
Finally, the moments of $\tau$ are bounded by combining the bounds of Proposition \ref{Proposition: Renormalization large gamma} and Proposition \ref{l:densite} (see Proposition \ref{moment tau} in Section \ref{renewal: last subsection}).

%As mentioned in the introduction, a similar construction was already used several times to study various models involving dynamical environments, and again recently in \cite{Berard Ramirez, Hilario et al}.
%However, the construction is very model-dependent, leading us to develop our own version in order to overcome the specific difficulties we had to face.  
%This is in particular the case for Proposition \ref{l:densite}, where we show that there exists $P_0-a.s.$ a positive asymptotic density of times where the walker sits in front of all particles visited until that time. 
%This result constitutes the main technical part of our proof. 

\subsection{Existence of a density of points $(n,X_n)$ satisfying (\ref{re1}-\ref{re2})}
Given an environment, a point $(n,X_n)$ on a trajectory $(m,X_m)_{m\ge 0}$ is said to be a {candidate} if it satisfies \eqref{re1} and \eqref{re2}: 
$$\forall m\leq n, \quad (m,X_m)\in T^-_{n,X_n}, \qquad \text{and} \qquad M(n)< X_n.$$
We prove the existence of a positive fraction of candidate points on $P_0-$almost every trajectory: 
\begin{Proposition}\label{l:densite}
Let $q \ge 2$.
There exists some $c>0$ such that, for $n$ large enough, 
\begin{equation}\label{densite quantitatif}
P_0 (\sharp\{i\leq n, \ (i,X_i) \textrm{ is a candidate}\} \leq cn) \; \le \; \frac{1}{n^q}.
\end{equation}
In particular, by the Borel-Cantelli lemma, 
\begin{equation}\label{densite}
\liminf_{n\to+\8} \frac{\sharp\{i\leq n, \ (i,X_i) \textrm{ is a candidate \} }}{n}>c,\quad P_0-a.s.
\end{equation}
\end{Proposition}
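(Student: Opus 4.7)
The plan is to prove Proposition \ref{l:densite} by (i) establishing a uniform-in-$n$ lower bound on the probability that a fixed time is a candidate, (ii) showing that candidate events at sufficiently distant times are asymptotically independent, and (iii) concluding \eqref{densite quantitatif} via a higher-moment/Chebyshev argument.

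For (i), I would fix a slowly growing scale $L = \ln^{100} n$ and introduce a truncated candidate event $\mathcal{C}_n^L$, depending only on the walker and environment in the window $[n-L, n]$, that entails the full candidate condition up to a small exceptional event. The event requires (a) $X_n - X_m \geq (\mathsf{v}/4)(n-m)$ for all $m \in [n-L, n]$, (b) every particle visited in the last $L$ steps is at position $< X_n$ at time $n$, and (c) no particle visited before time $n-L$ has drifted into $[X_n, +\infty)$ by time $n$. Part (a) follows from Proposition \ref{Proposition: Renormalization large gamma} applied after time $n - L$ with the appropriate choice of scale. Part (c) combines (a) with the observation that, under the interchange construction, each labeled particle performs a simple symmetric random walk at rate $2\gamma$; a union bound over $k = n - m \geq L$ of the Gaussian tails $P(\xi(n,\mu(m,X_m)) - X_m \geq (\mathsf{v}/8)k)$ is summable and negligible for $L$ large.

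The central difficulty is part (b), because for $n - m$ of order $1$ the typical displacement of a visited particle is $\Theta(\sqrt{\gamma}) \gg 1$ and a union bound on individual particles is useless. The resolution exploits the ellipticity \eqref{kappa}: conditional on the environment, the last $\ell \sim \gamma$ walker increments are independent with right-jump probability at least $\kappa$, so with conditional probability at least $\kappa^\ell$ we may force a deterministic run of $\ell$ consecutive up-steps ending at $X_n$. On this forced event the visited labels in the window sit initially at $X_n - 1, \dots, X_n - \ell$, and the requirement that each of them lies $< X_n$ at time $n$ reduces to a barrier-crossing estimate for $\ell$ coupled symmetric random walks, whose probability is uniformly positive in the fixed parameter $\gamma$. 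Combining the three ingredients yields $P_0(\mathcal{C}_n^L) \geq c_0 > 0$ uniformly in $n$.

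For (ii), when $|n-n'| \geq 2L$ the events $\mathcal{C}_n^L$ and $\mathcal{C}_{n'}^L$ depend on essentially disjoint space-time windows; using Proposition \ref{Proposition: good set at a later time} to control the dissipation of the environment between the two windows, one obtains
\begin{equation*}
\bigl|\mathrm{Cov}(\mathbf{1}_{\mathcal{C}_n^L}, \mathbf{1}_{\mathcal{C}_{n'}^L})\bigr| \leq e^{-L^{1/5}}.
\end{equation*}
Setting $\tilde N = \sum_{i=1}^n \mathbf{1}_{\mathcal{C}_i^L}$, a $2q$-th moment estimate obtained by iterating the decorrelation bound combined with Markov's inequality gives $P_0(\tilde N \leq c_0 n/2) \leq n^{-q}$. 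Absorbing the residual event ``some far-past particle invades the window'' (of total probability at most $n\,e^{-L^{1/5}}$) yields \eqref{densite quantitatif}. The main obstacle is part (b) above: this is precisely where the methods of \cite{Avena dos Santos Vollering, Berard Ramirez, Hilario et al} break down, and the resolution crucially relies on the combination of the ellipticity $\kappa > 0$ with the simple random walk representation of labeled particles under the interchange dynamics, neither of which alone is enough.
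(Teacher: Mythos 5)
The central technical obstacle that the paper's proof is designed to overcome is the dependence structure between the events ``$(i,X_i)$ is good'' at different times $i$, and your proposal does not actually resolve it. Your step (ii) asserts that $\mathcal{C}_n^L$ and $\mathcal{C}_{n'}^L$ ``depend on essentially disjoint space-time windows'' when $|n-n'|\ge 2L$, but this is not true even for the truncated events: condition \eqref{re2} requires $M(n)<X_n$, which is a statement about \emph{which} particles were visited by the walker and \emph{where} they are at time $n$, and the set of visited particles in $[n-L,n]$ is determined by $(X_m)_{m\in[n-L,n]}$, which in turn depends on the full environment up to time $n-L$ through $X_{n-L}$. Likewise, condition \eqref{re1} is $(m,X_m)\in T^-_{n,X_n}$ for \emph{all} $m\le n$, which cannot be reduced to a window of size $L$ — it is precisely the global ballistic constraint. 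Invoking Proposition~\ref{Proposition: good set at a later time} to derive a covariance bound $\ed^{-L^{1/5}}$ between these events is not justified; that proposition controls the relaxation of traps in the exclusion process, not the decorrelation of functionals of the walker which are shared between $\mathcal{C}_n^L$ and $\mathcal{C}_{n'}^L$ through the common trajectory history. The paper's approach is quite different: it conditions on a deterministic trajectory, notices the catastrophic $2^n$ entropy of paths, and recovers concentration by coarse-graining into blocks of size $l$ (reducing entropy to $\ed^{cn/l}$) and, crucially, by introducing the random variables $d_{t,x}$ and $E(i,j)$ that quantify ``how badly'' condition \eqref{good 2} fails, so that $\{E(i,j)=m\}$ is measurable with respect to updates in a cone slab $\tilde T^-_{i,j+m}\setminus\tilde T^-_{i,j-1}$, yielding genuine independence along the block trajectory $\mathcal{T}'$. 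A moment method built on unproved decorrelation does not substitute for this.

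There is a second, more local gap in your step (b). You propose to force a run of $\ell\sim\gamma$ consecutive up-steps at cost $\kappa^\ell$, and observe that the particles visited in those last $\ell$ steps start at $X_n-1,\dots,X_n-\ell$. But the truncated event concerns particles visited in the last $L=\ln^{100}n$ steps, and $L\gg\ell$; the forced run says nothing about particles visited in $[n-L,n-\ell]$, whose displacement over a time of order $L$ is $\sqrt{\gamma L}$, unboundedly large. Taking $\ell=L$ instead would give $\kappa^L=\kappa^{\ln^{100}n}$, which decays superpolynomially, destroying the uniform-in-$n$ lower bound $c_0>0$ that your argument requires. The paper's two-step structure (first prove density of $l$-good times for a \emph{fixed} $l$ via the block argument, then upgrade $l$-good to candidate by an $O(1)$-cost ellipticity forcing) is what makes the $\kappa^{l+1}$ cost uniform in $n$; in your version the scale that needs to be controlled grows with $n$.
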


Let us first describe the idea of the proof. 
For this, we start by introducing a slightly weaker notion than that of candidate. 
Let $l \ge 1$ and $n\in\N$. Given an environment, a point $(n,X_n)$ on a trajectory $(m,X_m)_{m\ge 0}$ is said to be $l-${good}, or simply good, if
\begin{align}
&\forall m\le n, \quad (m,X_m)\in T^-_{n,X_n} \label{good 1}\\
&M(n) < X_n + l. \label{good 2}
\end{align}
We will prove Proposition \ref{l:densite} in two steps: we first establish the result with ``candidate" replaced by ``$l-$good" (for some large enough $l$), and then we show that Proposition \ref{l:densite} follows from this intermediate statement 
(this second step is straightforward and we do not comment upon it here). 

Let us take some large integer $n$. 
On the one hand, by Proposition \ref{Proposition: Renormalization large gamma}, $X_n \ge \mathsf v n/2$ with high probability, for some $\mathsf v>0$. 
Therefore, it follows from basic geometric considerations that, with the same probability, a positive fraction of the points on the trajectory $(m,X_m)_{0\le m \le n}$ satisfy \eqref{good 1}. 
On the other hand, given a deterministic path $(j,Y_j)_{j\ge 0}$, with $Y_{j+1} - Y_j = \pm 1$, it holds that any given point $(k,Y_k)$ satisfying \eqref{good 1}, satisfies also \eqref{good 2} with a probability at least $1 - \ed^{-cl}$, 
where $c$ is some constant independent from the path and the point (see  Lemma \ref{lemma: particle too much to the right} below).
One may therefore try to establish that, with very high probability, any given path $(j,Y_j)_{j\ge 0}$ independent of the environment and having a density of points satisfying \eqref{good 1}, has also a density of good points. 
More precisely, since the number of such paths, differing from one another before the time $n$, is roughly bounded by $2^n$, we would need to establish that the probability that a given such path has less than $cn$ good points decays faster to $0$ than $2^{-n}$, provided that $c$ has been chosen small enough.
If the events that different points on the path are good were independent, we would indeed conclude by the above that this probability decays like $\ed^{-I(l)n}$, where $I(l)$ grows to infinity as $l$ grows to infinity, so that the result would follow by taking $l$ large enough. 

The lack of independence forces us to adapt this strategy. 
Let us assume that a point $(k,Y_k)$ satisfies \eqref{good 1} but not \eqref{good 2}.
We define a random variable $d$ to quantify how much \eqref{good 2} failed; roughly, $d$ measures the distance to the right of $Y_k$ travelled by the particles that made \eqref{good 2} to fail (see \eqref{def de d} for a precise definition). 
%and let us denote by $d$ the minimum over the maximal distances to the right of $Y_k + l$ travelled by the particles ensuring that \eqref{good 2} fails (see \eqref{def de d} for a precise definition). 
The main observation is that, when considering a next point $(k',Y_{k'})$  satisfying \eqref{good 1}, with $k' > k$, then, if $(k, Y_k + d) \in T^-_{k',Y_{k'}}$, we can estimate the probability that \eqref{good 2} is satisfied for $(k',Y_{k'})$, independently of our knowledge about $(k,Y_k)$.
This observation is implemented as follows: 
%This observation is made useful thanks to the two following ingredients.  
First, in order to decrease the cardinal of possible paths, we consider paths of blocks of size $l$ instead of paths of points. 
This helps since, while the number of such paths is now bounded by $2^{n/l}$, the probability that a given block is bad (see \eqref{bad part} below) still behaves like $1 - l^2 \ed^{-cl} \sim 1 - \ed^{-c'l}$. 
Then, each time a bad block is seen, we estimate how bad it was, i.e.\@ we estimate $E$, as defined in \eqref{min of maximal distance} below. 
The probability that $E$ exceeds a certain amount $k \ge 0$ decays exponentially with $k$ (see Lemma \ref{lemma: particle too much to the right} bellow).

Before starting the proof of Proposition \ref{l:densite}, let us state an elementary result relative to the updates of the environment alone (recall that the updates $U(t,x)$ are defined for all $t\in \R$).  
Given an environment, we say that a space-time point $(t,x)$ is $l-${bad}, or simply bad, if there exists a particle $i\in\Z$ and a time $s \leq t$ such that 
\begin{equation}\label{bad part}
 (s,\xi(s,i))\in T^-_{t,x}\quad \textrm{ and }\quad \xi(t,i)\geq l+x. 
\end{equation}
Remark that, contrary to the notion of good point (see (\ref{good 1}-\ref{good 2})), the notion of bad point does not involve the trajectory of the walker.
Given $(t,x)$ a bad point, $A_{t,x}$ denotes the set of particles satisfying \eqref{bad part}. 
Given a particle $i\in A_{t,x}$ we define 
\begin{align*}
&s(i)=\max\{s\leq t:\ (s,\xi(s,i))\in T^-_{t,x}\},\\
&\lambda(i)=\min\{k:\ \textrm{for all } s(i)\leq s \leq t,\ (s,\xi(s,i))\in T^-_{t,x+k}\}. 
\end{align*}
Finally we define the variable $d_{t,x}$ by
\begin{equation}
\label{def de d}
d_{t,x}= \inf\{\lambda(i), i\in A_{t,x}\}
\end{equation}
if $(t,x)$ is bad, and $d_{t,x}=0$ if $(t,x)$ is not bad. 
We stress that the variables $d_{t,x}$ are deterministically equal at $0$ or larger than $l$, measurable with respect to $\xi$, and identically distributed (but off course not independent).
\begin{Lemma}\label{lemma: particle too much to the right}
There exists $c > 0$ so that, for any $k\geq l$,
\begin{equation}\label{eq:cone}
\P_1(d_{0,0}\geq k)\leq \ed^{-c k}.
\end{equation}
\end{Lemma}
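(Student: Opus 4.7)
The plan is to reduce $\{d_{0,0}\ge k\}$ to a large-deviation event for the trajectory of a single particle in the interchange process, and then exploit the classical fact that, viewed marginally, each label moves as a rate-$2\gamma$ continuous-time simple symmetric random walk.

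First, I will perform a union bound over labels. If $d_{0,0}\ge k\ge l\ge 1$, then $(0,0)$ is $l$-bad and every $i\in A_{0,0}$ satisfies $\lambda(i)\ge k$; since $\xi(0,i)=i$, any such $i$ also satisfies $i\ge l$. Hence
\[
\P_1\bigl(d_{0,0}\ge k\bigr) \;\le\; \sum_{i\ge l}\P_1\bigl(i\in A_{0,0},\ \lambda(i)\ge k\bigr).
\]
Unpacking the definitions, the event $\{i\in A_{0,0},\ \lambda(i)\ge k\}$ implies the existence of times $s_1\le s_2\le 0$ with $\xi(s_1,i)\le(\mathsf v/4)s_1$ and $\xi(s_2,i)>k-1-(\mathsf v/4)|s_2|$, while $\xi(0,i)=i$. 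In particular, the trajectory of particle $i$ must traverse a rightward displacement of at least $k-1+(\mathsf v/4)(|s_1|-|s_2|)$ in the time interval $|s_1|-|s_2|$ (going from $\xi(s_1,i)$ up to $\xi(s_2,i)$), and it must also sit at the atypical position $i\ge l$ at time $0$.

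The second ingredient is that, by the graphical construction and the symmetry of the bilateral family $(U(t,x))_{t\in\R,x\in\Z}$, the process $t\mapsto \xi(t,i)-i$ for $t\in\R$ has, as a marginal, the distribution of a two-sided continuous-time simple symmetric random walk $W$ with total jump rate $2\gamma$ started at $0$: indeed, the label at any position $y$ jumps to $y\pm 1$ at rate $\gamma$ irrespective of the surrounding configuration, and running the construction backwards in time amounts to reversing the Poisson clocks, which preserves their law. A standard Bernstein-type tail bound for such a $W$ gives
\[
\P_1\!\left(\sup_{0\le s\le T}|W_s|\ge M\right) \;\le\; C\exp\!\left(-c\,\min\!\bigl(M^2/(\gamma T),\,M\bigr)\right).
\]
Applying this to the displacement between $s_1$ and $s_2$ (with $M=k-1+(\mathsf v/4)(|s_1|-|s_2|)$, $T=|s_1|-|s_2|$), and a second time to constrain $W_{|s_1|}\le-i-(\mathsf v/4)|s_1|$, yields after a dyadic decomposition of $|s_1|,|s_2|$ a bound $\P_1(i\in A_{0,0},\ \lambda(i)\ge k)\le C e^{-c(k\vee i)}$ with constants depending only on $\gamma$ and $\mathsf v$. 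Summing the ensuing geometric series in $i\ge l$ and using $k\ge l$ gives $\P_1(d_{0,0}\ge k)\le C'e^{-c'k}$.

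The main technical obstacle I anticipate is the bookkeeping in the double summation over the unbounded continuous parameters $s_1,s_2$ and over the labels $i$. When $|s_1|-|s_2|$ is large compared with $k/\gamma$ one exits the purely $M$-dominated regime of the concentration inequality and the exponent becomes Gaussian of order $(\mathsf v/4)^2(|s_1|-|s_2|)/\gamma$; one must then check that the combinatorial cost of the dyadic summation is absorbed without losing the $e^{-ck}$ decay. The price for this is precisely why the Bernstein interpolation $\min(M^2/(\gamma T),M)$, rather than its Gaussian or Poisson specialisation alone, is required.
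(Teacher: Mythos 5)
Your overall strategy --- a union bound over labels $i\geq l$, then a tail estimate for the trajectory of a single label, using that $(\xi(\cdot,i))$ is a rate-$2\gamma$ two-sided simple symmetric random walk under the graphical construction --- is the same as the paper's. The execution differs: where you propose a Bernstein bound with a dyadic decomposition over the continuous times $s_1,s_2$, the paper avoids all time bookkeeping by splitting the sum over labels into $i\geq k$ (where it drops the exit constraint and uses only $\P_1(\exists\,t\leq 0:\xi(t,i)\in T^-_{0,0})\leq e^{-c_1 i}$, then sums the geometric series) and $l\leq i\leq k-1$ (where it applies the strong Markov property at the exit time of $T^-_{0,k-1}$ together with stationarity of the interchange process, reducing each term to $\P_1(\exists\,t\leq 0:\xi(t,k-1)\in T^-_{0,0})\leq e^{-c_1(k-1)}$). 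This yields $ke^{-c_1 k}+e^{-c_2 k}$ for $k$ large at once. Your route should also work, but is considerably heavier --- and note that your two displacement constraints are on nested time intervals $[s_1,s_2]\subset[s_1,0]$, so they are not independent and cannot simply be multiplied; the dyadic argument would have to pick, at each scale, which of the two constraints to invoke, which in effect reproduces the paper's split by $i$.

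There is also a genuine gap you do not flag. Your estimate delivers $\P_1(d_{0,0}\geq k)\leq C'e^{-c'k}$ with a multiplicative constant $C'$, whereas the lemma claims $\P_1(d_{0,0}\geq k)\leq e^{-ck}$ with no prefactor, for every $k\geq l$. Absorbing $C'$ by shrinking $c$ handles only $k$ beyond a threshold; for the finite range $l\leq k<\text{threshold}$ one still needs $\P_1(d_{0,0}\geq l)<1$, equivalently $\P_1(d_{0,0}=0)>0$. This is a separate, necessary step, to which the paper devotes its entire closing paragraph: it exhibits a positive-probability event forcing $d_{0,0}=0$, by blocking the Poisson clock on the bond $\{l-1,l\}$ over a long backward interval $[s,0]$ (so no label sitting to the right of $l-1$ at time $s$ ever enters $\mathrm B(0,l-1)$ afterwards), and combining this with the single-particle large-deviation estimate to rule out contributions from labels far to the right. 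Your sketch should include this step.
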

\begin{proof}
We first establish the exponential decay for large $k$.
For $k\geq l$, using a union bound,
\begin{align}
\label{debut d}
\P_1(d_{0,0}\geq k)&\; \le \; \sum_{i\geq l}\P_1( \exists s\leq 0 \textrm{ s.t.\@ } \xi(s,i)\notin T^-_{0,k-1},\ \exists t\leq s \textrm{ s.t.\@ } \xi(t,i)\in T^-_{0,0}).
\end{align}
For each $i\geq l$, $(\xi(s,i))_{s\leq 0}$ is a simple continuous time random walk so that there exists some constant $c_1>0$ so that $\P_1(\exists\ t\leq 0 \textrm{ s.t.\@ } \xi(t,i)\in T^-_{0,0})\leq \ed^{-c_1 i}$. 
The sum \eqref{debut d} is decomposed in two parts. 
First, there exists $c_2 > 0$ such that, for $k$ large enough,
\begin{equation*}
\sum_{i\geq k}\P_1( \exists s\leq 0 \textrm{ s.t.\@ } \xi(s,i)\notin T^-_{0,k-1},\ \exists t \leq s \textrm{ s.t.\@ } \xi(t,i)\in T^-_{0,0})
\; \le \; 
\sum_{i\geq k}\P_1(\exists t\leq 0 \textrm{ s.t.\@ } \xi(t,i)\in T^-_{0,0})
\; \le \; 
\ed^{-c_2 k}.
\end{equation*}
Second, let us consider an index $l\leq i\leq k-1$. 
Using Markov's property at the hitting time of the complementary of $T^-_{0,k-1}$ together with the fact the interchange process is considered under its invariant measure, we obtain
\begin{equation*}
\P_1( \exists s\leq 0 \textrm{ s.t.\@ } \xi(s,i)\notin T^-_{0,k-1},\ \exists t\leq s \textrm{ s.t.\@ } \xi(t,i)\in T^-_{0,0}) \; \le \; \P_1( \exists t\leq 0 \textrm{ s.t.\@ } \xi(t,k-1)\in T^-_{0,0}),
\end{equation*}
so that 
\begin{align*}
\sum_{l\leq i\leq k-1}\P_1( \exists s\leq 0 \textrm{ s.t.\@ } \xi(s,i)\notin T^-_{0,k-1},\ \exists t\leq s \textrm{ s.t.\@ } \xi(t,i)\in T^-_{0,0})\leq k \ed^{-c_1 k}.
\end{align*}
We thus obtain \eqref{eq:cone} for $k$ large enough.

As $\P_1(d_{0,0}\geq k)$ is non-increasing with $k\geq l$, it remains to prove that $\P_1(d_{0,0}\geq l)<1$ to complete the proof, or equivalently that $\P_1(d_{0,0}=0)>0$. 
We already know that for some $K$ large enough $\sum_{i\geq K}\P_1(\exists t\leq 0\ \text{s.t.}\ \xi(t,i)\in T^-_{0,0})<1$. We consider a time $s<0$ so that $-\frac{\mathsf v}{4}s > K$. 
Using Markov's property at time $s$ together with the fact the interchange process is considered under its invariant measure, we obtain
\begin{align*}
\P_1(d_{0,0}=0) &\;\ge \; \P_1\left(U(s,l)=U(0,l),\ \{\exists  i\geq l\ \exists t\leq s  \textrm{ s.t.\@ } \xi(s,i)\geq l \textrm{ and }\xi(t,i)\in T^-_{0,0}\}^c\right)\\
 &\; \ge \; \P_1\left(U(s,l)=U(0,l)\right)\  \left(1- \P_1(\exists i\geq K \ \exists t\leq 0 \textrm{ s.t.\@ } \xi(t,i)\in T^-_{0,0})\right) \; > \; 0, 
\end{align*}
where $U(s,l)=U(0,l)$ means that the clock between the sites $l-1$ and $l$ has not rung during the time interval $[0,s]$. 
That concludes the proof.
\end{proof}

\begin{proof}[Proof of Proposition \ref{l:densite}]
The proof is made of two steps. 

In a first step, we fix $l \ge 1$ large enough, and we prove that there exists some constant $c_0>0$ such that, for all $n$ large enough, 
\begin{equation}\label{inter}
P_0 (\sharp\{i\leq n, \ (i,X_i) \textrm{ is $l-$good}\} < c_0 n) \; \le \; \frac{1}{n^{q+1}}, 
\end{equation}
with $l-$good as defined in (\ref{good 1}-\ref{good 2}).
We observe that 
\begin{equation*}
P_0(\sharp\{i\leq n, \ (i,X_i) \textrm{ is good}\}\leq c_0 n)
\; \le \;   
P_0 \left( \sharp\{i\leq n, \ (i,X_i)\textrm{ is good}\}\leq c_0 n,X_n\geq \frac{\mathsf v}{2}n \right) + P_0\left( X_n\leq \frac{\mathsf v}{2}n\right) . 
\end{equation*}
By Proposition \ref{Proposition: Renormalization large gamma}, the second term is bounded by $ P_0(X_n\leq \frac{\mathsf v}{2}n) \le\ed^{-\phi_n^{1/4}} \le 1/n^{q+2}$, where the second inequality is valid for $n$ large enough. 
We thus need a bound on the first term. 

%Let us assume that $X_n \ge \frac{\mathsf v}{2}n$ and, more generally, 
Let us consider a deterministic trajectory $(j,Y_j)_{0 \le j\le n}$ such that  $Y_0=0$, $Y_{j+1} - Y_j = \pm 1$ for all $0 \le j\le n-1$, and $Y_n \ge \frac{\mathsf v}{2}n$. 
We first describe how to create two trajectories of boxes of size $l$, called $\mathcal R$ and $\mathcal T$ below, starting from the trajectory of points $(j,Y_j)_{0 \le j \le n}$. 
Our constructions are illustrated on Figure  \ref{figure: Block construction}. 
For $j \geq 0$, we define 
\begin{equation*}
t_j \; = \; \inf \left\{k\geq 0 \textrm{ s.t.\@ } Y_k\geq j+\frac{\mathsf v}{4}k \right\}, 
\end{equation*}
and we remark that, on the event $\{X_n\geq \frac{\mathsf v}{2}n\}$, $t_i\leq n$ for all $i\leq \frac{\mathsf v}{4}n$. 
We define a space-time parallelogram, or box, by its opposite sides,
\begin{equation*}
\left[(0,0),\left(n,\frac{\mathsf v}{4} n\right)\right] \qquad \text{and} \qquad \left[\left(0,\frac{\mathsf v}{4} n\right),\left(n, \frac{\mathsf v}{2} n\right)\right],
\end{equation*}
and we cut it into boxes of size $l$: writing $ M = \lfloor \frac{n}{l}\rfloor$ and $N=\lfloor \frac{\mathsf vn}{4l} \rfloor$, 
we consider $M\times N$ boxes $C(i,j)$, with $i\in\{1,\dots,M\}$ and $j\in\{1,\dots, N\}$, where $C(i,j)$ is the box defined by the two opposite sides 
\begin{equation*}
\left[\left((i-1)l,(j-1)l+\frac{\mathsf v}{4}(i-1)l\right),\left(il,(j-1)l + \frac{\mathsf v}{4}il\right) \right] 
\quad \text{and} \quad 
\left[\left((i-1)l,jl+\frac{\mathsf v}{4}(i-1)l\right),\left(il,jl + \frac{\mathsf v}{4}il\right) \right].
\end{equation*}
We denote by $\mathcal R \subset \{ 1, \dots , M \} \times \{ 1, \dots , N \}$ the subset of indices $(i,j)$, such that $(i,j) \in \mathcal R$ if and only if the box $C(i,j)$ contains at least one point $(t_k,X_{t_k})$ for some $k \le \frac{\mathsf v}{4} n$.
We observe the following:
\begin{enumerate}
\item
For any $1 \le j \le N$, there exists at least one $i\in \{ 1, \dots , M\}$ such that $C(i,j) \in \mathcal R$.
\item
For any $1 \le i \le M$, there are at most two $j \in \{ 1 , \dots , N \}$ such that $C(i,j) \in \mathcal R$, and in that case the two indices $j$ are consecutive.
%\item There may exist $i \in \{ 1, \dots, T\}$ so that $C(i,j) \notin I$ for all $j \in \{ 1, \dots , K\}$.
\end{enumerate}
It makes thus sense to consider the subset $\mathcal T\subset \mathcal R$ such that $(i,j) \in \mathcal T$ if $j$ is odd and if $i$ is the smallest number so that $(i,j) \in \mathcal R$. 
The cardinal of $\mathcal T$ is $N/2$ (assuming $N$ even, the other case being analogous) and the set $\mathcal T$ can be described as 
\begin{equation*}
\mathcal T = \left\{ (i_1,1), (i_2, 3), \dots , (i_{N/2},N/2) \right\} \quad \text{with} \quad 1 \le i_1 < i_2 < \dots < i_{N/2} \le M. 
\end{equation*}
%In the sequel, we will say that a subset $\tilde J \subset I$ is \emph{admissible} if it can be constructed as above starting from some trajectory $(k,Y_k)_{0\le k \le n}$ that satisfies $Y_0=0$, $Y_{k+1} - Y_k = \pm 1$ for all $0 \le k\le n-1$, and $Y_{n} \ge \frac{\mathsf v}{2} n$. See Figure

\begin{figure}[h!]
\begin{center}
\includegraphics[width=0.8\textwidth]{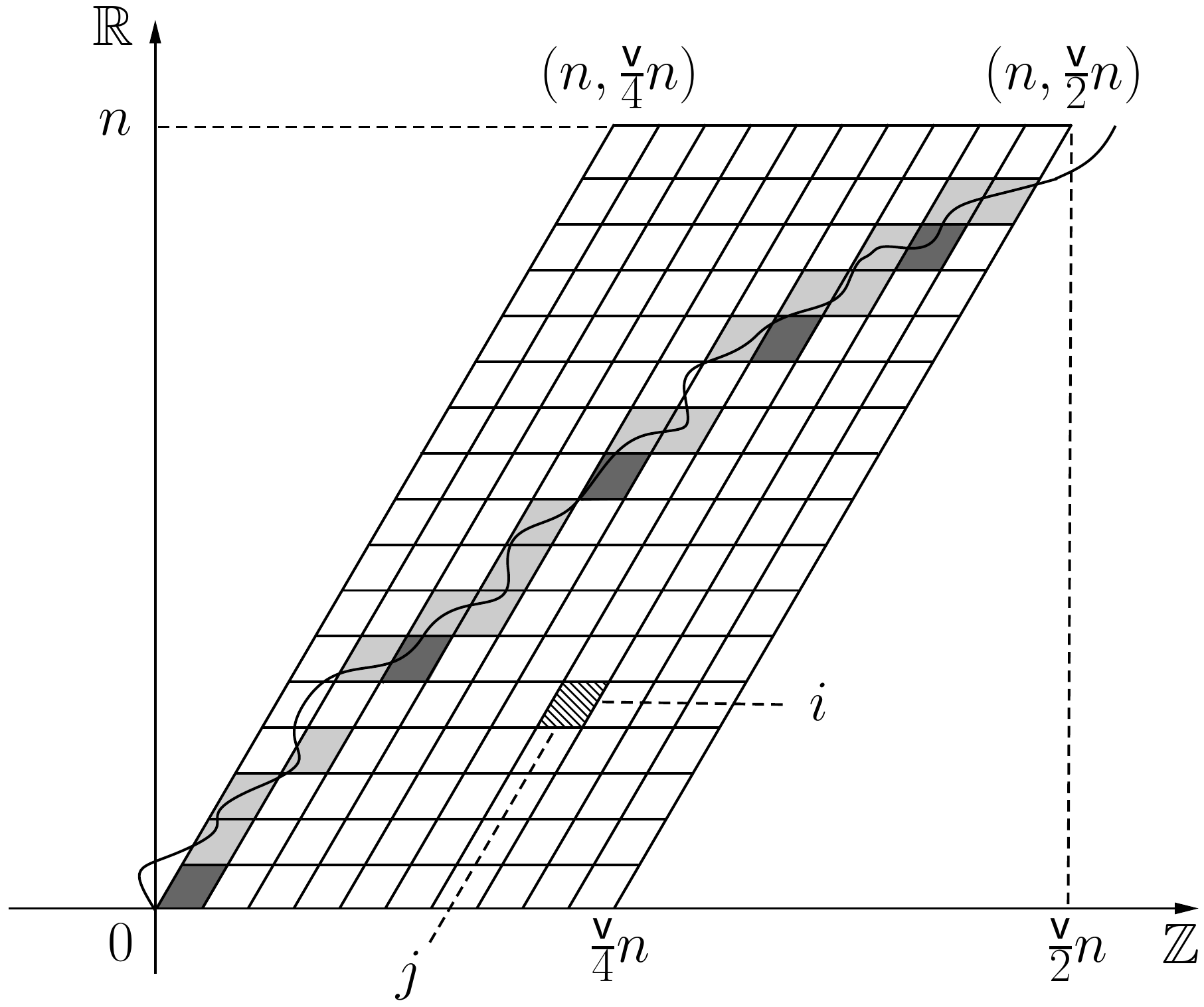}
\end{center}
\caption{
\label{figure: Block construction}
Construction of the trajectories of blocks, $\mathcal R$ and $\mathcal T$, from a deterministic trajectory $(k,Y_k)_{0\le k \le n}$ satisfying $Y_0=0$ and $Y_n \ge \frac{\mathsf v}{2}n$.
The trajectory $\mathcal R$ is marked by the shaded boxes, and the trajectory $\mathcal T \subset \mathcal R$ by the darker boxes.  
A box $C(i,j)$ is represented by the hatched region. }
\end{figure}

Recall now the definition \eqref{bad part} of bad points. 
A block $C(i,j)$ is said to be {bad} if at least one point in $C(i,j)$ is bad. 
For any block $C(i,j)$, we define the variable $E(i,j)$ by $E(i,j)=0$ if the block is not bad, and by
\begin{equation}\label{min of maximal distance}
E(i,j)=\inf\{m\geq 1:\ \exists (t,x)\in C(i,j) \textrm{ bad s.t.\@ } \exists p\in A_{t,x}\textrm{ s.t.\@ } x+\lambda (p) \in \tilde{T}^-_{i,j+m} \}
\end{equation}
if the block is bad, where 
\begin{equation*}
\tilde{T}^-_{i,j}=T^-_{il,jl + \frac{\mathsf v}{4}il}.
\end{equation*}
We note that $E(i,j) \ge 1$ if and only if the block $C(i,j)$ is bad. 
The crucial observation is that, for any $m\ge 1$, the event $\{E(i,j)=m\}$ is measurable with respect to
\begin{equation}\label{independence lemma percolation}
\sigma\left( U (t,x) :\ (t,x)\in \tilde{T}^-_{i,j+m} \setminus \tilde{T}^-_{i,j-1}\right).
\end{equation}
Moreover, using a union bound together with \eqref{eq:cone}, we get that there exists $c_1>0$ such that, for any $m\geq 1$,
\begin{equation}\label{eq:blck}
\P(E(i,j)\geq m)\leq e^{-c_1ml}.
\end{equation}

Let us now assume that $0 < c_0 \le \frac{\mathsf v}{16l}$. We have 
\begin{equation*}
\left\{\sharp\{i\leq n, \ (i,X_i)\text{ is good}\}\leq c_0 n,X_n\geq \frac{\mathsf v}{2}n\right\}\; \subset \; \left\{ \sharp\{ (i,j)\in \mathcal T ((i,X_i)_{1\le i \le n}) \textrm{ s.t. } C(i,j) \textrm{ is bad}\} \geq \frac{N}{4}\right \}, 
\end{equation*}
where the notation $\mathcal T ((i,X_i)_{1\le i \le n})$ means that the trajectory of blocks $\mathcal T$ is built from the trajectory $(i,X_i)_{1 \le i \le n}$. 
There exists $c_2>0$ (independent of $n$) so that the number of $\mathcal T$ trajectories is bounded by $\ed^{c_2\frac{n}{l}}$. Therefore
\begin{multline*}
P_0\left(\left\{\sharp\{i\leq n, \ (i,X_i)\text{ is good}\} \le c_0 n,X_n\geq \frac{\mathsf v}{2}n \right\} \right) \\
\;\le \; 
\ed^{c_2\frac{n}{l}} \max_{\mathcal T} P_0 \left(\sharp\{ (i,j)\in \mathcal T \textrm{ s.t. } C(i,j) \textrm{ is bad}\} \geq \frac{N}{4}\right),
\end{multline*}
where the maximum runs over all block trajectories $\mathcal T$ built from trajectories $(j,Y_j)_{0\le j \le n}$. 
Moreover, there exists $c_3> 0$ (independent of $n$) so that, given  $\mathcal T$, there are at most $\ed^{c_3\frac{n}{l}}$ ways to extract half of the blocks of $\mathcal T$. Thus   
%there are  since for any admissible $J$ there are also at most $\ed^{c_3\frac{n}{l}}$ ways to extract half of the blocks of $J$, 
\begin{equation*}
P_0 \left(\sharp\{ (i,j)\in \mathcal T \textrm{ s.t. } C(i,j) \textrm{ is bad}\} \geq \frac{N}{4}\right)
\; \le \; 
\ed^{c_3\frac{n}{l}} \max_{\mathcal T'\subset \mathcal T,\sharp \mathcal T'=N/4} P_0 \left(\forall (i,j)\in \mathcal T',\ C(i,j) \text{ is bad}\right)
\end{equation*}
(we assume that $N/4$ is an integer for the ease of notations, the other cases are analogous).

To get a bound on the maximum in this last expression, we need some extra notations. 
Let $j_1 < \dots < j_{N/4}$ be the set of points such that $(i,j_k) \in \mathcal T'$ for some unique $i$. 
We denote by $\Phi$ a partition of $\{ j_1 , \dots , j_{N/4}\}$ in non-empty intervals, by which we mean non-empty sets of the type $[a,b] \cap \{ j_1, \dots , j_{N/4}\}$, for $[a,b]$ an interval of $\R$.
%The number of such partitions is bounded by $2^{K/4}$. 
We denote by $|\Phi|$ the number of sets in $\Phi$, 
by $\phi_k$ the sets of $\Phi$, 
by $|\phi_k| \ge 1$ the cardinal of each set,
by $l_k$ the smallest integer in the set $\phi_k$, 
by $r_k$ the largest integer in the set $\phi_k$. 
Moreover, to lighten the notations, let us simply write $C(j_k)$ (resp.\@ $E(j_k)$) for $C(i(j_k),j_k)$ (resp.\@ $E(i(j_k),j_k)$),
where $i(j_k)$ is the number such that $(i (j_k),j_k)\in \mathcal T'$ for a given $j_k \in \{ j_1, \dots, j_{N/4} \}$. 
Then 
\begin{align*}
&\P \left(\forall (i,j)\in \mathcal T',\ C(i,j) \text{ is bad}\right) 
\; = \;
\P \left(C(j_1), \dots , C(j_{N/4}) \text{ are bad} \right) \\
\;& \le \; 
\sum_{\Phi} \P \left( r_1-l_1+1\leq E(l_1) \leq l_2-l_1-1 , r_2-l_2+1\leq E(l_2)  \leq  l_3-l_2-1 , \dots , E(l_{|\Phi|}) \geq r_{|\Phi|}-l_{|\Phi|}+1\right)\\
\;& = \;
\sum_{\Phi} \P \left(  r_1-l_1+1\leq E(l_1) \leq l_2-l_1-1\right) \P\left( r_2-l_2+1\leq E(l_2)  \leq l_3-l_2-1 \right) \cdots \P \left(E( l_{|\Phi|}) )\geq r_{|\Phi|}-l_{|\Phi|}+1 \right)  \\
\;& \leq \;
\sum_{\Phi} \P \left(  r_1-l_1+1\leq E(l_1) \right) \P\left( r_2-l_2+1\leq E(l_2) \right) \cdots \P \left(E( l_{|\Phi|})) \geq r_{|\Phi|}-l_{|\Phi|}+1 \right) \\
\; & \le \;
\sum_{\Phi} \ed^{- c_1 l \sum_{i=1}^{|\Phi|} |\phi_i|}
\; \le \; 2^{N/4} \ed^{-c_1 l N/4}, 
\end{align*}
where the equality in front of the third line follows from \eqref{independence lemma percolation}, where the inequality in front of the fourth line follows from \eqref{eq:blck} together with the rough estimates $|r_k-l_k+1|\geq |\phi_k|$ for any $0\leq k\leq |\Phi|$,  and where the last estimates follows from the fact that the number of partitions $\Phi$ in intervals is bounded by $2^{N/4}$. 
Since $N \ge c_5 n/l$, we obtain finally that, for some constant $c_6 < + \infty$, 
\begin{equation*}
P_0\left(\sharp\{i\leq n, \ (i,X_i)\textrm{ is good}\}\leq c_0 n,X_n\geq \frac{\mathsf v}{2}n \right) \; \le \; \ed^{c_6\frac{n}{l}}e^{-c_5n}. 
\end{equation*}
By taking $l$ large enough, this is bounded by $1/ n^{q+2}$ for $n$ large enough, from where \eqref{inter} follows.

We now turn to the second step of the proof, and derive the result from \eqref{inter}. 
Let $g_1$ be the first good time, i.e. the first time such that $(g_1,X_{g_1})$ is good, and define then, by iteration on $i \ge 1$, $g_{i+1}$ as the first good time after $g_i+l$. 
By \eqref{inter}, the sequence $(g_i)_{i\ge 1}$ is almost surely infinite. 
Remark that $(g_i)_{i\geq 1}$ are stopping times with respect to the filtration
\begin{equation}\label{def filtration}
\mathcal{F}_k=\sigma((X_i)_{i\leq k},(\xi(u,x))_{u\leq k, x\in\Z}),\qquad k\geq 1.
\end{equation}
There exists a constant $\epsilon > 0$ such that, for any $i\geq 1$,
\begin{align*}%\label{indepinter}
P_0(g_{i}+l &\textrm{ is a candidate}|\mathcal{F}_{g_{i-1}+l})\\
&\;\ge\; P_0(U(g_i+l,X_{g_i}+l)=U(g_i,X_{g_i}+l),X_{g_i+1}-X_{g_i}=1,\cdots=X_{g_i+l+1}-X_{g_i+l}=1|\mathcal{F}_{g_{i-1}+l})\\
&\;\ge\; \kappa^{l+1}P_0(U(l,0)=U(0,0)) \; \ge \; \epsilon,
\end{align*}
where we have used the ellipticity to bound the conditional probability that the walker does $l+1$ steps to the right. 
%As the event $\{g_i \textrm{ is a candidate} \}$ is $\mathcal{F}_{g_i+l}$-measurable. we deduce \eqref{densite} from \eqref{inter} by using \eqref{indepinter}.
Let us denote by $(Z_i)_{i\ge 1}$ a sequence of variables with values in $\{0,1 \}$ such that $Z_i = 1$ if $g_i + l$ is candidate, and $Z_i = 0$ otherwise. 
The above implies that $P_0 (Z_k = 1 | Z_1, \dots , Z_{k-1}) \ge \epsilon$. 
Therefore, for $c_7 > 0$ small enough, there exists $c_8 > 0$ so that 
\begin{equation}\label{proof lemma candidates estimate sum blabla}
P_0 \left(\sharp \{ n \le g_i + l : (n,X_n) \text{ is candidate} \} \le c_7 i \right) 
\; \le \; 
P_0 \left(\sum_{k=1}^i Z_k \le c_7 i \right) 
\; \le \; 
\ed^{-c_8 i}.
\end{equation}

We now compute 
\begin{multline*}
P_0 \left(\sharp \{ i \le n : (i,X_i) \text{ candidate} \} \le cn \right) 
\; \le \; \\
P_0 \left(\sharp \{ i \le n : (i,X_i) \text{ candidate} \} \le cn ,  \sharp \{ i \le n : (i,X_i) \text{ good} \} \geq c_0 n \right) +  P_0 \left(\sharp \{ i \le n : (i,X_i) \text{ good} \} < c_0 n \right) .
\end{multline*}
The second term is bounded by $1 / n^{q+1}$ thanks to \eqref{inter}. 
For the first one, we observe that on the event $\{ \sharp \{ i \le n : (i,X_i) \text{ good} \} \geq c_0 n \}$, there exists $c_9 > 0$ such that $g_{c_9 n} \le n$. 
Therefore, if $c>0$ is taken so that $c \le c_7 c_9$, we obtain
\begin{multline*}
P_0 \left(\sharp \{ i \le n : (i,X_i) \text{ candidate} \} \le cn ,  \sharp \{ i \le n : (i,X_i) \text{ good} \} \geq c_0 n \right)
\; \le \;\\
P_0 (\sharp \{ i \le g_{c_9 n} : (i,X_i) \text{ candidate} \} \le cn )
\; \le \; 
\ed^{- c_8 c_9 n }
\end{multline*}
thanks to \eqref{proof lemma candidates estimate sum blabla}.
\end{proof}

% SUBSECTION: 2d step
\subsection{Proof that $(0,X_0)$ satisfies (\ref{re4}-\ref{re5}) with positive probability}\label{subsection: beginning two other conditions}
We prove here that, with positive probability, the walker lives in $T^+_{0,0}$ and does not visit any of the particles that were initially at its left.
We introduce
\begin{align}
D&=\inf\left\{n\geq 0 \textrm{ s.t. } X_n < X_0+\frac{\mathsf v}{4}n\right\},\\
\label{defFF} F&=\inf\left\{n\geq 0,\ \textrm{ s.t. }\exists\ x\geq X_0+ \frac{\mathsf v}{4}n,\ \xi(0,\mu(n,x))<X_0\right\},
\end{align}
with the convention that $\inf \emptyset=+\8$. Remark that $F$ is defined as a discrete time and that $D$, $F$ are stopping times with respect to the filtration $(\mathcal{F}_k)_{k\geq 0}$ defined in \eqref{def filtration}. 
The variable $D$ can be considered as a function of $X$, and $F$ as a function of $\xi$ and $X_0$ only. 
Let $H$ be the infimum of these two stopping times:
\begin{equation*}
H=D\wedge F.
\end{equation*}

We claim that 
\begin{equation}\label{bascule}
P_0(H=+\8)=P_0(D=F=+\8)>0.
\end{equation}

\begin{proof}[Proof of \eqref{bascule}]
In this proof, as $X_0=0$ a.s., we consider $F$ as a function of $\xi$ only, i.e.\@ $F=F(\xi,0)$.
As $\liminf X_n/n>\mathsf v$ $P_0-$a.s., we deduce by monotonicity that
\begin{equation}\label{eq:limL}
\lim_{L\to +\8} P_0 \left(X_n\geq \frac{\mathsf v}{4}(n-L),\ \forall n\geq 0\right) \; = \; 1.
\end{equation}
Using the same type of computation as for \eqref{eq:cone}, we obtain that 
\begin{equation*}
\P_1(F=+\8)>0,
\end{equation*}
and we choose $L$ large enough so that (recall \eqref{eq:limL})
\begin{equation}
\label{moitie}
P_0\left(X_n\geq \frac{\mathsf v}{4}(n-L),\ \forall n\geq 0\right) \; >\; 1-\frac{\P_1(F=+\8)}{2}.
\end{equation}
Finally, for $L'$ large enough so that $L'-v/4L'\geq L$,
\begin{align*}
P_0(H=+\8)&\geq \P \times P^{\omega}_{0,0}(F=+\8,X_1-X_0=1,\cdots=X_{L'}-X_{L'-1}=1,X_n\geq \frac{\mathsf v}{4}n,\ \forall n \geq L')\\
		&\geq \E\left(F=+\8\ ,P^{\omega}_{0,0}(X_1-X_0=1,\cdots=X_{L'}-X_{L'-1}=1)P^{\omega}_{L',L'}(X_n\geq \frac{\mathsf v}{4}(n-L),\ \ \forall n\geq 0)\right)\\
		&\geq \kappa^{L'}\E\left(F=+\8\ ,P^{\omega}_{L',L'}(X_n\geq \frac{\mathsf v}{4}(n-L),\ \ \forall n\geq 0)\right),
\end{align*}
where we have used ellipticity to get the last line, and where $\kappa$ is defined in \eqref{kappa}.
As the law of the environment is invariant by translation,  $\P \times P^{\omega}_{L',L'}(X_n\geq \frac{\mathsf v}{4}(n-L),\ \forall n\geq 0)$ is equal to $P_0(X_n\geq \frac{\mathsf v}{4}(n-L),\ \forall n\geq 0)$, so that finally, using $\eqref{moitie}$,
\begin{equation*}
P_0(H=+\8)\geq \kappa^{L'} \frac{\P_1(F=+\8)}{2} >0.
\end{equation*}
\end{proof}

\subsection{Building the first renewal point}
\label{sub:Building the first renewal point}
%The \textbf{third step} is to build the first renewal point. 
We define, for $n\geq 0$, the shift $\theta_n$ on the space $ \Xi\times \mathcal{P}$ by  $\theta_n (\xi,x)=(\xi',x')$, where
\begin{align*}
\xi'(t,y)&=\xi(t+n,y), \quad (t,y)\in\R^+\times \Z,\\
x'_i&=x_{n+i},\qquad i\geq 0,
\end{align*}
and we consider the increasing sequence of stopping times with respect to the filtration $(\mathcal{F}_k)_{k\geq 1}$  (see \eqref{def filtration}) defined by
\begin{align*}
S_0&=1,\qquad \textrm{ and for } k\geq 1,\\
R_{k}&=\inf\{n\geq S_{k-1} \textrm{ s.t. } (n,X_n) \textrm{ is a candidate} \},\\
S_{k}&=H\circ \theta_{R_k} + R_k.
\end{align*}
Define
\begin{equation}
\label{eq:renouv}
K=\inf\{k\geq 1 \textrm{ s.t. } R_k<+\8 \textrm{ and } S_k=+\8 \}.
\end{equation}
 We claim that 
 \begin{equation}
 \label{Kfini}
 P_0-a.s.,\qquad  K<+\8,
 \end{equation}
so that $\tau:=R_K$ is well defined and moreover $\tau$ is a renewal time in the sense that it satisfies \eqref{re1}-\eqref{re5}. \\
\begin{proof}[Proof of \eqref{Kfini}]
We first deduce from Lemma \ref{l:densite} that
 \begin{equation*}
  P_0-a.s.,\qquad \forall k\geq 0,  \qquad \{S_k<+\8\}\subset \{R_{k+1}<+\8\},
 \end{equation*}
so that
\begin{equation}
\label{recK}
P_0(R_{k+1}<+\8)=P_0(R_{k}<+\8)-P_0(R_{k}<+\8,S_{k}=+\8).
\end{equation}
To compute the last term of \eqref{recK}, we use first that $P^\omega$ is Markovian:
\begin{align}
\label{cle}
P_0(R_{k}&<+\8,S_{k}=+\8)=\sum_{x\in\Z , i\geq 0}P_0(R_{k}=i,X_{R_{k}}=x,S_k=+\8)\\
&=\sum_{x\in\Z , i\geq 0}\E_1\left( \E_2 \left( P^{\omega}_{0,0}(R_{k}=i,X_{R_k}=x) P^{\omega}_{i,x}(H(\theta_i\xi,X)=+\8) \right) \right), \nonumber
\end{align}
where $\E_1$ (resp. $\E_2$) denote the expectation with respect to $\P_1$ (resp. $\P_2$).
For any fixed $\xi$, let $V_{i,x}$ be the set of labels of particles that are strictly at the left of $x$ at time $i$, i.e.
\begin{equation*}
V_{i,x}=\{\mu(i,y),\ y< x\}.
\end{equation*}
Note that given $\xi$, $P^{\omega}_{0,0}(R_{k}=i,X_{R_k}=x)$ is measurable with respect to $\sigma(\nu(\mu), \mu\in V_{i,x})$ while $P^{\omega}_{i,x}(H(\theta_i\xi,X)=+\8)$ is measurable with respect to $\sigma(\nu(\mu), \mu \notin V_{i,x})$. These two variables are thus independent under $\P_2$:
\begin{equation*}
 \E_2\left( P^{\omega}_{0,0}(R_{k}=i,X_{R_k}=x)P^\omega_{i,x}(H=+\8) \right)=\E_2 \left(  P^{\omega}_{0,0}\left(R_{k}=i,X_{R_k}=x\right)\right)\E_2\left( P^{\omega}_{i,x}\left(H(\theta_i\xi,X)=+\8 \right)\right).
\end{equation*}
Finally  $\E_2  \left( P^{\omega}_{0,0}(R_{k}=i,X_{R_k}=x)\right)$ is measurable with respect to $\sigma(U(s,y), s\leq i, y\in\Z)$ and $\E_2\left(P^{\omega}_{i,x}(H(\theta_i\xi,X)=+\8) \right)$ is measurable with respect to $\sigma(U(s,y), s\geq i, y\in\Z)$. These two variables are thus independent under $\P_1$ and finally
\begin{equation*}
P_0(R_{k}<+\8,S_{k}=+\8)=\sum_{x\in\Z , i\geq 0} P_0(R_{k}=i,X_{R_k}=x) P_{i,x}(H(\theta_i\xi,X)=+\8).
\end{equation*}
As the law of environment is invariant by space-time translation, for all $x\in\Z$ and $i\geq 0$, $P_{i,x}(H(\theta_i\xi,X)=+\8)=P_{0}(H=+\8)$ so that
\begin{equation*}
P_0(R_{k}<+\8,S_{k}=+\8)=P_0(R_{k}<+\8)P_0(H=+\8),
\end{equation*}
and going back to \eqref{recK},
\begin{equation}
\label{fin}
P_0(R_{k+1}<+\8) \leq P_0(R_{k}<+\8)(1-P_0(H=+\8)).
\end{equation}
By iteration, we conclude the proof of \eqref{Kfini}.
\end{proof}

% SUBSECTION: 4th step
\subsection{Defining a sequence of renewal points by iteration}\label{sub:Building renewal points}
%In the \textbf{last step} we define a sequence of renewal points by iteration.
Remark that $\tau_1$ is a function of $X$ and $\xi$ so that, in order to iterate the construction, we study the law of these two processes after a time $\tau_1$. 
That is the purpose of the next proposition.
\begin{Proposition}
\label{renew1}
The process and the environment after the first renewal time, $(X_{\tau_1+n}-X_{\tau_1})_{n\geq 0},(\xi(\tau_1+t,x))_{t\geq 0, x\in\Z} $, 
are independent from $(X_{n\wedge \tau_1})_{n\geq 0},(\xi(t\wedge \tau_1,x))_{t\geq 0,x\in\Z}$ and have same law as $(X_n)_{n\geq 0},(\xi(t,x))_{t\geq 0,x\in\Z}$ under $P_0(\cdot | H=+\8)$.
\end{Proposition}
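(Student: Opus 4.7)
The plan is to extend the factorisation argument used in the proof of \eqref{Kfini}. Fix bounded measurable test functionals $A$ of the pre-$\tau_1$ data $((X_{n \wedge \tau_1})_{n\ge 0}, (\xi(t \wedge \tau_1, x))_{t\ge 0, x \in \Z})$ and $B$ of the shifted data, and aim to prove
\[
E_0[A \cdot (B \circ \theta_{\tau_1})] \;=\; E_0[A] \cdot E_0[B \mid H = +\infty].
\]
Specialising $A \equiv 1$ gives the distributional identity, and the general identity then yields independence; so it suffices to establish this one equality.

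First I would decompose the left-hand side by summing over $\{K = k, R_k = i, X_i = x\}$, and split this event into the $\mathcal F_i$-measurable ``pre-$i$'' part $\mathcal P_{k,i,x} := \{R_j, S_j < +\infty \ \forall j < k,\ R_k = i,\ X_i = x\}$ and the post-$i$ part $\{H(\theta_i \xi, X) = +\infty\}$. Then I would carry out the three independence steps already used just below \eqref{cle}: (a) the Markov property of $P^\omega$ at time $i$ factorises the walker's contribution into a pre-$i$ piece ending at $x$ and a $P^\omega_{i,x}$ piece for the post-$i$ event; (b) integrating against $\P_2$, the pre-piece depends only on $\{\nu(\mu) : \mu \in V_{i,x}\}$, because by \eqref{re2} only particles labelled in $V_{i,x}$ have been visited so far, while the post-piece, being restricted to $\{H \circ \theta_i = +\infty\} \subseteq \{F \circ \theta_i = +\infty\}$ and therefore to trajectories obeying \eqref{re5}, depends only on $\{\nu(\mu) : \mu \notin V_{i,x}\}$; independence of the Bernoulli variables under $\P_2$ then splits the $\nu$-integral; (c) integrating against $\P_1$, the two $\P_2$-averaged factors are measurable with respect to $\{U(s,y) : s \le i\}$ and $\{U(s,y) : s \ge i\}$ respectively, so the Markov property of the interchange process combined with the independence of past and future updates separates them.

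Once this factorisation is combined with the space-time translation invariance of the joint law of $(\xi,\nu)$ (which identifies the post-$i$ factor starting at site $x$ at time $i$ with $P_0(\,\cdot\, \cap \{H = +\infty\})$, independently of $(i,x)$), one obtains
\[
E_0[A \cdot (B \circ \theta_{\tau_1})] \;=\; \Bigl( \sum_{k,i,x} E_0[A \cdot \mathbf 1_{\mathcal P_{k,i,x}}] \Bigr) \cdot P_0(H = +\infty) \cdot E_0[B \mid H = +\infty].
\]
Taking $B \equiv 1$ and using $P_0(\tau_1 < +\infty) = 1$ from \eqref{Kfini} identifies the bracketed sum with $E_0[A]/P_0(H = +\infty)$, and inserting this back yields the target identity. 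I expect the main obstacle to be step (c): at time $\tau_1$ the labels of the interchange process have been reshuffled, so matching the shifted system with the original one requires the Markov property of $\xi$ together with the exchangeability of the i.i.d.\@ Bernoulli types $\nu$ under $\P_2$, these two combining to deliver the space-time translation invariance used above. Step (b) is the conceptual heart of the argument, since it is precisely where the renewal conditions \eqref{re2} and \eqref{re5} enter to decouple the pre-$\tau_1$ and post-$\tau_1$ populations of particle labels.
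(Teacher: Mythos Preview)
Your proposal is correct and follows essentially the same approach as the paper: decompose over $\{K=k,\ R_k=i,\ X_i=x\}$, then factorise successively under $P^\omega$ (Markov property of the walk), under $\P_2$ (independence of $\nu$ on $V_{i,x}$ versus its complement), and under $\P_1$ (independence of updates before and after time $i$), invoke translation invariance to identify the post-$i$ factor with $E_0[B,\,H=+\infty]$, and finally set $B\equiv 1$ to identify the remaining sum. The paper's write-up is organised with four test functions $\phi_1,\phi_2,\psi_1,\psi_2$ rather than your two, but this is cosmetic; your observation about the role of the i.i.d.\ structure of $\nu$ in step~(c) is implicit in the paper and worth making explicit.
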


\begin{proof}%[Proof of Proposition \ref{renew1}]
This type of proof is quite usual (see e.g.\@ \cite{SZ} for the case of a static environment). 
We adapt it explicitly to our case in order to be exhaustive.
We define 
\begin{equation*}
\mathcal{G}_1=\sigma \left( (X_{n \wedge \tau_1})_{n\geq 0}, (\xi(s,x))_{s\leq \tau_1,x\in\Z} \right).
\end{equation*}
We have to prove that for any bounded functions $\phi_1,\phi_2$,
\begin{align} 
\label{goal}
E_0(\phi_1((X_{\tau_1+n}-X_{\tau_1})_{n\geq 0})&\phi_2(\xi(\tau_1+t,x)_{x\in\Z,t\geq 0})| \mathcal{G}_1)\nonumber\\
&=E_0(\phi_1((X_{t})_{t\geq 0})\phi_2((\xi(t,x))_{x\in\Z,t\geq 0})| H=+\8).
\end{align}
Consider the variables $\psi_1((X_{n\wedge \tau_1})_{n\geq 0})$ and $\psi_2( (\xi(s,x))_{s\leq \tau_1,x\in\Z})$, where $\psi_1$ and $\psi_2$ are bounded functions. 
If $Z$ is some process and $t$ some time  (possibly random), the process stopped at time $t$ is denoted $Z^t$: for any $s\geq 0$, $Z^t_s=Z_{s\wedge t}$. %{\color{blue}Je trouve aussi cette phrase tres peu claire}. 
Using the same arguments as in the proof \eqref{cle}-\eqref{fin}, we deduce that (we write explicitly the arguments of the functions only when they change from line to line)
\begin{align*}
&E_0\left( \phi_1\phi_2 \psi_1\psi_2   \right)\\
&=\sum_{k,x,n}E_0\left(\phi_1(\theta_n X-X_{n})\phi_2(\theta_n \xi)\psi_1(X^n) \psi_2( \xi^n) , R_k=n, X_{R_k}=x, H\circ \theta_n=+\8 \right)\\
&= \sum_{k,x,n}\E^1\left( \phi_2 \psi_2 \E^2[  E^{\omega}_{0,0}(\psi_1, R_k=n, X_{R_k}=x) E^\omega_{n,x}(\phi_1(X),H(\theta_n\xi,X)=+\8)]\right)\\
&= \sum_{k,x,n}\E^1\left( \psi_2\E^2[ E^{\omega}_{0,0}(\psi_1, R_k=n, X_{R_k}=x) ] \phi_2 \E^2  [ E^\omega_{n,x}(\phi_1(X) ,H(\theta_n\xi,X)=+\8)]\right)\\
&= \sum_{k,x,n}\E^1\left( \psi_2 \E^2[  E^{\omega}_{0,0}(\psi_1, R_k=n, X_{R_k}=x) ] \right) \E^1\left(\phi_2 \E^2  [E^\omega_{n,x}((\phi_1(X),H(\theta_n\xi,X)=+\8)]\right)\\
&= \sum_{k,x,n}\E^1\left( \psi_2 \E^2[  E^{\omega}_{0,0}(\psi_1, R_k=n, X_{R_k}=x) ] \right) E_{n,x}(\phi_1\phi_2(\theta_n\xi),H(\theta_n\xi,X)=+\8) \\
&=E_{0}(\phi_1\phi_2|H=+\8) \sum_{k,x,n}\E^1\left( \psi_2 \E^2[ E^{\omega}_{0,0}(\psi_1, R_k=n, X_{R_k}=x) ] \right) P_0(H=+\8)\\
&=E_0(\phi_1\phi_2|H=+\8) E(\psi_1(X^{\tau_1})\psi_2(\xi^{\tau_1})),
\end{align*}
where the last line is obtained from the previous one by taking $\phi_1=\phi_2=1$ in the same computation. This concludes the proof of \eqref{goal} and thus the proof of Proposition \ref{renew1}.
\end{proof}

As $P_0(H=+\8) >0$, $\tau$ is also defined and finite a.s. under $P_0(\cdot|H=+\8)$. 
We can thus define $\tau_2$, the second renewal time, by $\tau_2=\tau_1+\tau_1((X_{n+\tau_1}-X_{\tau_1})_{n\geq 0},(\xi(n+\tau_1,x))_{t\geq 0,x\in\Z})$
and, by iteration, we define in the same way an increasing sequence of renewal times $(\tau_k)_{k\geq 1}$  that are finite $P_0-$a.s.

The interest of this construction lies in the following
\begin{Proposition}
\label{prop:ccl}
Under $P_0$, $\left( (X_{\tau_{k}+\cdot}-X_{\tau_k})_{0\leq t\leq \tau_{k+1}-\tau_k}\ , \tau_{k+1}-\tau_k  \right)_{k\geq 1}$ are i.i.d. with the same law as $\left( (X_t)_{0\leq t\leq \tau_1}, \tau_1 \right)$ under $P_0(\cdot | H=+\8)$.
\end{Proposition}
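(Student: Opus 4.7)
The plan is to prove Proposition \ref{prop:ccl} by induction on the number of renewal times, using Proposition \ref{renew1} as the one-step version of the statement. The construction of $\tau_{k+1}$ is entirely analogous to that of $\tau_1$: by definition
\begin{equation*}
\tau_{k+1} \; = \; \tau_k + \tau_1\big( (X_{n+\tau_k}-X_{\tau_k})_{n\ge 0}, (\xi(t+\tau_k,x))_{t\ge 0, x\in\Z}\big),
\end{equation*}
so the $(k+1)$-th block, namely $\big((X_{\tau_k+\cdot}-X_{\tau_k})_{0\le t\le \tau_{k+1}-\tau_k},\tau_{k+1}-\tau_k\big)$, is obtained by applying exactly the same measurable functional to the shifted process $(\theta_{\tau_k}(X,\xi))$ as the one that produces the first block from $(X,\xi)$. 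Thus it suffices to show that, under $P_0$, the shifted process after $\tau_k$ is independent of $\mathcal{G}_k:=\sigma\big((X_{n\wedge \tau_k})_{n\ge 0},(\xi(s\wedge \tau_k,x))_{s\ge 0,x\in\Z}\big)$ and has the same law as $(X,\xi)$ under $P_0(\cdot\mid H=+\infty)$.

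For $k=1$ this is precisely the content of Proposition \ref{renew1}. For the inductive step, assume the result holds for $k$. To pass from $k$ to $k+1$, I would apply Proposition \ref{renew1} to the shifted process. More concretely, using the inductive hypothesis, the conditional law of $\theta_{\tau_k}(X,\xi)$ given $\mathcal{G}_k$ is $P_0(\cdot\mid H=+\infty)$; then, since $\tau_{k+1}-\tau_k$ equals $\tau_1$ computed on this shifted process, Proposition \ref{renew1} (which applies verbatim to $P_0(\cdot\mid H=+\infty)$ because the proof there only used the initial distribution invariance and the independence structure of the graphical construction) yields that $\theta_{\tau_{k+1}}(X,\xi)$ is independent of the $\sigma$-algebra generated by the shifted process up to time $\tau_1$, with law $P_0(\cdot\mid H=+\infty)$. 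Combining this with the inductive hypothesis, the $(k+1)$-th block is independent of $\mathcal{G}_k$ and of the $k$-th block, and has the required distribution.

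To make this last step rigorous and to conclude the full i.i.d. statement for all blocks simultaneously, I would proceed by checking, for any bounded measurable functionals $\Phi_1,\ldots,\Phi_{k+1}$ depending respectively on the first $k+1$ blocks, the identity
\begin{equation*}
E_0\!\left(\prod_{j=1}^{k+1}\Phi_j\right) \; = \; \prod_{j=1}^{k+1} E_0\big(\Phi_j(X^{\tau_1},\tau_1)\,\big|\,H=+\infty\big),
\end{equation*}
by conditioning successively at $\tau_k,\tau_{k-1},\dots,\tau_1$ and applying at each stage the identity \eqref{goal} established in the proof of Proposition \ref{renew1}. The measurability arguments used there—namely the decomposition of events into parts measurable with respect to updates $U(s,\cdot)$ with $s\le \tau_j$ versus $s\ge \tau_j$, and with respect to types $\nu(\mu)$ of visited versus unvisited particles—carry over without modification at each renewal time, because properties \eqref{re1}--\eqref{re5} ensure that the ``past'' information at time $\tau_j$ concerns a strictly disjoint set of updates and particles from the ``future''.

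The main obstacle is mostly bookkeeping rather than conceptual: one must keep careful track at each iteration of which Poisson updates and which particle types are revealed, verifying that the splitting between $\mathcal{G}_k$ and the shifted process truly corresponds to disjoint $\sigma$-algebras (which is exactly what the renewal conditions \eqref{re1}--\eqref{re5} were designed to ensure). Once this is set up, the i.i.d.\@ conclusion follows by a straightforward induction from the one-step decoupling Proposition \ref{renew1}, with no further probabilistic input.
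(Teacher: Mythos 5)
Your proposal is correct and takes essentially the same route as the paper, whose entire proof of Proposition~\ref{prop:ccl} reads ``The proof follows by induction from Proposition~\ref{renew1}.'' You have simply spelled out the induction, together with the observation that the decoupling argument of Proposition~\ref{renew1} carries over at each successive renewal time because the splitting into disjoint update and particle-type $\sigma$-algebras is preserved.
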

The proof follows by induction from Proposition \ref{renew1}.
\begin{Corollary}
Under $P_0$, $(X_{\tau_{k+1}}-X_{\tau_{k}},\tau_{k+1}-\tau_k)_{k\geq 1}$ are positive i.i.d. random variables with the same law as $(X_{\tau_1},\tau_1)$ under $P_0(\cdot | H=+\8)$. 
\end{Corollary}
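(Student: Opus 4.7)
The plan is to deduce the Corollary almost immediately from Proposition \ref{prop:ccl} by applying a natural continuous evaluation map. Concretely, I would define $\Phi$ on pairs $(\text{path},\text{length})$ by
\begin{equation*}
\Phi\bigl((y_s)_{0 \le s \le T},\,T\bigr) \; = \; (y_T,\, T),
\end{equation*}
i.e.\ read off the endpoint of the path together with its duration. Proposition \ref{prop:ccl} asserts that the sequence
\begin{equation*}
\Bigl( (X_{\tau_k+\cdot}-X_{\tau_k})_{0 \le t \le \tau_{k+1}-\tau_k},\; \tau_{k+1}-\tau_k \Bigr)_{k \ge 1}
\end{equation*}
is i.i.d.\ under $P_0$ with the same law as $\bigl((X_t)_{0\le t\le\tau_1},\tau_1\bigr)$ under $P_0(\,\cdot\,|\,H=+\infty)$. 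Pushing this joint law forward by $\Phi$ preserves both the i.i.d.\ property and the identification of laws, yielding exactly the statement that $(X_{\tau_{k+1}}-X_{\tau_k},\tau_{k+1}-\tau_k)_{k\ge 1}$ are i.i.d.\ under $P_0$ with the law of $(X_{\tau_1},\tau_1)$ under $P_0(\,\cdot\,|\,H=+\infty)$.

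It then remains to check strict positivity of both coordinates. Positivity of $\tau_{k+1}-\tau_k$ is immediate from the iterative construction of Section \ref{sub:Building renewal points}, where the $\tau_k$ form a strictly increasing sequence of finite stopping times. For the spatial increment, I would invoke the defining property \eqref{re4} of a renewal time applied at $\tau_k$: for every $n \ge \tau_k$,
\begin{equation*}
X_n \; \ge \; X_{\tau_k} + \frac{\mathsf v}{4}(n-\tau_k).
\end{equation*}
Evaluating at $n=\tau_{k+1}>\tau_k$ gives $X_{\tau_{k+1}}-X_{\tau_k} \ge \frac{\mathsf v}{4}(\tau_{k+1}-\tau_k)>0$, as required.

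I do not foresee a genuine obstacle: the Corollary is a straightforward repackaging of the joint statement of Proposition \ref{prop:ccl} into its finite-dimensional marginal indexed by endpoints, combined with a geometric observation about the cone condition that defines a renewal time. The only mild point to keep in mind is to apply $\Phi$ to the pair \emph{jointly} (rather than taking the two marginals separately), so that the correct joint distribution of displacement and duration is transported.
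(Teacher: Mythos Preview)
Your proposal is correct and matches the paper's approach: the paper gives no separate proof of the Corollary, treating it as an immediate consequence of Proposition \ref{prop:ccl}, which is exactly what you do by pushing forward through the endpoint map $\Phi$. Your additional verification of strict positivity via the cone condition \eqref{re4} is a welcome clarification that the paper leaves implicit.
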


%SUBSECTION
\subsection{Control on the moments of $\tau_1$ and conclusion of the proof}\label{renewal: last subsection}

\begin{Proposition}
\label{moment tau}
For $n\geq 0$ large enough,
\begin{equation*}
P_0(\tau_1>n)\leq \frac{1}{n^3}.
\end{equation*}
In particular, as $P_0(H=+\8)>0$, this implies
\begin{equation*}
E_0(\tau_1|H=+\8)<+\8 \quad \textrm{and} \quad E_0(\tau_1^2|H=+\8)<+\8.
\end{equation*}
\end{Proposition}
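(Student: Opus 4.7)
The plan is to decompose $\tau_1$ along the sequence of failed and successful renewal attempts, and to combine three bounds: a geometric tail on the number $K$ of attempts, a polynomial tail on the waiting times between consecutive candidates (from Proposition \ref{l:densite}), and a rapidly decaying tail on the time scale of a single failed attempt (from Proposition \ref{Proposition: Renormalization large gamma} together with random-walk estimates on the particles of the interchange process).

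First I would set $p_0 := P_0(H = +\infty) > 0$ (positive by \eqref{bascule}) and iterate the decoupling scheme \eqref{cle}--\eqref{fin}: at every candidate $R_k$, the event $\{H \circ \theta_{R_k} = +\infty\}$ has conditional probability at least $p_0$, so $P_0(K > k) = P_0(R_{k+1} < +\infty) \leq (1 - p_0)^k$. Choosing $k_0 := \lceil A \log n \rceil$ with $A$ large enough yields $P_0(K > k_0) \leq 1/(2n^3)$. On the complementary event $\{K \leq k_0\}$ I would write
\begin{equation*}
\tau_1 \; = \; R_K \; \leq \; R_1 \; + \; \sum_{k=1}^{k_0-1} (S_k - R_k) \; + \; \sum_{k=1}^{k_0-1} (R_{k+1} - S_k),
\end{equation*}
a sum of at most $2k_0 - 1 = O(\log n)$ positive finite terms; for $\tau_1 > n$ some term must exceed $m_n := n/(2k_0)$, so a union bound reduces the estimate to bounding each individual term by $O(1/(n^3 \log n))$.

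For the waiting-for-next-candidate terms $R_{k+1} - S_k$, an adaptation of the decoupling in Proposition \ref{renew1} to the intermediate stopping time $S_k$ shows that, conditionally on $\{S_k < +\infty\}$, this waiting time is stochastically dominated by $R_1$ under $P_0$; then Proposition \ref{l:densite} (applied with a sufficiently large exponent $q$) yields $P_0(R_1 > m) \leq P_0(\text{no candidate in } [0,m]) \leq 1/m^q$, which suffices. For the failed-attempt terms $S_k - R_k = H \circ \theta_{R_k}$, the same decoupling gives $P_0(S_k - R_k > m, \, S_k < +\infty) \leq P_0(m \leq H < +\infty)$; writing $H = D \wedge F$, I would bound
\begin{equation*}
P_0(m \leq D < +\infty) \; \leq \; \sum_{k \geq m} P_0\!\left( X_k < \tfrac{\mathsf v}{4} k \right) \; \leq \; \sum_{k \geq m} \ed^{-\phi_k^{1/4}}
\end{equation*}
via Proposition \ref{Proposition: Renormalization large gamma}, and $P_0(m \leq F < +\infty)$ by summing the Gaussian tails of a single interchange particle (initially at $-k$, $k \geq 1$) reaching height $\mathsf v n/4$ at some time $n \geq m$; both decay faster than any polynomial in $m$, which is ample.

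The hard part will be the decoupling at the intermediate stopping times $S_k$: at the genuine renewal time $\tau_1$ the argument of Proposition \ref{renew1} exploits that every visited particle lies strictly to the left of $X_{\tau_1}$ and that all particles to the right are ``fresh'', but at $S_k$ a failed candidate has introduced extra information about clocks and types inside $T^-_{S_k, X_{S_k}}$. I would therefore reprove a version of \eqref{cle}--\eqref{fin} with $\tau_1$ replaced by $S_k$, carefully tracking which clocks $U(s,y)$ and which types $\nu(\mu)$ are $\mathcal F_{S_k}$-measurable on $\{S_k < +\infty\}$, and using that $R_k$ itself is a candidate to recover the independence needed to pass from the walker's past to its future.
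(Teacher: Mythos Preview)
Your overall architecture---geometric tail for $K$, decoupling at the candidate times $R_k$ to control $S_k-R_k$ via $P_0(m\le H<\infty)$, and then bounding $D$ and $F$ separately---matches the paper and is correct. The minor omission that applying Proposition~\ref{Proposition: Renormalization large gamma} under $P_0$ also requires the environment estimate \eqref{admissible points typical} is easy to repair.

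The gap is in the terms $R_{k+1}-S_k$. You propose to decouple at $S_k$ and deduce stochastic domination by $R_1$ under $P_0$, but the independence argument \eqref{cle}--\eqref{fin} hinges on the fact that at a \emph{candidate} $R_k$ one has $M(R_k)<X_{R_k}$, so the types $\nu(\mu)$ revealed by the past lie entirely in $V_{i,x}$ while the future depends only on $\nu(\mu)$ with $\mu\notin V_{i,x}$. At $S_k$ this fails: $S_k$ is precisely the moment the attempt breaks down, so visited particles need not lie to the left of $X_{S_k}$, and moreover the event ``$(n,X_n)$ is a candidate'' for $n\ge S_k$ still refers to the \emph{entire} trajectory $(m,X_m)_{m\le n}$ and hence to the types already revealed. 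There is no way to recover the product structure there, and you correctly flag this as the hard part---but the proposed fix of reproving \eqref{cle}--\eqref{fin} at $S_k$ will not go through.

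The paper sidesteps this completely. Instead of bounding the waiting times $R_{k+1}-S_k$ one by one, it observes that on the event
\[
\{\tau_1>n,\ K\le\ln^2 n,\ \forall\,1\le k<K:\ S_k-R_k\le\sqrt{n}\}
\]
every candidate in $[0,n]$ must fall in one of the intervals $[R_k,S_k)$ (since by definition there is no candidate in $[S_{k-1},R_k)$), and these intervals have total length at most $\sqrt{n}\,\ln^2 n$. Hence $\sharp\{i\le n:(i,X_i)\text{ candidate}\}\le\sqrt{n}\,\ln^2 n<cn$, and Proposition~\ref{l:densite} bounds the probability of this event directly on $[0,n]$---no decoupling at $S_k$ is needed at all. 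This counting trick is the missing idea in your approach.
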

\begin{proof}
First observe that for any $n\in\N$,
\begin{equation}
\label{dec1}
P_0(\tau>n)\leq P_0(\tau>n,K<\ln^2 n)+P_0(K\geq \ln^2 n),
\end{equation}
with $K$ defined in \eqref{eq:renouv}. As $K$ is distributed under $P_0$ like a geometric random variable with success parameter $P_0(H=+\8)>0$ ( see \eqref{bascule}), we obtain that for $n$ large enough
\begin{equation*}
P_0(K\geq \ln^2 n)\leq \frac{1}{n^4}.
\end{equation*}
In order to deal with the first term in the right had side of \eqref{dec1}, we decompose
\begin{multline}
\label{deux termes}
P_0(\tau>n,K<\ln^2 t)\leq P_0\left(\tau>n, \{\forall\ 1\leq k<K, S_k-R_k\leq \sqrt{n}\},K\leq \ln^2 n\right)\\+P_0\left(\tau>n,\{\exists\ 1 \leq k<K, S_k-R_k> \sqrt{n}\},K\leq \ln^2 n\right),
\end{multline}
and again we have to control two terms. For the first one, note that 
\begin{equation*}
P_0\left(\tau>n, \{\forall\ 1 \leq k<K, S_k-R_k\leq \sqrt{n}\},K\leq \ln^2 n\right)\leq P_0(\sharp\{i\leq n, \ (i,X_i) \textrm{ is a candidate }\} \leq \sqrt{n} \ln^2 n),
\end{equation*}
so that, using Proposition \ref{l:densite}, we obtain that for $n$ large enough,
\begin{equation*}
P_0\left(\tau>n, \{\forall\ 1 \leq k<K, S_k-R_k\leq \sqrt{n}\},K\leq \ln^2 n\right)\leq \frac{1}{n^4}.
\end{equation*}

For the second term in \eqref{deux termes}, we observe that
\begin{equation*}
P_0\left(\tau>n,\{\exists\ 1 \leq k<K, S_k-R_k> \sqrt{n}\},K\leq \ln^2 n\right)\leq \sum_{k=1}^{\ln^2 n}P_0\left( R_k<+\8, \sqrt{n}< S_k-R_k<+\8 \right).
\end{equation*}
We study each term of the sum in the same way. Fixing some integer $k$ such that $1\leq k\leq \ln^2 n$,
\begin{align*}
P_0(R_{k}&<+\8,\sqrt{n}<S_{k}-R_k<+\8)=\sum_{x\in\Z , i\geq 0}P_0(R_{k}=i,X_{R_{k}}=x,\sqrt{n}<S_{k}-R_k<+\8)\\
&=\sum_{x\in\Z , i\geq 0}\E_1\left( \E_2 \left( P^{\omega}_{0,0}(R_{k}=i,X_{R_k}=x) P^{\omega}_{i,x}(\sqrt{n}< H(\theta_i\xi,X)<+\8) \right)\right). \nonumber
\end{align*}

Given $\xi$ and $\nu$, for all $i\geq 0$ and $x\in\Z$,
\begin{equation}
\label{twoterms}
P^{\omega}_{i,x}(\sqrt{n}< H(\theta_i\xi,X)<+\8)\leq P^{\omega}_{i,x}(\sqrt{n}< D<+\8) 1_{\{F(\theta_i\xi,x)=+\8\}} + 1_{\{ \sqrt{n}< F(\theta_i\xi,x)<+\8\}}.
\end{equation}
We consider separately these two terms. 
%For any fixed $\xi$, let $V_{i,x}$ be the set of label of particles that are strictly behind $x$ at time $i$ that is
%\begin{equation*}
%V_{i,x}=\{\mu(i,y),\ y< x\}.
%\end{equation*}
For the first one, observe that given $\xi$, $P^{\omega}_{0,0}(R_{k}=i,X_{R_k}=x)$ is measurable with respect to $\sigma(\nu(\mu), \mu\in V_{i,x})$ while $P^{\omega}_{i,x}(\sqrt{n}< D<+\8)$ is measurable with respect to $\sigma(\nu(\mu), \mu \notin V_{i,x})$. These two variables are thus independent under $\P_2$:
\begin{multline*}
 \E_2\left( P^{\omega}_{0,0}(R_{k}=i,X_{R_k}=x)P^\omega_{i,x}(\sqrt{n}< D<+\8)\right)\\=\E_2 \left( P^{\omega}_{0,0}(R_{k}=i,X_{R_k}=x)\right)\E_2\left(P^{\omega}_{i,x}(\sqrt{n}< D<+\8 )\right).
\end{multline*}
Observe that $\E_2  \left( P^{\omega}_{0,0}(R_{k}=i,X_{R_k}=x)\right)$ is measurable with respect to $\sigma(U(s,y), s\leq i, y\in\Z)$ and $\E_2\left(P^{\omega}_{i,x}(\sqrt{n}< D<+\8)\right) 1_{\{F(\theta_i\xi,x)=+\8\}}$ is measurable with respect to $\sigma(U(s,y), s\geq i, y\in\Z)$. These two variables are thus independent under $\P_1$ and
\begin{multline*}
\sum_{x\in\Z , i\geq 0}\E_1\left( \E_2 \left( P^{\omega}_{0,0}(R_{k}=i,X_{R_k}=x)\right)\ \E_2\left(P^{\omega}_{i,x}(\sqrt{n}< D<+\8)\right) 1_{\{F(\theta_i\xi,x)=+\8\}} \right)\\
=\sum_{x\in\Z , i\geq 0} P_0(R_{k}=i,X_{R_k}=x) P_{i,x}(\sqrt{n}< D<+\8, F(\theta_i\xi,x)=+\8).
\end{multline*}
As the law of environment is invariant by space-time translation, for all $x\in\Z$ and $i\geq 0$,
\begin{equation*}
 P_{i,x}(\sqrt{n}< D<+\8, F(\theta_i\xi,x)=+\8)=P_{0}(\sqrt{n}< D<+\8, F=+\8),
\end{equation*}
so that
\begin{multline*}
\sum_{x\in\Z , i\geq 0}\P \left( P^{\omega}_{0,0}(R_{k}=i,X_{R_k}=x) P^{\omega}_{i,x}(\sqrt{n}< H(\theta_i\xi,X)<+\8) \right)\\=P_0(R_{k}<+\8)P_0(\sqrt{n}< D<+\8, F=+\8).
\end{multline*}
Using Proposition \ref{Proposition: Renormalization large gamma} and \eqref{admissible points typical}, we obtain that, for $n$ large enough,
\begin{equation*}
P_0(\sqrt{n}< D<+\8, F=+\8) \;\le\; P_0(\exists k\geq \sqrt{n}, X_k\leq X_0+\frac{\mathsf{v}}{4}k)
\; \le \; \sum_{k=\sqrt{n}}^{+\8}C e^{-k^{\alpha}}+e^{-\phi_k^{1/4}} \;\le\; \frac{1}{n^4}.
\end{equation*}
We turn to the second term in \eqref{twoterms}.  As $1_{\{ \sqrt{n}< F(\theta_i\xi,x)<+\8\}}$ is measurable with respect to $\sigma(U(s,y), s\geq i, y\in\Z)$  and $ \E_2 \left( P^{\omega}_{0,0}(R_{k}=i,X_{R_k}=x) \right)$ is measurable with respect to $\sigma(U(s,y), s\leq i, y\in\Z)$, these two variables are independent. Thus, using also the invariance of $P_0$ under space-time translations,
\begin{equation*}
\sum_{x\in\Z , i\geq 0}\E_1 \left(  1_{\{ \sqrt{n}< F(\theta_i\xi,x)<+\8\}} \E_2 \left( P^{\omega}_{0,0}(R_{k}=i,X_{R_k}=x) \right) \right) = P_0(R_k<\8)P_0(\sqrt{n}< F<+\8)
\end{equation*}
Using the same type of computations as for \eqref{eq:cone}, we obtain that, for $n$ large enough,
\begin{equation*}
P_0(\sqrt{n}< F<+\8)\leq \frac{1}{n^4}.
\end{equation*}
That concludes the proof.
\end{proof}
We are now ready to conclude the proof of Theorem \ref{Theorem: Drift to the right}. 
For random walks in static random environments, it is well-known that the existence of a finite second moment for $\tau_2-\tau_1$ implies a law of large numbers and an annealed central limit theorem (see \cite{SZ} and \cite{SZN}).
%The proofs that a finite second moment for $\tau_2-\tau_1$ implies a law of large numbers and an annealed central limit theorem are classical in the context of random walk in static random environment,
The proof carries over to our case, and we recall it here for sake of completeness. % we recall here the steps of these proofs.

We start with the law of large numbers, i.e.\@ point $1$ of Theorem \ref{Theorem: Drift to the right}. 
For $n\geq 0$, $k(n)$ denotes the label of the "renewal slab" that contains $n$, i.e.\@ the unique integer such that $\tau_{k(n)} \leq n < \tau_{k(n)+1}$. We can thus control the walker via
\begin{equation}
\label{encadre}
 \frac{X_{\tau_{k(n)}}}{ \tau_{k(n)+1}}  \leq \frac{X_n}{n} \leq \frac{X_{\tau_{k(n)+1}}}{ \tau_{k(n)}}.
\end{equation}
Rewrite the right term as
\begin{equation}
\label{prodlgn}
 \frac{X_{\tau_{k(n)+1}}}{ \tau_{k(n)}} =  \frac{X_{\tau_1}+\sum_{i=2}^{k(n)+1}\Delta_{i}}{ k(n)}  \frac{k(n)}{ \tau_1+\sum_{i=2}^{k(n)}(\tau_{i+1}-\tau_i)},
\end{equation}
so that, using Proposition \ref{prop:ccl} and the law of large numbers, it is seen to converge $P_0$-a.s.\@ to 
\begin{equation}
\label{defV}
v(\gamma)=\frac{E_0(X_{\tau_1}|H=+\8)}{E_0(\tau_1|H=+\8)},
\end{equation}
which is always well defined and positive thanks to \eqref{bascule} and Proposition \ref{moment tau}. 
Using the same decomposition as in \eqref{prodlgn} to study the left term in \eqref{encadre}, we obtain the law of large numbers stated in point $1$ of Theorem \ref{Theorem: Drift to the right}.

We turn to the proof of point $2$ of Theorem \ref{Theorem: Drift to the right}, mainly following \cite{SZN}. For $j\geq 1$, define
\begin{equation*}
Z_j =X_{\tau_{j+1}}-X_{\tau_{j}} -(\tau_{j+1}-\tau_j) v(\gamma).
\end{equation*}
These variables are i.i.d (Proposition \ref{renew1}), centered, \eqref{defV} and admit a finite second moment (Proposition \ref{moment tau}). It thus follows from Donsker's theorem that 
\begin{equation}
\label{Prems}
%(\Sigma^n_t)_{t\geq 0} =
 \left( \frac{\sum_{i=1}^{\lfloor nt \rfloor} Z_j}{\sqrt{n}} \right)_{t\geq 0}
\end{equation}
converges in law to a Brownian motion  with variance $E_0(Z_1^2)$, that is positive as $Z_1$ is not $P_0-$a.s.\@ constant.
As a consequence of Proposition \ref{prop:ccl}, the law of large numbers and Dini's theorem,
\begin{equation*}
P_0-a.s.,\quad \forall T>0,\quad \sup_{0\leq t \leq T} \left| \frac{k(\lfloor tn \rfloor)}{n}-\frac{t}{E_0(\tau_2-\tau_1)}\right|.
\end{equation*}
Therefore, we deduce from \eqref{Prems} that
\begin{equation}
\label{Deuze}
%(\Sigma^n_t)_{t\geq 0} =
 \left( \frac{\sum_{i=1}^{k(\lfloor nt \rfloor)} Z_j} {\sqrt{n}} \right)_{t\geq 0}
\end{equation}
converges in law to a Brownian motion  with variance $E_0\left( (Z_1)^2 \right)/E_0(\tau_2 -\tau_1)$.
Finally, observe that $P_0-$a.s., for all $T>0$,
\begin{equation*}
\sup_{0\leq t \leq T} \left| \frac{X_{nt}-ntv(\gamma)}{\sqrt{n}}- \frac{\sum_{j=1}^{k(\lfloor nt \rfloor)} Z_j} {\sqrt{n}}\right| 
\leq (v(\gamma)+1)\max_{0\leq k\leq \lfloor nT \rfloor}\frac{\tau_{k+1}-\tau_k}{\sqrt{n}},
\end{equation*}
with the convention $\tau_0=0$. Let us prove that the right hand side converges to $0$ as $n\to \infty$ in probability, using Proposition \ref{prop:ccl} and Proposition \ref{moment tau}. Indeed, for any $\epsilon>0$,
\begin{align*}
P_0\left( \max_{0\leq k\leq \lfloor nT \rfloor} \tau_{k+1}-\tau_k \geq \epsilon \sqrt{n}\right)&\leq P_0\left( \tau_{1}\geq \epsilon \sqrt{n} \right)+ (nT+1) P_0\left( \tau_{2}-\tau_1 \geq \epsilon \sqrt{n} \right)\\
& \leq P_0\left( \tau_{1}\geq \epsilon \sqrt{n} \right)+ \frac{nT+1}{\epsilon^3 n^{3/2}} 
\end{align*}
and the right hand side converges to $0$ when $n$ goes to $+\8$. 
We have thus proven that the Skorohod distance between  $\left( \frac{X_{nt}-ntv(\gamma)}{\sqrt{n}} \right)_{t\geq 0}$ and $\left( \frac{\sum_{i=1}^{k(\lfloor nt \rfloor)} Z_j} {\sqrt{n}} \right)_{t\geq 0}$ goes to $0$ in $P_0-$probability. 
We deduce from the convergence in law of the latter that 
\begin{equation*}
%\label{troize}
%(\Sigma^n_t)_{t\geq 0} =
 \left( \frac{X_{nt}-ntv(\gamma)}{\sqrt{n}} \right)_{t\geq 0}
\end{equation*}
converges in law to a Brownian motion  with variance $\sigma^2:=E_0\left( (X_{\tau_2}-X_{\tau_1})^2 \right)/E_0(\tau_2 -\tau_1)$.

% BIBLIOGRAPHY

% REFERENCES AUTHORS

\vspace{0.5cm}

\textsc{
Fran\c cois Huveneers,
CEREMADE  - UMR CNRS 7534 - 
Universit\' e de Paris-Dauphine,
Place du Mar\' echal De Lattre De Tassigny,
75775 PARIS CEDEX 16, FRANCE.}\\
\textsc{Email:}~\texttt{huveneers@ceremade.dauphine.fr}

\vspace{0.5cm}

\textsc{
Fran\c cois Simenhaus,
CEREMADE  - UMR CNRS 7534 - 
Universit\' e de Paris-Dauphine,
Place du Mar\' echal De Lattre De Tassigny,
75775 PARIS CEDEX 16, FRANCE.}\\
\textsc{Email:}~\texttt{simenhaus@ceremade.dauphine.fr}

\end{document}